\documentclass[10pt]{article}
\usepackage{geometry}
\usepackage{mythmsub}
\usepackage{tikz-cd}
\usepackage[shortlabels]{enumitem}
\usepackage{titlesec}
\numberwithin{equation}{thm}
\setlist{leftmargin=1.2cm, topsep=0.5em, itemsep=0em}
\begin{document}
\title{Equivariant cd-structures and descent theory}
\author{Doosung Park}
\date{}
\maketitle
\newcommand{\cps}{{{\rm C}({\rm PSh}(\mathscr{S},{\bf Q}))}}
\newcommand{\cs}{{{\rm C}({\rm Sh}(\mathscr{S},{\bf Q}))}}
\newcommand{\dps}{{{\rm D}({\rm PSh}(\mathscr{S},{\bf Q}))}}
\newcommand{\ds}{{{\rm D}({\rm Sh}(\mathscr{S},{\bf Q}))}}
\begin{abstract}
  We construct the equivariant version of cd-structures, and we develop descent theory for topologies comes from equivariant cd-structures. In particular, we reprove several results of Cisinski-D\'eglies on the \'etale descent, qfh-descent, and h-descent. Since the \'etale topos, qfh-topos, and h-topos do not come from usual cd-structures, such results cannot be produced by usual cd-structures. We also apply equivariant cd-structures to study several topologies on the category of noetherian fs log schemes.
\end{abstract}
\titlelabel{\thetitle.\;\;}
\titleformat*{\section}{\center \bf}
\section{Introduction}
\begin{none}\label{0.1}
  In \cite{Voe10a} and \cite{Voe10b}, analogous results of the Brown-Gersten theorem (\cite{BG73}) for the Nisnevich
  topology and cdh-topology are studied by introducing cd-structures. For instance, if we take $P$ as the
  collection of Nisnevich distinguished squares, then we recover the Nisnevich cd-structure. In  \cite[\S 3.3]{CD12}, it is applied to study descents in triangulated categories of motives over schemes.

  However, there are topoi like the \'etale topos, qfh-topos, and h-topos that cannot be obtained by any
  cd-strucutres. In [loc.\ cit], descent theory for the \'etale topology, qfh-topology, and h-topology is discussed with
  equivarient versions of distinguished squares but without cd-structures. The reason why we introduce
  equivariant cd-structures here is to study descent theory for such a topology more systematically. Here is the definition.
\end{none}
\begin{df}\label{0.2}
    An {\it equivariant
  cd-structure} (or ecd-structure for abbreviation) $P$ on a small category $\mathscr{S}$ with an initial object is a collection of pairs $(G,C)$ where $G$
  is a group and $C$ is a commutative diagram
  \begin{equation}\label{5.3.2}\begin{tikzcd}
    X'\arrow[r,"g'"]\arrow[d,"f'"]&X\arrow[d,"f"]\\
    S'\arrow[r,"g"]&S
  \end{tikzcd}\end{equation}
  in $\mathscr{S}$ with $G$-actions on $X$ over $S$ and on $X'$ over $S'$ such that
  \begin{enumerate}[(i)]
    \item $g'$ is $G$-equivariant over $g$,
    \item if $(G,C)\cong (G',C')$, then $(G,C)\in P$ if and only if $(G',C')\in P$.
  \end{enumerate}
  For a pair $(G,C)\in P$, $C$ is called a $P$-distinguished square of group $G$. The $t_P$-topology is the
  topology generated by the $t_\emptyset$-topology (see (\ref{5.1}) for the definition) and the families of morphisms of the form
  \begin{equation}\label{5.3.1}
    \{f,g\}
  \end{equation}
  for $(G,C)\in P$. If $G$ is trivial for any element of $P$, then $P$ is a usual cd-structure defined in
  \cite[2.1]{Voe10a}.
\end{df}
\begin{none}\label{0.3}
  We can define bounded, complete, and regular ecd-structures as in \cite{Voe10a}. Then we obtain the following two theorems generalizing several results in \cite{Voe10a} and \cite[\S 3]{CD12}.
\end{none}
\begin{thm}\label{0.4}
  {\rm (}See {\rm (\ref{2.14}))} Let $P$ be a complete and regular ecd-structure bounded with respect to a density structure $D$ {\rm (}see {\rm (\ref{5.7})} for the definition{\rm )}, and let $F$ be a $t_P$-sheaf of ${\bf Q}$-modules on a small category $\mathscr{S}$ with an initial object. Then
  \[H_{t_P}^n(S,F)=0\]
  for any $S\in {\rm ob}\,\mathscr{S}$ and $n>{\rm dim}_DS$.
\end{thm}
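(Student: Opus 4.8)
The plan is to reduce the statement to an equivariant Mayer--Vietoris long exact sequence and then to induct on $\dim_D S$ using boundedness. First I would choose a $t_P$-fibrant (injective) resolution $F\to\mathcal{I}$ in $\cs$, so that $H_{t_P}^n(S,F)=H^n(\mathcal{I}(S))$ for every $S$. By completeness of $P$, together with the descent characterization of $t_P$-local objects set up earlier in the section, such an $\mathcal{I}$ is precisely an object satisfying equivariant descent: for every $(G,C)\in P$ the square obtained by applying $\mathcal{I}$ to (\ref{5.3.2}) and passing to $G$-fixed points is homotopy cartesian. This is exactly where the ${\bf Q}$-coefficients become essential: for the groups $G$ occurring in $P$ (finite in the cases of interest, so that $|G|$ is invertible in ${\bf Q}$), the transfer/averaging argument shows that the homotopy fixed points $(-)^{hG}$ agree with the strict invariants $(-)^{G}$ and that $(-)^{G}$ is exact on complexes of ${\bf Q}$-modules. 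Hence the homotopy cartesian square linearizes into an honest long exact sequence
\[
  \cdots\to H_{t_P}^n(S,F)\to H_{t_P}^n(S',F)\oplus H_{t_P}^n(X,F)^{G}\to H_{t_P}^n(X',F)^{G}\to H_{t_P}^{n+1}(S,F)\to\cdots,
\]
where $X'=S'\times_S X$ carries the induced $G$-action and the last map is the difference of restrictions.

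With this in hand I would prove the vanishing by induction on $d=\dim_D S$. The density structure $D$ provides, in top dimension, morphisms into $S$ whose complements have strictly smaller $D$-dimension, and boundedness (\ref{5.7}) is precisely the hypothesis upgrading such a morphism to a $P$-distinguished square of some group $G$ whose remaining vertices are controlled by $D$: the ``closed'' vertices $S'$ and $X'$ drop in dimension, while the ``open'' vertex $X$ is strictly reduced within its $D$-filtration. Feeding this square into the Mayer--Vietoris sequence, the inductive hypothesis annihilates $H_{t_P}^n(S',F)$, $H_{t_P}^n(X,F)^{G}$ and $H_{t_P}^{n-1}(X',F)^{G}$ for $n>d$, which forces $H_{t_P}^n(S,F)=0$. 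Organizing the individual corrections as a tower of Mayer--Vietoris replacements, one sees that evaluating a fibrant model at $S$ stabilizes after at most $\dim_D S$ steps, which is the cohomological-dimension statement.

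The main obstacle is the bookkeeping of this induction, for two linked reasons. First, a $P$-distinguished square does not lower the dimension of every vertex --- the open vertex $X$ retains the dimension of $S$ --- so the primary induction on $d$ must be wrapped inside a secondary, well-founded induction along the layers $D_0\supseteq D_1\supseteq\cdots$ of the density structure; verifying that this secondary induction terminates after finitely many reductions, as guaranteed by boundedness together with the well-foundedness implicit in a density structure, is the technical heart of the argument and the point where Voevodsky's non-equivariant scheme must be adapted. Second, one must check that the $G$-actions and the associated fixed-point functors remain compatible under each refinement, so that the exactness of $(-)^{G}$ over ${\bf Q}$ can be invoked uniformly throughout the tower. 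Once these two points are settled, the vanishing for $n>\dim_D S$ drops out of the long exact sequence displayed above.
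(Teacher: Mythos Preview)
Your setup of the equivariant Mayer--Vietoris sequence is correct and matches the paper's Proposition~\ref{2.13}: regularity plus exactness of $(-)^G$ over ${\bf Q}$ gives exactly that long exact sequence. The gap is in the induction. You claim that ``boundedness is precisely the hypothesis upgrading such a morphism to a $P$-distinguished square \ldots\ whose remaining vertices are controlled by $D$: the `closed' vertices $S'$ and $X'$ drop in dimension''. This is not what boundedness says. The reducing condition in (\ref{5.7}) does \emph{not} manufacture, for a given $S$, a distinguished square with lower-dimensional corners; it only says that an \emph{already given} square can be refined so that its lower-right vertex lands in a prescribed $D_{i+1}(S)$, once density-neighborhoods of the other three vertices are specified. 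In typical examples (a closed square $S=S'\cup X$ with both pieces of full dimension, or a Nisnevich square with $X\to S$ \'etale) none of $S',X,X'$ has smaller ${\rm dim}_D$ than $S$, so an induction on ${\rm dim}_DS$ cannot feed the Mayer--Vietoris inputs back into the inductive hypothesis. Your proposed secondary induction ``along the layers $D_0\supseteq D_1\supseteq\cdots$'' is too vague to repair this.

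The paper's proof (Theorem~\ref{2.10}) runs the induction on the cohomological degree $n$, and the statement proved is not vanishing but the auxiliary assertion $(A_n)$: every class $a\in H_{t_P}^n(S,F)$ dies after restriction along some morphism in $D_n(S)$. The argument starts from a fact your sketch omits, namely that the cohomology presheaves are $t_P$-locally zero (condition (iv) in (\ref{2.9})), so $a$ dies on some $t_P$-cover; completeness reduces to a simple cover; and then the reducing property is applied \emph{twice} --- once to bring the square over a $D_n$-neighborhood of $S$, and once more after invoking $(A_{n-1})$ on the boundary class in $H^{n-1}(X',F)^G$ --- to convert ``dies on a cover'' into ``dies on a single $D_n$-morphism''. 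Only at the end does dimension enter: when $n>{\rm dim}_DS$ every morphism in $D_n(S)$ is an isomorphism, so $(A_n)$ forces $a=0$. The interplay of the two applications of reducing with the degree-shift in Mayer--Vietoris is the mechanism you are missing.
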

\begin{thm}\label{0.5}
  {\rm (}See {\rm (\ref{3.5}))} Let $\mathscr{S}$ be a small category with an initial object, let $\mathscr{S}^{\rm dia}$ denote the $2$-category of functors from small categories to $\mathscr{S}$, let $\mathscr{T}:\mathscr{S}^{\rm dia}\rightarrow {\rm Tri}$ be a contravariant pseudofunctor satisfying the conditions {\rm
  (i)--(iv)} in {\rm (\ref{3.1})}, let $K$ be an object of $\mathscr{T}(T)$ where $T\in {\rm ob}\,\mathscr{S}$, and
  let $P$ be a bounded, complete, and regular ecd-structure on $\mathscr{S}$. Then the following are equivalent.
  \begin{enumerate}[{\rm (i)}]
    \item For any distinguished square in $\mathscr{S}/T$ of group $G$ of the form {\rm (\ref{5.3.2})}, the
        commutative diagram
    \[\begin{tikzcd}
      p_*p^*K\arrow[r,"ad"]\arrow[d,"ad"]&p_*g_*g^*p^*K\arrow[d,"ad"]\\
      (p_*f_*f^*p^*K)^G\arrow[r,"ad"]&(p_*h_*h^*p^*K)^G
    \end{tikzcd}\]
    in $\mathscr{T}(T)$ is homotopy Cartesian where $p:S\rightarrow T$ is the structural morphism and $h=fg'$.
    \item $K$ satisfies $t_P$-descent.
  \end{enumerate}
\end{thm}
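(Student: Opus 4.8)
The plan is to treat the two implications separately; (ii)$\Rightarrow$(i) is formal, while (i)$\Rightarrow$(ii) is where the ecd-structure hypotheses do their work. For (ii)$\Rightarrow$(i), I would observe that the family $\{f,g\}$ of (\ref{5.3.1}) attached to the distinguished square (\ref{5.3.2}) is by construction a $t_P$-cover of $S$. Pulling it back along $p$ and applying $t_P$-descent for $K$, I express $p_*p^*K$ as the homotopy limit of the \v{C}ech-type diagram generated by $f$ and $g$; conditions (i)--(iv) of (\ref{3.1}) are precisely what make $p_*$, $f_*$, $g_*$ and the units $ad$ interact compatibly inside this diagram. Because $X$ carries a $G$-action over $S$ (and $X'$ over $S'$) and we work with $\mathbf{Q}$-coefficients, the invariants functor $(-)^G$ is exact and commutes with the homotopy limits in play, so extracting the $G$-fixed part collapses the \v{C}ech diagram to exactly the square displayed in (i), which comparison then shows is homotopy Cartesian.

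The substance lies in (i)$\Rightarrow$(ii), which I would prove along the Brown--Gersten--Voevodsky lines, bootstrapping descent for all of $t_P$ out of the single-square hypothesis. The first reduction uses completeness: every $t_P$-covering sieve contains a \emph{simple} one, built from the trivial covers of the $t_\emptyset$-topology (\ref{5.1}) by finitely many distinguished squares. It therefore suffices to verify descent along simple covers, which I would do by induction on the number of squares used to assemble the cover. The inductive step is the assertion that gluing one further distinguished square onto a diagram along which $K$ already enjoys descent again yields descent; regularity is exactly what guarantees that the pullbacks of a distinguished square arising in this step remain distinguished (up to refinement), so that hypothesis (i) applies at each stage and the iterated squares fit together into homotopy Cartesian cubes.

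To make the induction terminate, and to descend from hypercovers to the iterated \v{C}ech data handled above, I would bring in boundedness with respect to the density structure $D$ (\ref{5.7}). The density structure stratifies objects by $\dim_D$, and the vanishing of Theorem (\ref{0.4}) truncates the relevant hypercovers at a finite stage, so that only finitely many square-gluings are needed over any fixed $S$; this is the same dimension bookkeeping that proves (\ref{0.4}), and I expect to reuse it to annihilate the higher obstructions standing between the one-square condition and full $t_P$-descent.

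The principal obstacle, and the genuinely new point relative to \cite{Voe10a} and \cite{CD12}, is propagating the group actions coherently through all of these reductions. Each simple cover carries a tower of groups inherited from the squares used to build it, and I must check that the associated invariants functors $(-)^G$ assemble compatibly along this tower, so that the \emph{equivariant} homotopy Cartesian condition of (i) feeds the induction in the same way the plain condition does in the classical case. Here $\mathbf{Q}$-linearity is indispensable: since $(-)^G$ is exact and averaging over $G$ exhibits the fixed points as a retract of the whole, the fixed-point summands separate off level by level and strict invariants agree with homotopy fixed points. I therefore expect the combinatorics of these group towers, rather than any new homotopical phenomenon, to be the crux of the argument.
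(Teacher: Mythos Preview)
Your proposal diverges from the paper's argument in a way that leaves a real gap.

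The paper does not carry out a Brown--Gersten induction inside $\mathscr{T}$. Instead it uses the functor $\Phi_E:\mathscr{T}(T)\to {\rm D}({\rm PSh}(\mathscr{S}/T,{\bf Q}))$ built in (\ref{3.2}) from $R{\rm Hom}(E,-)$ to transfer the entire question to complexes of presheaves, where Theorem~(\ref{2.3}) already gives the equivalence between the homotopy Cartesian squares and $t_P$-locality. The proof of (\ref{3.5}) is then a chain of tautological equivalences: condition~(i) holds iff for every $E$ the square $\Phi_E(K)(S)\to\cdots$ is homotopy Cartesian (by (\ref{3.2}) and (\ref{3.4})), iff each $\Phi_E(K)$ is $t_P$-local (by (\ref{2.3})), iff the hypercover comparison maps are isomorphisms after $R{\rm Hom}(E,-)$ (by (\ref{2.17}) and (\ref{3.2})), iff $K$ satisfies $t_P$-descent. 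All of the Voevodsky-style induction, the use of boundedness, and the passage to hypercovers happen once and for all inside the proof of (\ref{2.3}) and (\ref{2.4}) at the presheaf level.

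Your direct approach has two concrete problems. First, you have misplaced the role of regularity: it is not a pullback-stability condition (that is closer to the completeness criterion (\ref{1.5})), but rather the epimorphism hypothesis (\ref{5.4.1}) that produces the short exact sequence of (\ref{1.3}) and hence the long exact sequence of (\ref{2.13}); without that, the single-square condition does not feed an induction in the way you describe. Second, and more seriously, you do not explain how to pass from descent along simple covers to descent along arbitrary $t_P$-hypercovers, which is what (ii) demands (Definition~(\ref{3.3})). In the paper this step is absorbed into the proof of (\ref{2.3}) via the cone argument and the presheaves $T^n=H^n(K'(-))$ of (\ref{2.4}); to run that argument you need objects to which you can apply sheafification and test vanishing, i.e.\ actual presheaves. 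Carrying it out directly in $\mathscr{T}$ would force you to reinvent $\Phi_E$ anyway. The ``group tower'' issue you flag is, by contrast, not the crux: once one works through $\Phi_E$, the equivariance is handled levelwise by (\ref{3.4}) and the projector formula (\ref{5.2.1}), and no tower of groups ever appears.
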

\begin{rmk}\label{0.6}
  Note that we only work for coefficients that are ${\bf Q}$-algebra. The ${\bf Z}$-coefficient is not suitable for the above theorems.
\end{rmk}
\begin{none}\label{0.7}
  We show that that the \'etale topos, qfh-topos, and h-topos come from ecd-structures. This reproves several results in \cite{Voe96} and \cite[\S 3]{CD12} more systematically. The eh-topos also comes from an ecd-structure, so we can apply the above theorems to the eh-topos also, which seems to be new. We also construct several ecd-structures on the category of noetherian fs log schemes, and define strict closed topology, dividing topology, pw-topology, and qw-topology. Then we prove the following theorem.
\end{none}
\begin{thm}\label{0.8}
  {\rm (}See {\rm (\ref{6.14}))} Let $\mathscr{S}$ be a category of finite dimensional noetherian fs log schemes, let $\mathscr{T}:\mathscr{S}^{\rm dia}\rightarrow {\rm Tri}$ be a contravariant pseudofunctor satisfying the conditions {\rm
  (i)--(iv)} in {\rm (\ref{3.1})}, and let $K$ be an object of $\mathscr{T}(T)$ satisfying the dividing descent and strict closed descent where $T$ is an object of
  $\mathscr{S}$. Then $K$ satisfies the pw-descent if and only
  if $K$ satisfies the qw-descent.
\end{thm}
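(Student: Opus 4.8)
The plan is to reduce both descent statements to the homotopy Cartesian criterion of Theorem (\ref{0.5}), and then to compare the pw- and qw-distinguished squares directly, using the two assumed descent conditions as a bridge. Since the pw-, qw-, dividing, and strict closed topologies all arise from bounded, complete, and regular ecd-structures on $\mathscr{S}$ (finite dimensionality guaranteeing boundedness with respect to the relevant density structure), Theorem (\ref{0.5}) applies to each: $K$ satisfies the associated descent exactly when the square in condition (i) of (\ref{0.5}) is homotopy Cartesian for every distinguished square of the corresponding ecd-structure. By hypothesis this already holds for all dividing squares and all strict closed squares, so the real content of the theorem is the equivalence, for $K$, of the homotopy Cartesian condition on pw-squares with that on qw-squares.

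First I would compare the distinguished squares themselves. Both topologies refine the dividing and strict closed topologies and differ only in the class of surjective covers they adjoin---proper-type covers for pw and finite-type covers for qw. The key geometric input is a refinement lemma: after a dividing cover (a log blow-up) and a strict closed cover, every pw-distinguished square is dominated by qw-distinguished squares, and conversely. This is the logarithmic counterpart of flattening a proper surjection to a finite surjection by blowing up, and I would establish it from the structure theory of noetherian fs log schemes together with a platification-by-blow-up argument on the underlying schemes.

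Next I would transport the homotopy Cartesian property along this refinement. Granting dividing and strict closed descent, the squares attached to dividing and strict closed distinguished squares are already homotopy Cartesian; hence replacing a pw-square by its qw-refinement, or vice versa, does not change whether the attached square is homotopy Cartesian, by the composition and cancellation laws for homotopy Cartesian squares in $\mathscr{T}(T)$ that conditions (i)--(iv) of (\ref{3.1}) make available. This yields the desired equivalence.

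I expect the refinement lemma to be the main obstacle: producing a dominating qw-cover from a given pw-cover within the category of fs log schemes, and tracking the induced comparison of distinguished squares compatibly with the pseudofunctor $\mathscr{T}$. It is also here that the restriction to $\mathbf{Q}$-algebra coefficients of Remark (\ref{0.6}) becomes essential, since the passage between a proper pw-cover and a finite qw-cover is governed by the trace of a finite surjective morphism, whose composite with the pullback is multiplication by a positive degree and hence invertible only rationally. This is precisely the mechanism behind the rational agreement of qfh- and h-descent, and once it and the flattening statement are in hand, the equivalence of pw- and qw-descent for $K$ follows by chasing the homotopy Cartesian squares through Theorem (\ref{0.5}).
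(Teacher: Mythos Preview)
Your proposal rests on a misreading of the definitions. The pw- and qw-topologies are not a proper/finite dichotomy: both are generated by the strict \'etale and winding topologies, and they differ only in that pw uses the \emph{piercing} ecd-structure while qw uses the \emph{quasi-piercing} one (Definition~(\ref{6.12})). The quasi-piercing ecd-structure consists of strict closed squares, piercing squares, and pullbacks of the single explicit square~(\ref{6.1.2}); all of these have trivial group. There is no flattening, no platification, and no trace argument involved---the ${\bf Q}$-coefficients enter only through the winding part, which is identical in pw and qw and plays no role in the comparison.

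The paper's actual argument is quite different and much more concrete. One direction is trivial since qw is finer. For the other, the point is that the piercing ecd-structure is \emph{not} regular, so (\ref{0.5}) does not apply to it directly; instead one uses the variant (\ref{3.6}), for which one must verify that the auxiliary square $C'$ of (\ref{2.15}) attached to a piercing square is homotopy Cartesian. But $C'$ is precisely a pullback of (\ref{6.1.2}), and the paper exhibits an explicit factorization (\ref{6.13.3}) of that diagram through a specific log blow-up $W\to {\bf A}_{\bf N}\times{\bf A}_{\bf N}$: after this dividing cover, the square becomes a strict closed distinguished square. Dividing descent makes the blow-up invisible, and strict closed descent finishes the verification. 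Your ``refinement lemma'' gestures in the right direction but misses both the precise mechanism (the auxiliary $P'$-squares of (\ref{2.15})) and the fact that only one explicit toric blow-up is needed, not a general flattening theorem.
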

\begin{none}\label{0.9}
  This theorem has an application in the theory of motives over fs log schemes. The original motivation to define ecd-structures was to prove the above theorem since ecd-structures make us be able to prove it.
\end{none}
\begin{none}\label{0.10}
  {\it Organization of the paper.} In Section 2, we give several definitions. In Section 3, we study the notions of bounded, complete, and regular ecd-structures. In Section 4, we study the descent theory for complex of preseaves of {\bf Q}-modules, and this is generalized for derivators in Section 5 using techniques in \cite[\S 3]{CD12}. In Section 6, we apply our theorems to several topologies on the category of noetherian schemes, which reproves several results on the \'etale topology, qfh-topology, and h-topology in \cite{Voe96} and \cite[\S 3]{CD12} In Section 7, we apply our results to several ecd-structures on the category of noetheiran fs log schemes, which will be useful in the theory of motives over fs log schemes.
\end{none}
\begin{none}\label{0.11}
  {\it Acknowledgement.} The author's dissertation contains many of the results of this paper. The author is grateful to Martin Olsson for helpful communications.
\end{none}
\section{Definitions}
\begin{none}\label{5.6}
  Throughout this paper, $\mathscr{S}$ is a small category with an initial object $\emptyset$. In Section 6, $\mathscr{S}$ is the category of finite dimensional noetherian schemes, and in Section 7, $\mathscr{S}$ is the category of finite dimensional noetherian fs log schemes.
\end{none}
\begin{df}\label{5.1}
  Recall from \cite[\S 4.5.3]{Ayo07} that the $t_\emptyset$-topology on $\mathscr{S}$ is the minimal topology
  such that the empty sieve is a covering sieve for the initial object $\emptyset$. Note that a presheaf $F$ on
  $\mathscr{S}$ is a $t_\emptyset$-sheaf if and only if $F(\emptyset)=*$.
\end{df}
\begin{df}\label{5.5}
Here are some definitions we will frequently use.
  \begin{enumerate}[(1)]
    \item Let $\Lambda$ be a ring. We denote by ${\rm Mod}_{\Lambda}$ the category of $\Lambda$-modules.
    \item Let $t$ be a topology on $\mathscr{S}$. For any object $S$ of $\mathscr{S}$, we denote by $\rho(S)$
        (resp.\ $\rho_t(S)$) the presheaf represented by $S$ (resp.\ $t$-sheaf associated with the presheaf
        represented by $S$).
    \item Let $t$ be a topology on $\mathscr{S}$, and let $\Lambda$ be a ring. For any object $S$ of
        $\mathscr{S}$, we denote by $\Lambda(S)$ (resp.\ ${\Lambda}_t(S)$) the presheaf (resp.\ $t$-sheaf) of
        $\Lambda$-module freely generated by $\rho(S)$ (resp.\ $\rho_t(S)$).
    \item Let $t$ be a topology on $\mathscr{S}$, and let $\Lambda$ be a ring. We denote by ${\rm
        PSh}(\mathscr{S})$ (resp.\ ${\rm Sh}_t(\mathscr{S})$) the category of presheaves (resp.\ $t$-sheaves) of
        sets on $\mathscr{S}$, and we denote by ${\rm PSh}(\mathscr{S},\Lambda)$ (resp.\ ${\rm
        Sh}_t(\mathscr{S},\Lambda)$) the category of presheaves (resp.\ $t$-sheaves) of $\Lambda$-modules on
        $\mathscr{S}$. In ${\rm PSh}(\mathscr{S},\Lambda)$ and ${\rm
        Sh}_t(\mathscr{S},\Lambda)$, we denote by ${\bf 1}$ the constant presheaf ($t$-sheaf associated to) $\Lambda$
    \item An $\mathscr{S}$-{\it diagram} is a functor from a small category to $\mathscr{S}$.
    \item Let $\mathscr{X}:I\rightarrow \mathscr{S}$ be an $\mathscr{S}$-diagram. We often write it as $(\mathscr{X},I)$. For $i\in {\rm ob}\,I$, let $\mathscr{X}_i$ denote the image of $i$ in $\mathscr{S}$.
    \item Let $f:(\mathscr{X},I)\rightarrow (\mathscr{Y},J)$ be a morphism of $\mathscr{S}$-diagrams, and let $i$ be an object of $I$. We denote by $f_i$ the induced morphism $\mathscr{X}_i\rightarrow \mathscr{Y}_{f(i)}$ where $f(i)$ denotes the image of $i$ in $J$.
    \item A morphism $f:(\mathscr{X},I)\rightarrow (\mathscr{Y},J)$ is called {\it reduced} if the induced functor $I\rightarrow J$ is an equivalence.
    \item We denote by $\mathscr{S}^{\rm dia}$ the 2-category of $\mathscr{S}$-diagrams.
    \item We denote by ${\bf e}$ the category with only one object and only one morphism that is the identity
        morphism.
    \item Let $F:\mathcal{C}\rightleftarrows \mathcal{D}:G$ be a pair of adjoint functors between categories. We
        denote by
        \[{\rm ad}:{\rm id}\rightarrow GF,\quad {\rm ad'}:FG\rightarrow {\rm id}\]
        the unit and counit respectively.
  \end{enumerate}
\end{df}
\begin{df}\label{5.2}
  Here are some definitions about group actions.
  \begin{enumerate}[(1)]
    \item Let $G$ be a group. We denote by ${\bf e}_G$ the category with only one object $*$ and ${\rm
        Hom}(*,*)=G$.
    \item Let $A$ be a set or an abelian group with an action of a group $G$. Then we denote by $A^G$ the limit
        of the functor ${\bf e}_G\rightarrow {\rm Set}$ given by
        \[*\mapsto A,\quad g\in {\rm Hom}(*,*)\mapsto g:A\rightarrow A.\]
        Here, ${\rm Set}$ denotes the category of sets. Note that $A^G$ is equal to the subset of $A$ fixed by $G$.
    \item Let $t$ be a topology on $\mathscr{S}$, and let $S$ be an object of $\mathscr{S}$ with an action of a
        group $G$. We denote by $\rho(S)_G$ (resp.\ $\rho_t(S)_G$) the colimit of the functor ${\bf
        e}_G\rightarrow {\rm PSh}(\mathscr{S})$ (resp.\ ${\bf e}_G\rightarrow {\rm Sh}_t(\mathscr{S})$) induced
        by the $G$-action. Note that for any $F\in {\rm PSh}(\mathscr{S})$,
        \[{\rm Hom}_{{\rm PSh}(\mathscr{S})}(\rho(S)_G,F)=F(S)^G.\]
        Thus the induced morphism $\rho(S)\rightarrow \rho(S)_G$ is an epmorphism since $F(S)^G\rightarrow F(S)$ is injective.
    \item Let $f:\mathscr{S}'\rightarrow \mathscr{S}$ be a functor of categories, and let $\Lambda$ be a ring. We
        denote by
    \[f_\sharp,\;\;f_*\]
    the left adjoint and right adjoint of $f^*:{\rm D}({\rm PSh}(\mathscr{S},\Lambda))\rightarrow {\rm D}({\rm
    PSh}(\mathscr{S}',\Lambda))$ respectively.
    \item Let $\Lambda$ be a ring, and let $K$ be an object of ${\rm C}({\rm Mod}_{\Lambda})$ with an action of a
        group $G$. Consider it as an object of ${\rm C}({\rm PSh}({\bf e}_G,\Lambda))$. We denote by
        \[K^G\]
        the object $f_*K$ where $f:{\bf e}_G\rightarrow {\bf e}$ is the trivial functor.

        When $G$ is a finite group and $\Lambda$ is a ${\bf Q}$-algebra, we have the following formula for $K^G$
        in \cite[3.3.21]{CD12}. Consider the morphism $p_K:K\rightarrow K$ given by the formula
        \begin{equation}\label{5.2.1}
          p_K=\frac{1}{|G|}\sum_{g\in G}\sigma^g
        \end{equation}
        where $\sigma^g$ denotes the action of $g$ on $K$. The morphism $p_K$ is a projector, and $K^G$ is the
        image of $p_K$. Note that $K^G$ is a direct summand of $K$.
        \item Let $K$ be an object of ${\rm D}({\rm PSh}(\mathscr{S},\Lambda))$, and let $\mathscr{X}:I\rightarrow \mathscr{S}$ be an $\mathscr{S}$-diagram. We denote by $K(\mathscr{X},I)$ the object of ${\rm Ab}$ given by
            \[p_*\mathscr{X}^*K\]
            where $p:I\rightarrow {\bf e}$ is the functor to the trivial category. Note that we have the formula
            \[K(\mathscr{X},I)\cong R\varprojlim_{i\in I}\mathscr{X}_i.\]
  \end{enumerate}
\end{df}
\begin{df}\label{5.7}
  Let $P$ be an ecd-structure on $\mathscr{S}$. As in \cite{Voe10a}, we have the following definition.
  \begin{enumerate}[(1)]
     \item A $P$-{\it simple cover} is a cover that can be obtained by iterating covers of the form
         (\ref{5.3.1}).
     \item A {\it density structure} on $\mathscr{S}$ is a function which assign to any object $S$ of
         $\mathscr{S}$ a sequence $D_0(S),\;D_1(S)\,\ldots$ of family of morphisms to $S$ with the following
         conditions:
        \begin{enumerate}[(i)]
          \item $(\emptyset\rightarrow S)\in D_0(S)$ for all $S$,
          \item isomorphisms belong to $D_i$ for all $i$,
          \item $D_{i+1}\subset D_i$,
          \item if $g:Y\rightarrow X$ is in $D_i(X)$ and $f:X\rightarrow S$ is in $D_i(S)$, then
              $gf:Y\rightarrow S$ is in $D_i(S)$.
        \end{enumerate}
     \item The {\it dimension} of $S\in {\rm ob}\,\mathscr{S}$ (with respect to a density structure $D$) is the
         smallest number $n$ such that every morphism in $D_n(S)$ is an isomorphism. It is denoted by ${\rm
         dim}_DS$.
     \item Let $D$ be a density structure on $\mathscr{S}$. Then $(G,C)\in P$ is called {\it reducing} (with
         respect to $D$) if for any $i\geq 0$, $X_0\in D_{i+1}(X)$, $S_0'\in D_{i+1}(S')$, and $X_0'\in
         D_{i}(X')$, there exist a distinguished square
        \[C_1=\begin{tikzcd}
          X_1'\arrow[r]\arrow[d]&X_1\arrow[d]\\
          S_1'\arrow[r]&S_1
        \end{tikzcd}\]
         of group $G$ and a morphism $(G,C_1)\rightarrow (G,C)$ of distinguished squares of group $G$ such that
         $S_1\in D_{i+1}(S)$ and that
         \[X_1\rightarrow X,\quad S_1'\rightarrow S',\quad X_1'\rightarrow X'\]
         factors through $X_0$, $S_0$, and $X_0'$ respectively.
     \item A morphism of $P$-distinguished squares $C\rightarrow C'$ of group $G$ is a $G$-equivariant morphism of squares.
     \item A morphism of $P$-distinguished squares of group $G$ is called a {\it refinement} if the morphism of
         the commutative diagrams is an isomorphism on the right corner.
  \end{enumerate}
\end{df}
\begin{df}\label{5.4}
  Let $P$ be an ecd-structure on $\mathscr{S}$. As in \cite{Voe10a}, we introduce the notions of {\it bounded},
  {\it complete}, and {\it regular} ecd-structures as follows.
  \begin{enumerate}[(1)]
    \item $P$ is called {\it bounded} (with respect to a density structure $D$) if every element of $P$ has a refinement that is
        reducing with respect to $D$ and that for any object $S$ of $\mathscr{S}$, ${\rm dim}_DS$ is finite.
    \item $P$ is called {\it complete} if any covering sieve of an object $X\neq \emptyset$ of $\mathscr{S}$
        contains a sieve generated by a $P$-simple cover.
    \item $P$ is called {\it regular} if for any $(G,C)\in P$, $C$ is Cartesian, $S'\rightarrow S$ is a
        monomorphism, and the induced morphism
         \begin{equation}\label{5.4.1}
           (\rho_{t_P}(X')\times_{\rho_{t_P}(S')}\rho_{t_P}(X')_G)\coprod\rho_{t_P}(X)\rightarrow
         \rho_{t_P}(X)\times_{\rho_{t_P}(S)}\rho_{t_P}(X)_G
         \end{equation}
        of $t_P$-sheaves is an epimorphism where $\rho(S)$ denotes the representable $t_P$-sheaf of sets of $S$.
  \end{enumerate}
\end{df}
\section{Properties of ecd-structures}
\begin{none}\label{1.4}
  From (\ref{1.5}) to (\ref{1.7}), we state some results in \cite{Voe10a} that can be trivially generalized to
  ecd-structures. The proofs are identical to those in [loc.\ cit].
\end{none}
\begin{prop}\label{1.5}
  An ecd-structure $P$ on $\mathscr{S}$ is complete if the following two conditions hold.
  \begin{enumerate}[{\rm (i)}]
    \item Any morphism with values in $\emptyset$ is an isomorphism.
    \item For any distinguished square $(G,C)$ of the form {\rm (\ref{5.3.2})} and any morphism $Y\rightarrow X$,
        $(G,C\times_X Y)$ is distinguished.
  \end{enumerate}
\end{prop}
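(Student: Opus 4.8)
The plan is to identify the topology $t_P$ explicitly on nonempty objects and to verify that it is computed by simple covers. Call a sieve $R$ on an object $X$ \emph{good} if either $X\cong\emptyset$, or $R$ contains the sieve generated by some $P$-simple cover of $X$, and write $\tau$ for this assignment of sieves. Completeness is by definition the statement that every $t_P$-covering sieve of a nonempty object is good, that is, $t_P\subseteq\tau$. Since $t_P$ is the smallest topology containing the $t_\emptyset$-covers and the covers (\ref{5.3.1}) attached to the elements of $P$, it suffices to prove that $\tau$ is itself a Grothendieck topology and that these generating covers are good. The latter is immediate: every sieve on $\emptyset$ is good, so the empty sieve on $\emptyset$ is good; and for $(G,C)\in P$ the cover (\ref{5.3.1}) is a one-step $P$-simple cover, so the sieve it generates is good.

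It remains to check the three topology axioms for $\tau$. The maximal sieve is good because it contains the trivial simple cover $\{\mathrm{id}_X\}$, and the local axiom is formal: if a good sieve $R$ on $X$ contains a simple cover $\{u_i:U_i\to X\}$ and a sieve $R'$ satisfies that $u_i^{*}R'$ is good on each $U_i$, then choosing a simple cover of each $U_i$ inside $u_i^{*}R'$ and composing produces a simple cover of $X$ all of whose legs lie in $R'$, so $R'$ is good. The substantive point, which I expect to be the main obstacle, is stability under pullback: if $R$ is good on $X$ and $y:Y\to X$ is any morphism, then $y^{*}R$ is good on $Y$. I would prove this by induction on the number of refinement steps used to construct the simple cover inside $R$. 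The base case $\{\mathrm{id}_X\}$ pulls back to $\{\mathrm{id}_Y\}$. For the inductive step, suppose the simple cover is obtained from a simpler one $\{U_i\to X\}$ by refining a single leg $U_{i_0}\to X$ through the cover of a distinguished square $C$ whose terminal vertex is $U_{i_0}$. By the inductive hypothesis the pulled-back legs $U_i\times_X Y\to Y$ already form a simple cover of $Y$, and by condition (ii) the square $C$ pulls back along $U_{i_0}\times_X Y\to U_{i_0}$ to a distinguished square of the same group $G$; refining the leg $U_{i_0}\times_X Y$ through its cover yields a simple cover of $Y$ contained in $y^{*}R$. The fibre products occurring here exist precisely because they are iterated pullbacks of distinguished squares, which remain distinguished by (ii).

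Condition (i) is what makes the treatment of the initial object coherent throughout. Because every morphism into $\emptyset$ is an isomorphism, $\emptyset$ is a strict initial object, so the empty sieve is $t_\emptyset$-covering for no object other than $\emptyset$ itself; this is exactly what keeps the clause ``$X\cong\emptyset$'' in the definition of $\tau$ stable. Indeed, in the pullback axiom the case $X\cong\emptyset$ forces $Y\cong\emptyset$ by (i), so $y^{*}R$ is automatically good, while if $Y\cong\emptyset$ the conclusion is again automatic and otherwise the combinatorial induction above applies unobstructed. Having verified all three axioms, $\tau$ is a Grothendieck topology containing every generator of $t_P$, whence $t_P\subseteq\tau$ and $P$ is complete.
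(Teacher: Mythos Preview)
Your proof is correct and is exactly the approach the paper takes: the paper gives no independent argument but declares the proof ``identical to those in [loc.\ cit]'' (i.e., \cite{Voe10a}), and Voevodsky's argument there proceeds precisely by showing that the sieves containing a $P$-simple cover, together with all sieves on $\emptyset$, form a Grothendieck topology containing the generators of $t_P$, with condition (ii) supplying stability under base change and condition (i) guaranteeing that $\emptyset$ is strict initial. The only place to be slightly more careful than ``formal'' is the transitivity axiom when some $U_i\cong\emptyset$, but this is harmless once one notes that the unique map $\emptyset\to X$ lies in every nonempty sieve on $X$.
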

\begin{prop}\label{1.6}
  Let $P_1$ and $P_2$ be ecd-structures on $\mathscr{S}$. If $P_1$ and $P_2$ are complete {\rm (}resp.\ regular,
  resp.\ bounded with respect to the same density structure $D${\rm )}, then $P_1\cup P_2$ is complete {\rm (}resp.\ regular,
  resp.\ bounded with respect to $D${\rm )}.
\end{prop}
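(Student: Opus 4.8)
The three parts are independent, and for regularity and boundedness the defining conditions are imposed one square at a time, so they pass to the union once one checks that they are not disturbed by the finer topology $t_{P_1\cup P_2}$. For regularity, the conditions that $C$ be Cartesian and that $S'\to S$ be a monomorphism live in $\mathscr{S}$ and are independent of the topology, hence hold for every $(G,C)\in P_1\cup P_2$ because they hold in whichever $P_i$ contains the square. The only point needing care is the epimorphism (\ref{5.4.1}), which is stated with $t_{P_i}$-sheaves for $(G,C)\in P_i$ but is now required for $t_{P_1\cup P_2}$-sheaves. Since $t_{P_1\cup P_2}$ is finer than $t_{P_i}$, the category ${\rm Sh}_{t_{P_1\cup P_2}}(\mathscr{S})$ is a subtopos of ${\rm Sh}_{t_{P_i}}(\mathscr{S})$, and I would apply the corresponding associated-sheaf functor $a$ to (\ref{5.4.1}); as $a$ is left exact and cocontinuous, it sends $\rho_{t_{P_i}}(-)$, $\rho_{t_{P_i}}(-)_G$ and the fibre products to their $t_{P_1\cup P_2}$-analogues and preserves the epimorphism, so (\ref{5.4.1}) holds for $P_1\cup P_2$. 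For boundedness with respect to the common density structure $D$, the finiteness of ${\rm dim}_DS$ is a condition on $D$ alone; and given $(G,C)\in P_1\cup P_2$, say in $P_1$, the reducing refinement $(G,C')\to(G,C)$ supplied by boundedness of $P_1$ is still a refinement in $P_1\cup P_2$, and every distinguished square of group $G$ witnessing its reducing property for $P_1$ remains distinguished for $P_1\cup P_2$, so $(G,C')$ stays reducing.

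Completeness is the substantial case, and the plan is to reformulate it. Because a $P$-simple cover is always $t_P$-covering, the collection $\mathcal{T}_P$ of sieves on non-initial objects that contain a sieve generated by a $P$-simple cover, together with all sieves on $\emptyset$, is contained in $t_P$; and since $\mathcal{T}_P$ contains the generators of $t_P$, the ecd-structure $P$ is complete precisely when $\mathcal{T}_P=t_P$, that is, precisely when $\mathcal{T}_P$ is a Grothendieck topology. So I would prove completeness of $P_1\cup P_2$ by showing that $\mathcal{T}_{P_1\cup P_2}$ is a topology. Transitivity and saturation are immediate, since an iterate of $(P_1\cup P_2)$-simple covers is again one; the content is stability under base change.

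To establish base-change stability I would induct on the number of iterations defining a $(P_1\cup P_2)$-simple cover $\sigma$ of $X$ and show that for every $h:Y\rightarrow X$ the sieve $h^*\langle\sigma\rangle$ contains a $(P_1\cup P_2)$-simple cover. When $\sigma$ is a single basic cover $\{f,g\}$ from some $(G,C)\in P_j$, completeness of $P_j$ already puts a $P_j$-simple cover, hence a $(P_1\cup P_2)$-simple cover, inside $h^*\langle f,g\rangle$. In general, write $\sigma$ as a leading basic cover from some $P_j$ refined on each member by simple covers of smaller height; pulling back the leading cover and applying completeness of $P_j$ yields a $P_j$-simple cover $\tau=\{Y_k\rightarrow Y\}$ inside the pullback, with each $Y_k\rightarrow Y$ factoring through the base change of a member of the leading cover, and the induction hypothesis then refines each $Y_k$ by the pullback of the corresponding smaller simple cover. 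Composing $\tau$ with these refinements gives a $(P_1\cup P_2)$-simple cover inside $h^*\langle\sigma\rangle$. I expect the main obstacle to be exactly this bookkeeping: checking that the pullback of an iterated simple cover can be refined iteration by iteration using only the completeness of the individual $P_j$, and that the required factorizations of the members of $\tau$ through the base changes are genuinely available so the induction hypothesis applies. Granting base-change stability, $\mathcal{T}_{P_1\cup P_2}$ is a Grothendieck topology and $P_1\cup P_2$ is complete.
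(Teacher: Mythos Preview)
The paper does not supply its own proof here; the paragraph preceding Proposition~\ref{1.5} states that Propositions \ref{1.5}--\ref{1.7} are results of \cite{Voe10a} whose proofs carry over identically to the equivariant setting. Your argument is correct and follows that standard line: regularity and boundedness are checked square by square (with sheafification correctly carrying the epimorphism (\ref{5.4.1}) to the finer topology $t_{P_1\cup P_2}$), and completeness is reduced to base-change stability of $(P_1\cup P_2)$-simple covers, which you establish by induction on the height of the cover using completeness of the individual $P_j$.
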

\begin{prop}\label{1.7}
  Let $P$ be an ecd-structure on $\mathscr{S}$. For any object $S$ of $\mathscr{S}$, if $P$ is complete {\rm
  (}resp.\ regular, resp.\ bounded with respect to a density structure $D${\rm )}, then $P/S$ is complete {\rm (}resp.\ regular,
  resp.\ bounded with respect to $D/S${\rm )}.
\end{prop}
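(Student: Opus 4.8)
The plan is to verify each of the three properties of $P/S$ (relative to $D/S$) by transporting the corresponding property of $P$ along the forgetful functor $u\colon\mathscr{S}/S\to\mathscr{S}$, using the standard comparison between the slice site $(\mathscr{S}/S,t_{P/S})$ and the localized topos. First I would fix the conventions: $P/S$ consists of those distinguished squares with $G$-action in $\mathscr{S}/S$ whose image under $u$ lies in $P$, and $D/S$ is the density structure sending an object $a\colon X\to S$ to the family $(D/S)_i(a)$ of morphisms $\beta\colon Y\to X$ over $S$ with $\beta\in D_i(X)$. The axioms of (\ref{5.7}) for $D/S$ are immediate from those for $D$, and since a morphism of $\mathscr{S}/S$ is an isomorphism exactly when its image under $u$ is, the assignment $\beta\mapsto(\beta\colon (Y,a\beta)\to(X,a))$ gives a bijection $D_i(X)\cong (D/S)_i(a)$ compatible with ``being an isomorphism''; hence $\dim_{D/S}(a)=\dim_D X$, which is finite by hypothesis. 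I would also record two facts about $u$: (a) a sieve on $a\colon X\to S$ is $t_{P/S}$-covering if and only if its image sieve on $X$ is $t_P$-covering, and the $P/S$-simple covers of $a$ correspond bijectively to the $P$-simple covers of $X$; and (b) the topology $t_{P/S}$ coincides with the topology induced from $t_P$, so that there is an equivalence ${\rm Sh}_{t_{P/S}}(\mathscr{S}/S)\simeq {\rm Sh}_{t_P}(\mathscr{S})/\rho_{t_P}(S)$ under which the representable sheaf of $a\colon X\to S$ corresponds to $(\rho_{t_P}(X)\to\rho_{t_P}(S))$.

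For completeness I would use (a). Given a $t_{P/S}$-covering sieve $R$ of an object $a\colon X\to S$ with $X\neq\emptyset$, its image is a $t_P$-covering sieve of $X$, so by completeness of $P$ it contains a sieve generated by a $P$-simple cover of $X$; transporting this $P$-simple cover back through the bijection in (a) produces a $P/S$-simple cover of $a$ whose generated sieve is contained in $R$. Since the initial object of $\mathscr{S}/S$ is $\emptyset\to S$, this is exactly the completeness of $P/S$.

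For regularity I would use (b). The forgetful functor $\pi\colon{\rm Sh}_{t_P}(\mathscr{S})/\rho_{t_P}(S)\to{\rm Sh}_{t_P}(\mathscr{S})$ is a left adjoint, hence preserves all colimits, in particular coproducts and the coinvariant colimit $(-)_G$ over $\mathbf{e}_G$; moreover fiber products, monomorphisms, and epimorphisms are computed identically in a slice, so $\pi$ preserves and reflects them. Applying $\pi$ to a square $(G,C)\in P/S$ therefore converts its Cartesianness, the monomorphism condition on the bottom arrow, and the epimorphism (\ref{5.4.1}) into the identical assertions for $u(G,C)\in P$, which hold by regularity of $P$; conversely, reflection of these properties by $\pi$ lets them be read back off $u(G,C)$, giving regularity of $P/S$.

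For boundedness, given $(G,C)\in P/S$ its image $u(G,C)\in P$ admits a reducing refinement by hypothesis; the refining square $C_1$ and the morphism $(G,C_1)\to(G,C)$ map canonically to $S$ (anything over a corner of $C$ is over $S$), so they live in $\mathscr{S}/S$ and refine $(G,C)$ there. Under the identification $D_i(X)\cong(D/S)_i(a)$, the reducing condition of (\ref{5.7}) for $D/S$ unwinds to that for $D$, using that a factorization through an object of $D_i(X)$ by morphisms over $S$ is automatically a factorization over $S$; together with the finiteness of $\dim_{D/S}$ this yields boundedness. The hard part will be the regularity step, since (\ref{5.4.1}) mixes a fiber product, a coproduct, the colimit $(-)_G$, and an epimorphism of $t_{P/S}$-sheaves that must all be faithfully transported; this is formal once (b) is in hand, so the only genuinely non-formal point is establishing (b), which I would deduce from the description of $t_{P/S}$ as generated by $t_\emptyset$ and the distinguished-square covers (\ref{5.3.1}) together with the matching description of the induced topology.
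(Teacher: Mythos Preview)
Your argument is correct. The paper itself does not supply a proof here: as noted in (\ref{1.4}), Propositions~(\ref{1.5})--(\ref{1.7}) are stated as direct generalizations of results in \cite{Voe10a} with the remark that ``the proofs are identical to those in [loc.\ cit].'' Your write-up is precisely the kind of verification one carries out to see that Voevodsky's argument goes through unchanged in the equivariant setting, and each of the three steps (completeness via the bijection between sieves and simple covers upstairs and downstairs, regularity via the slice-topos comparison ${\rm Sh}_{t_{P/S}}(\mathscr{S}/S)\simeq {\rm Sh}_{t_P}(\mathscr{S})/\rho_{t_P}(S)$, boundedness via transporting reducing refinements) is sound.

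One small remark on the regularity step: you are right that the only non-formal point is identifying $t_{P/S}$ with the topology induced from $t_P$. This is immediate once you observe that the generating covers of $t_P$ on an object $T$ are the families $\{f,g\}$ of (\ref{5.3.1}) with $T$ in the lower-right corner of the square, so that whenever $T$ lies over $S$ the entire square does too; hence the generators of $t_P$ restricted to objects over $S$ coincide on the nose with the generators of $t_{P/S}$. With that in hand, your use of the fact that the forgetful functor from a slice topos creates finite limits, colimits, monomorphisms, and epimorphisms handles (\ref{5.4.1}) cleanly.
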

\begin{prop}\label{1.8}
  Let $P$ be an ecd-structure on $\mathscr{S}$ such that for any $(G,C)\in P$ of the form {\rm (\ref{5.3.2})},
  \begin{enumerate}[{\rm (i)}]
    \item $C$ is Cartesian,
    \item $S'\rightarrow S$ is a monomorphism,
    \item in the Cartesian diagram
    \begin{equation}\label{1.8.1}\begin{tikzcd}
      G\times X'\arrow[d]\arrow[r]&G\times X\arrow[d]\\
      X'\times_{S'}X'\arrow[r,"g'\times_{f'}g'"]&X\times_S X
    \end{tikzcd}\end{equation}
  in $\mathscr{S}$, the family
  \[\{X'\times_{S'}X'\rightarrow X\times_S X,\;G\times X\rightarrow X\times_S X\}\]
  of morphisms is a $t_P$-cover. Here, the right vertical arrow is induced by the projection and the group action $G\times
  X\rightrightarrows X$.
  \end{enumerate}
   Then $P$ is regular.
\end{prop}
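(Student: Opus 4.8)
The plan is to verify the three clauses of regularity in Definition~(\ref{5.4})(3) in turn. Conditions~(i) and~(ii) of the hypothesis immediately give that $C$ is Cartesian and that $S'\to S$ is a monomorphism, so the entire content is to show that the morphism~(\ref{5.4.1}) is an epimorphism of $t_P$-sheaves. Write $T:=\rho_{t_P}(X)\times_{\rho_{t_P}(S)}\rho_{t_P}(X)_G$ for its target and $B:=(\rho_{t_P}(X')\times_{\rho_{t_P}(S')}\rho_{t_P}(X')_G)\coprod\rho_{t_P}(X)$ for its source.

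First I would record the sheaf-theoretic formalism. Writing $a$ for the sheafification functor, since $a$ is a left adjoint it commutes with colimits, so $\rho_{t_P}(X)_G=a(\rho(X)_G)$, and applying $a$ to the presheaf epimorphism $\rho(X)\to\rho(X)_G$ of~(\ref{5.2})(3) shows that $\rho_{t_P}(X)\to\rho_{t_P}(X)_G$ is an epimorphism. Moreover $a$ is left exact and $\rho$ preserves the fibre products that exist in $\mathscr{S}$, so $\rho_{t_P}(X\times_S X)\cong\rho_{t_P}(X)\times_{\rho_{t_P}(S)}\rho_{t_P}(X)$; quotienting the second factor yields a canonical morphism
\[
q\colon \rho_{t_P}(X\times_S X)\longrightarrow T,
\]
which is an epimorphism because it is the pullback along the projection $\rho_{t_P}(X)\to\rho_{t_P}(S)$ of the epimorphism $\rho_{t_P}(X)\to\rho_{t_P}(X)_G$, and epimorphisms are stable under pullback in a topos.

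The heart of the argument is to feed condition~(iii) into $q$. That condition says the family $\{X'\times_{S'}X'\to X\times_S X,\ G\times X\to X\times_S X\}$ is a $t_P$-cover, i.e.\ that the morphism $A\to\rho_{t_P}(X\times_S X)$ is an epimorphism, where $A:=\rho_{t_P}(X'\times_{S'}X')\coprod\rho_{t_P}(G\times X)$; composing with $q$ gives an epimorphism $A\to T$. I would then check that this morphism factors through $B$. On the first summand, $g'\times_{f'}g'$ followed by $q$ sends $(x_1',x_2')\mapsto (g'(x_1'),[g'(x_2')])$, which is the image, under the first summand of~(\ref{5.4.1}), of the morphism $\rho_{t_P}(X'\times_{S'}X')\to\rho_{t_P}(X')\times_{\rho_{t_P}(S')}\rho_{t_P}(X')_G$ that quotients the second factor. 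On the second summand, the map $G\times X\to X\times_S X$ is $(g,x)\mapsto(x,gx)$, so after $q$ it becomes $(g,x)\mapsto(x,[gx])=(x,[x])$ because $[gx]=[x]$ in $\rho_{t_P}(X)_G$; this factors through the projection $G\times X\to X$ followed by the structural morphism $\rho_{t_P}(X)\to T$, $x\mapsto(x,[x])$, which is exactly the second summand of~(\ref{5.4.1}).

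With these factorizations in hand the conclusion is formal: the composite $A\to B\to T$ equals the epimorphism $A\to T$ produced above, and since any morphism through which an epimorphism factors is itself an epimorphism, the map $B\to T$, namely~(\ref{5.4.1}), is an epimorphism. The only delicate points I anticipate are the compatibility checks that sheafification preserves the needed finite limits and colimits and that the two explicit factorizations genuinely commute with the structural projections to $T$; once the maps are written out as above these are routine, so the substantive input is entirely the reformulation of~(iii) as the cover $A\to\rho_{t_P}(X\times_S X)$.
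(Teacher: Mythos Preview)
Your argument is correct and is essentially the same as the paper's: both form the commutative square with $A=\rho_{t_P}(X'\times_{S'}X')\amalg\rho_{t_P}(G\times X)$ mapping to $\rho_{t_P}(X\times_S X)$ on top and $B\to T$ on the bottom, observe that the top is an epimorphism by hypothesis~(iii) and the right vertical (your $q$) is an epimorphism because $\rho_{t_P}(X)\to\rho_{t_P}(X)_G$ is, and conclude that the bottom is an epimorphism. The only cosmetic difference is that you phrase the square as a factorization $A\to B\to T$ of the epimorphism $A\to T$, and you make explicit the topos-theoretic fact that epimorphisms are stable under pullback, which the paper uses tacitly.
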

\begin{proof}
  We put $t=t_P$. Consider the commutative diagram
  \[\begin{tikzcd}
    \rho_t(X'\times_{S'}X')\coprod \rho_t(G\times X)\arrow[d,"\alpha'\coprod \beta"]\arrow[r,"\gamma'"]&\rho_t(X\times_S X)\arrow[d,"\alpha"]\\
    (\rho_t(X')\times_{\rho_t(S')}\rho_t(X')_G)\coprod \rho_t(X)\arrow[r,"\gamma"]&\rho_t(X)\times_{\rho_t(S)}\rho_t(X)_G
  \end{tikzcd}\]
  of $t_P$-sheaves where $\alpha$, $\alpha'$, and $\beta$ are induced by the $G$-action, and $\gamma$ and $\gamma'$ are induced by (\ref{1.8.1}). The morphism $\rho_t(G\times X)\rightarrow \rho_t(X)$ is an epimorphism since it has a section. Thus morphisms $\rho_t(X)\rightarrow \rho_t(X)_G$ and $\rho_t(X')\rightarrow \rho_t(X')_G$ are also epimorphisms by (\ref{5.2}). Thus the vertical arrows are epimorphisms since
  \[\rho_t(X\times_S X)=\rho_t(X)\times_{\rho_t(S)}\rho_t(X),\quad
  \rho_t(X'\times_{S'}X')=\rho_t(X')\times_{\rho_t(S')}\rho_t(X').\]
  Now the lower horizontal arrow is surjective since the upper horizontal arrow is surjective by (iii).
\end{proof}
\begin{prop}\label{1.1}
  Let $P$ be a regular ecd-structure on $\mathscr{S}$, and let $F$ be a $t_P$-sheaf. Then for a distinguished
  square of the form {\rm (\ref{5.3.2})}, the induced diagram
  \[\begin{tikzcd}
    F(S)\arrow[r]\arrow[d]&F(S')\arrow[d]\\
    F(X)^G\arrow[r]&F(X')^G
  \end{tikzcd}\]
  of sets is Cartesian.
\end{prop}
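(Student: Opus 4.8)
The plan is to fix the covering family $\{f,g\}=\{X\to S,\,S'\to S\}$ of the $t_P$-topology, compute $F(S)$ through the associated descent condition, and compare the outcome with the claimed fibre product $F(S')\times_{F(X')^G}F(X)^G$. Write $t=t_P$. Since $P$ is regular, $C$ is Cartesian and $S'\to S$ is a monomorphism, so that $X\times_S S'=S'\times_S X=X'$ and $S'\times_S S'=S'$. As $F$ is a $t$-sheaf and $\{f,g\}$ is a covering family all of whose relevant fibre products exist in $\mathscr{S}$, the descent condition identifies $F(S)$ with the set of pairs $(s,u)\in F(X)\times F(S')$ such that $p_1^*s=p_2^*s$ in $F(X\times_S X)$ and $(g')^*s=(f')^*u$ in $F(X')$, where $p_1,p_2\colon X\times_S X\to X$ are the two projections. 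On the other hand, using that $f$ and $f'$ are $G$-equivariant for the trivial actions on $S$ and $S'$, the target $F(S')\times_{F(X')^G}F(X)^G$ is the set of pairs $(s,u)\in F(X)\times F(S')$ with $s\in F(X)^G$ and $(g')^*s=(f')^*u$. The comparison map sends $a\in F(S)$ to $(f^*a,g^*a)$, which under these identifications is the canonical map between two subsets of $F(X)\times F(S')$. Thus everything reduces to the claim: for $(s,u)$ with $(g')^*s=(f')^*u$, one has $p_1^*s=p_2^*s$ if and only if $s\in F(X)^G$.

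For the ``only if'' direction I would use the graph of the action: for $g\in G$ the automorphism $\sigma_g\colon X\to X$ lies over $S$, so $(\mathrm{id},\sigma_g)\colon X\to X\times_S X$ is a section of $p_1$ with $p_2\circ(\mathrm{id},\sigma_g)=\sigma_g$. Pulling $p_1^*s=p_2^*s$ back along this section yields $s=\sigma_g^*s$, so $s\in F(X)^G$; note that this implication uses neither regularity nor the compatibility with $u$.

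The ``if'' direction is the main obstacle, and it is exactly here that the regularity epimorphism (\ref{5.4.1}) enters. Applying ${\rm Hom}(-,F)$ to that epimorphism gives an injection out of ${\rm Hom}(\rho_t(X)\times_{\rho_t(S)}\rho_t(X)_G,F)$. Because colimits are universal in the topos of $t$-sheaves and $\rho_t$ preserves the fibre products that exist in $\mathscr{S}$, one has $\rho_t(X)\times_{\rho_t(S)}\rho_t(X)_G\cong \rho_t(X\times_S X)_G$ for the $G$-action on the second factor, so by (\ref{5.2}) this ${\rm Hom}$-set is the subset $F(X\times_S X)^G$ of sections invariant under the second-factor action; the source of the injection becomes $F(X'\times_{S'}X')^G\times F(X)$, and the injection is $w\mapsto\big((g'\times g')^*w,\Delta^*w\big)$, where $\Delta$ is the diagonal and $g'\times g'\colon X'\times_{S'}X'\to X\times_S X$.

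Given $s\in F(X)^G$ with $(g')^*s=(f')^*u$, I would then set $w_1=p_1^*s$ and $w_2=p_2^*s$ and verify three things. First, both $w_1$ and $w_2$ are invariant under the second-factor action (the $G$-invariance of $s$ is used precisely for $w_2$), so that they lie in $F(X\times_S X)^G$. Second, their diagonal restrictions agree, since $p_i\circ\Delta=\mathrm{id}$ gives $\Delta^*w_1=s=\Delta^*w_2$. Third, writing $q_1,q_2\colon X'\times_{S'}X'\to X'$ for the projections, the restrictions along $g'\times g'$ are $q_1^*(g')^*s$ and $q_2^*(g')^*s$, and these coincide because $(g')^*s=(f')^*u$ and $f'q_1=f'q_2$. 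Injectivity then forces $w_1=w_2$, that is $p_1^*s=p_2^*s$, which proves the claim and hence the proposition.
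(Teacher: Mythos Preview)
Your argument is correct and follows the same strategy as the paper: both proofs use the sheaf condition for the cover $\{f,g\}$, the identity $S'\times_S S'\cong S'$, and then the regularity epimorphism (\ref{5.4.1}) to handle the crucial implication. The paper phrases the computation via ${\rm Hom}$-sets out of representable sheaves while you unwind everything concretely in terms of elements $s,u,w_1,w_2$ and pullback maps; your graph-of-the-action argument for the ``only if'' direction (showing $f^*a\in F(X)^G$) is a useful detail that the paper asserts without justification.
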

\begin{proof}
  We put $t=t_P$. Since $F$ is a $t$-sheaf, the induced function $F(S)\rightarrow F(S')\times F(X)$ is injective.
  Thus the induced function
  \[F(S)\rightarrow F(S')\times F(X)^G\]
  is also injective. The remaining is to show that the equalizer of the induced functions
  \begin{equation}\label{1.1.1}
    F(S')\times F(X)^G\rightrightarrows F(X')^G
  \end{equation}
  is $F(S)$. Since $F$ is a $t$-sheaf, $F(S)$ is the equalizer of the induced functions
  \[F(S')\times F(X)\rightrightarrows F(S'\times_S S')\times F(X')\times F(X')\times F(X\times_S X),\]
  which is equal to the equalizer of the induced functions
  \[{\rm Hom}(\rho_t(S')\coprod\rho_t(X),F)\rightrightarrows {\rm Hom}(\rho_t(S'\times_S
  S')\coprod\rho_t(X')\coprod \rho_t(X')\coprod\rho_t(X\times_S X),F).\]
  Note that $S'\times_S S'\cong S'$ since $g$ is a monomorphism. The image of $F(S)$ in $F(S')\times F(X)$ is in
  $F(S')\times F(X)^G$, so we see that $F(S)$ is the equalizer of the induced functions
  \[{\rm Hom}(\rho_t(S')\coprod \rho_t(X)_G,F)\rightrightarrows {\rm Hom}(\rho_t(S')\coprod \rho_t(X')_G\coprod
  \rho_t(X')_G\coprod(\rho_t(X)\times_{\rho_t(S)}\rho_t(X)_G),F).\]
  Thus $F(S)$ is the equalizer of the induced functions
  \[{\rm Hom}(\rho_t(S')\coprod \rho_t(X)_G,F)\rightrightarrows {\rm Hom}( \rho_t(X')_G\coprod
  (\rho_t(X)\times_{\rho_t(S)}\rho_t(X)_G),F).\]
  Using the fact that (\ref{5.4.1}) is surjective, we see that $F(S)$ is the equalizer of the induced functions
  \begin{equation}\label{1.1.3}
    {\rm Hom}(\rho_t(S')\coprod \rho_t(X)_G,F)\rightrightarrows {\rm Hom}( \rho_t(X')_G\coprod
    (\rho_t(X')\times_{\rho_t(S')}\rho_t(X')_G)\coprod \rho_t(X),F).
  \end{equation}
  Now, let $(a,b)\in F(S')\times F(X)^G$ be an element of the equalizer of the functions in (\ref{1.1.1}). Then the
  images of $b$ for the two induced functions
  \begin{equation}\label{1.1.2}
    {\rm Hom}(\rho_t(X)_G,F)\rightrightarrows {\rm Hom}(\rho_t(X')\times_{\rho_t(S')}\rho_t(X')_G,F)
  \end{equation}
  are equal to the images of $a$ for the two equal induced functions
  \[{\rm Hom}(\rho_t(S'),F)\rightrightarrows {\rm Hom}(\rho_t(X')\times_{\rho_t(S')}\rho_t(X')_G,F)\]
  respectively. Thus $b$ is in the equalizer of the functions in (\ref{1.1.2}). This implies that $(a,b)$ is in the equalizer of
  the functions in (\ref{1.1.3}), which completes the proof.
\end{proof}
\begin{prop}\label{1.2}
  Let $P$ be a regular ecd-structure on $\mathscr{S}$. Then for a distinguished square of the form {\rm
  (\ref{5.3.2})}, the induced diagram
  \[\begin{tikzcd}
    \rho_{t_P}(X')_G\arrow[r]\arrow[d]&\rho_{t_P}(X)_G\arrow[d]\\
    \rho_{t_P}(S')\arrow[r]&\rho_{t_P}(S)
  \end{tikzcd}\]
  of $t_P$-sheaves of sets on $\mathscr{S}$ is coCartesian.
\end{prop}
\begin{proof}
  It suffices to show that for any $t_P$-sheaf, the induced diagram
    \[\begin{tikzcd}
    {\rm Hom}(\rho_{t_P}(S),F)\arrow[r]\arrow[d]&{\rm Hom}(\rho_{t_P}(S'),F)\arrow[d]\\
    {\rm Hom}(\rho_{t_P}(X)_G,F)\arrow[r]&{\rm Hom}(\rho_{t_P}(X')_G,F)
  \end{tikzcd}\]
  of sets is Cartesian. It follows from (\ref{1.1}).
\end{proof}
\begin{prop}\label{1.3}
  Let $P$ be a regular ecd-structure on $\mathscr{S}$, and let $\Lambda$ be a ring. Then for a distinguished square of the form {\rm
  (\ref{5.3.2})}, the sequence
  \[0\longrightarrow \Lambda_{t}(X')_G\longrightarrow \Lambda_{t}(X)_G\oplus \Lambda_t(S')\longrightarrow \Lambda_t(S)\longrightarrow 0\]
  in ${\rm Sh}_{t_P}(\mathscr{S},\Lambda)$ is exact.
\end{prop}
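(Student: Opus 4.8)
The plan is to deduce right-exactness from the coCartesian square of (\ref{1.2}) and then to establish left-exactness (injectivity of the first map) separately, since that is where the equivariant structure really enters. Put $t=t_P$. Commutativity of (\ref{5.3.2}) already shows the displayed sequence is a complex. The free $\Lambda$-module functor $\Lambda_t(-)\colon {\rm Sh}_t(\mathscr{S})\to {\rm Sh}_t(\mathscr{S},\Lambda)$ is left adjoint to the forgetful functor, so it preserves colimits; in particular it carries the coCartesian square of (\ref{1.2}) to a pushout square of $\Lambda$-module sheaves (note $\Lambda_t(-)_G$ is the free module sheaf on $\rho_t(-)_G$, as the free functor commutes with the $G$-colimit). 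In the abelian category ${\rm Sh}_t(\mathscr{S},\Lambda)$ a pushout square exhibits $\Lambda_t(S)$ as the cokernel of the map $\phi\colon \Lambda_t(X')_G\rightarrow \Lambda_t(X)_G\oplus \Lambda_t(S')$, which yields exactness at $\Lambda_t(S)$ and at the middle term at once.

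It therefore remains to prove that $\phi$ is a monomorphism, and I would obtain this by showing that its first component $g'_*\colon \Lambda_t(X')_G\rightarrow \Lambda_t(X)_G$ is already mono. By regularity (\ref{5.4}) the square (\ref{5.3.2}) is Cartesian and $g\colon S'\rightarrow S$ is a monomorphism, so $g'\colon X'\rightarrow X$ is the base change of $g$ and hence a monomorphism in $\mathscr{S}$. Consequently $\rho_t(X')\rightarrow \rho_t(X)$ is a monomorphism of $t$-sheaves (Yoneda, together with left-exactness of sheafification), and since $g'$ is $G$-equivariant this exhibits $\rho_t(X')$ as a $G$-stable subobject of $\rho_t(X)$. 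The point is then that passing to $G$-coinvariants preserves \emph{this particular} monomorphism: at the presheaf level, for each $U$ the map of orbit sets $\rho(X')(U)/G\rightarrow \rho(X)(U)/G$ is injective, because if two elements of $\rho(X')(U)$ lie in one $G$-orbit of $\rho(X)(U)$ then, the inclusion being $G$-stable, they already lie in one $G$-orbit of $\rho(X')(U)$. Thus the presheaf coinvariant map is a monomorphism, and sheafification being exact gives that $\rho_t(X')_G\rightarrow \rho_t(X)_G$ is a monomorphism of $t$-sheaves; applying $\Lambda_t(-)$, which sends monos of sheaves of sets to monos of sheaves of modules, shows $g'_*$ is mono, whence $\phi$ is mono and the sequence is short exact.

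The main obstacle is precisely this injectivity, and it is genuinely more than formal: a pushout square of sheaves of sets—even a bicartesian one—need not become short exact after applying the free functor, since $G$-coinvariants (a colimit) do not preserve monomorphisms in general. What rescues the argument is that the relevant subobject is $G$-stable, reducing matters to the elementary set-level statement about orbit sets. I would also remark that this is exactly why the statement holds for an arbitrary ring $\Lambda$ rather than only for ${\bf Q}$-algebras: no averaging or transfer (\ref{5.2.1}) is used, as the entire argument takes place at the level of sheaves of sets before freely generating $\Lambda$-modules.
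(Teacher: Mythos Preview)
Your proof is correct and follows the same strategy as the paper: right-exactness comes from applying the colimit-preserving free functor $F\mapsto \Lambda(F)$ to the coCartesian square (\ref{1.2}), and left-exactness comes from the fact that $X'\to X$ is a monomorphism (by regularity). The paper's three-sentence proof passes directly from ``$X'\to X$ is a monomorphism'' to ``the second arrow is a monomorphism'' without comment; your argument supplies exactly the detail the paper elides, namely that $G$-coinvariants preserve this particular mono because $\rho(X')$ sits inside $\rho(X)$ as a $G$-stable subobject, so the orbit-set map is injective sectionwise before sheafifying and applying $\Lambda(-)$.
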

\begin{proof}
  The functor $F\mapsto \Lambda(F)$ preserves colimits and monomorphisms. Since $P$ is regular, $X'\rightarrow X$
  is a monomorphism, so the second arrow is a monomorphism. The other part follows from (\ref{1.2}) using the fact
  that the functor preserves colimits.
\end{proof}
\begin{df}\label{2.12}
  Let $\Lambda$ be a ring. For a topology $t$ on $\mathscr{S}$, we have the sheafification functor ${\rm PSh}(\mathscr{S},\Lambda)\rightarrow {\rm Sh}(\mathscr{S},\Lambda)$ and the inclusion functor ${\rm Sh}(\mathscr{S},\Lambda):\rightarrow {\rm PSh}(\mathscr{S},\Lambda)$. We denote by
  \[a_t^*:{\rm D}({\rm PSh}(\mathscr{S},\Lambda))\rightleftarrows {\rm D}({\rm Sh}(\mathscr{S},\Lambda)):a_{t*}\]
   their derived functors.

  Let $K$ be an object of ${\rm D}({\rm PSh}(\mathscr{S},\Lambda))$. We say that $K$ is $t$-{\it local} if the
  morphism
  \[K\stackrel{ad}\longrightarrow a_{t*}a_t^*K\]
  is an isomorphism. Note that if $t'$ is another topology on $\mathscr{S}$ such that the $t$-topos is equivalent to the $t'$-topos, then $K$ is $t$-local if and only if it is $t'$-local.
\end{df}
\begin{none}\label{2.17}
  Here, under the notations and hypotheses as above, we give describe $t$-local objects using $t$-hypercovers. By the abelian version of \cite[1.2]{DHI}, we see that an object $K$ of ${\rm D}({\rm PSh}(\mathscr{S},\Lambda)$ is $t$-local if and only if the morphism
  \[K(X)\rightarrow R\varprojlim_{i\in I}K(\mathscr{X}_i)\cong K(\mathscr{X},I)\]
  is an isomorphism for any object $X$ of $\mathscr{S}$ and $t$-hypercover $(\mathscr{X},I)$ of $X$.
\end{none}
\begin{lemma}\label{2.1}
  Let $t$ be a topology on $\mathscr{S}$, and let $K\in {\rm D}({\rm PSh}(\mathscr{S},\Lambda))$ be an element. If
  $K$ is $t$-local, then
  \[K(S)\cong R{\rm  Hom}_{{\rm D}({\rm PSh}(\mathscr{S},\Lambda))}(\Lambda_t(S),K)\]
  for any $S\in {\rm ob}\,\mathscr{S}$.
\end{lemma}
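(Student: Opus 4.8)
The plan is to deduce the statement from two facts: the representable free presheaf $\Lambda(S)$ is projective, and the derived adjunction $a_t^*\dashv a_{t*}$ of (\ref{2.12}), together with the hypothesis that $K$ is $t$-local, lets one replace $\Lambda(S)$ by its $t$-sheafification $\Lambda_t(S)$ without changing the mapping object.

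First I would record the Yoneda computation ${\rm Hom}_{{\rm PSh}(\mathscr{S},\Lambda)}(\Lambda(S),F)=F(S)$, valid for every presheaf $F$ of $\Lambda$-modules because $\Lambda(S)$ is freely generated by $\rho(S)$. Since the evaluation functor $F\mapsto F(S)$ is exact, this shows $\Lambda(S)$ is projective, so at the level of complexes ${\rm Hom}^\bullet(\Lambda(S),K)$ already computes $R{\rm Hom}_{{\rm D}({\rm PSh}(\mathscr{S},\Lambda))}(\Lambda(S),K)$ and equals $K(S)$. Writing $a_t^\sharp$ for the (exact) sheafification functor and $i$ for the inclusion, I would also note that $a_t^*\Lambda(S)\cong \Lambda_t(S)$ as $t$-sheaves, the derived and underived sheafifications agreeing because $a_t^\sharp$ is exact.

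The core step is to show that the map induced by the sheafification morphism $\Lambda(S)\to\Lambda_t(S)$,
\[R{\rm Hom}_{{\rm D}({\rm PSh}(\mathscr{S},\Lambda))}(\Lambda_t(S),K)\longrightarrow R{\rm Hom}_{{\rm D}({\rm PSh}(\mathscr{S},\Lambda))}(\Lambda(S),K)\cong K(S),\]
is an isomorphism when $K$ is $t$-local. Abbreviating the subscripts ${\rm D}({\rm PSh}(\mathscr{S},\Lambda))$ and ${\rm D}({\rm Sh}(\mathscr{S},\Lambda))$ to ${\rm PSh}$ and ${\rm Sh}$, the adjunction $a_t^*\dashv a_{t*}$ gives, for any presheaf complex $F$, an isomorphism $R{\rm Hom}_{\rm PSh}(F,a_{t*}a_t^*K)\cong R{\rm Hom}_{\rm Sh}(a_t^*F,a_t^*K)$; and since $K\stackrel{\rm ad}\longrightarrow a_{t*}a_t^*K$ is an isomorphism by $t$-locality, this yields $R{\rm Hom}_{\rm PSh}(F,K)\cong R{\rm Hom}_{\rm Sh}(a_t^*F,a_t^*K)$. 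Applying this to $F=\Lambda(S)$ and to $F=\Lambda_t(S)$ (the latter viewed as a presheaf through $i$), it remains to identify $a_t^*\Lambda_t(S)$ with $a_t^*\Lambda(S)$: because $a_t^\sharp i\cong{\rm id}$ on $t$-sheaves, sheafifying the presheaf $\Lambda_t(S)=i\,a_t^\sharp\Lambda(S)$ returns $a_t^\sharp\Lambda(S)$, which is exactly $a_t^*\Lambda(S)$. Hence both sides coincide with $R{\rm Hom}_{\rm Sh}(\Lambda_t(S),a_t^*K)$, and chasing the comparison map through this chain identifies it with the asserted isomorphism $K(S)\cong R{\rm Hom}_{\rm PSh}(\Lambda_t(S),K)$.

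The step I expect to require the most care is the bookkeeping of derived versus underived functors in this last identification: the presheaf denoted $\Lambda_t(S)$ is the \emph{underived} inclusion $i\,a_t^\sharp\Lambda(S)$, which differs from $a_{t*}a_t^*\Lambda(S)=Ri\,a_t^\sharp\Lambda(S)$ (the latter also carrying the higher $t$-cohomology presheaves $U\mapsto H^n_t(U,\Lambda_t(S))$). The argument only ever applies $a_t^*$ to $\Lambda_t(S)$, where this distinction collapses via $a_t^\sharp i\cong{\rm id}$, so no hypercover analysis as in (\ref{2.17}) is needed; one just has to ensure the chain of adjunction isomorphisms is the one induced by $\Lambda(S)\to\Lambda_t(S)$ so that the final identification is natural.
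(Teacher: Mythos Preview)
Your proposal is correct and follows essentially the same approach as the paper's proof: the paper simply invokes $K(S)\cong R{\rm Hom}(\Lambda(S),K)$ and the fact that $a_t^*\Lambda(S)\cong a_t^*\Lambda_t(S)$, which is exactly the Yoneda-plus-adjunction argument you spell out in detail. Your additional care about derived versus underived inclusion is a valid elaboration of what the paper leaves implicit.
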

\begin{proof}
  Since $K(S)\cong R{\rm Hom}_{{\rm D}({\rm PSh}(\mathscr{S},\Lambda))}(\Lambda(S),K)$, it follows from the fact that $a_t^*\Lambda(S)\cong a_t^*\Lambda_t(S)$.
\end{proof}
\begin{lemma}\label{2.2}
  Let $\Lambda$ be a ring, let $t$ be a topology on $\mathscr{S}$, and let $K\in {\rm D}({\rm PSh}(\mathscr{S},\Lambda))$ be an element. If
  $K$ is $t$-local, then
  \[K(S)^G\cong R{\rm  Hom}_{{\rm D}({\rm PSh}(\mathscr{S},\Lambda))}(\Lambda_t(X)_G,K)\]
  for any $S\in {\rm ob}\,\mathscr{S}$ with an action of a group $G$.
\end{lemma}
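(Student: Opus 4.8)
The plan is to imitate the proof of (\ref{2.1}) verbatim, with the representable generator $\Lambda(S)$ replaced by its coinvariants $\Lambda(S)_G$ and the evaluation $K(S)$ replaced by its invariants $K(S)^G$. Concretely, I would first establish the untopologized identity
\[R{\rm Hom}_{{\rm D}({\rm PSh}(\mathscr{S},\Lambda))}(\Lambda(S)_G,K)\cong K(S)^G\]
for \emph{every} $K$ (no locality needed), then record the sheaf-level input $a_t^*\Lambda(S)_G\cong a_t^*\Lambda_t(S)_G$, and finally splice the two together along the sheafification adjunction using $t$-locality, exactly as in (\ref{2.1}).

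For the untopologized identity I would use the projector description of $(-)^G$ from (\ref{5.2.1}), valid since we work with ${\bf Q}$-coefficients and finite $G$. The averaging operator $p=\frac{1}{|G|}\sum_{g\in G}\sigma^g$ is an idempotent on $\Lambda(S)$ realizing $\Lambda(S)_G$ as a direct summand (invariants and coinvariants agreeing in the ${\bf Q}$-linear finite-group setting). Since $R{\rm Hom}_{{\rm D}({\rm PSh}(\mathscr{S},\Lambda))}(\Lambda(S),K)\cong K(S)$ as recalled in the proof of (\ref{2.1}), and since the additive contravariant functor $R{\rm Hom}(-,K)$ carries the idempotent $p$ to the averaging idempotent $p_{K(S)}$ of (\ref{5.2.1}) on $K(S)$, it carries the summand $\Lambda(S)_G$ onto the image of $p_{K(S)}$, which by (\ref{5.2}) is precisely $K(S)^G$. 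Conceptually this is just the adjunction $u_\sharp\dashv u^*$ for the functor $u:{\bf e}_G\rightarrow\mathscr{S}$ encoding the action, under which $\Lambda(S)_G\cong u_\sharp{\bf 1}$ and $R{\rm Hom}({\bf 1},u^*K)=f_*u^*K=K(S)^G$ for $f:{\bf e}_G\rightarrow{\bf e}$; the projector is the concrete incarnation that makes everything land in degree zero.

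The sheaf-level input is then immediate from the fact $a_t^*\Lambda(S)\cong a_t^*\Lambda_t(S)$ already used in (\ref{2.1}): the averaging formula for $p$ is preserved by the exact functor $a_t^*$, so $a_t^*$ commutes with the formation of $(-)_G$ (a direct summand here), and applying $(-)_G$ to that isomorphism gives $a_t^*\Lambda(S)_G\cong a_t^*\Lambda_t(S)_G$.

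Finally, since $K$ is $t$-local we have $K\cong a_{t*}a_t^*K$, whence by the adjunction $a_t^*\dashv a_{t*}$
\[R{\rm Hom}(\Lambda_t(S)_G,K)\cong R{\rm Hom}(a_t^*\Lambda_t(S)_G,a_t^*K)\cong R{\rm Hom}(a_t^*\Lambda(S)_G,a_t^*K)\cong R{\rm Hom}(\Lambda(S)_G,K)\cong K(S)^G,\]
the middle isomorphism being the sheaf-level input, the outer ones the adjunction together with $t$-locality, and the last the untopologized identity. The only genuine obstacle, and the single place where the coefficient hypotheses are indispensable, is that untopologized identity: one must know that replacing $\Lambda(S)$ by its coinvariants is reflected by replacing $K(S)$ by its invariants with \emph{no} higher derived correction. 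This is exactly what fails for a general ring, and is why the projector of (\ref{5.2.1}) — hence the assumption that $\Lambda$ is a ${\bf Q}$-algebra and $G$ is finite — is essential; everything surrounding it is the formal adjunction bookkeeping of (\ref{2.1}).
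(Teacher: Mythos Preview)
Your proof is correct and follows essentially the same two-step structure as the paper's: first reduce $\Lambda_t(S)_G$ to $\Lambda(S)_G$ via $a_t^*\Lambda(S)_G\cong a_t^*\Lambda_t(S)_G$, then identify $R{\rm Hom}(\Lambda(S)_G,K)$ with $K(S)^G$. For the second step the paper argues purely through the adjunction $f_\sharp\dashv f^*$ for $f:{\bf e}_G\rightarrow\mathscr{S}$ (writing $\Lambda(S)_G\cong f_\sharp f^*{\bf 1}$ and then passing to $R{\rm Hom}_{{\rm D}({\rm PSh}({\bf e}_G,\Lambda))}({\bf 1},K(S))\cong K(S)^G$), which you already record as the conceptual version of your projector argument; the two are equivalent under the ${\bf Q}$-algebra and finite-$G$ hypotheses you correctly make explicit.
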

\begin{proof}
  Since $a_t^*(\Lambda(S)_G)\cong a_t^*(\Lambda_t(S)_G)$, we have that
  \[R{\rm  Hom}_{{\rm D}({\rm PSh}(\mathscr{S},\Lambda))}(\Lambda_t(X)_G,K)\cong R{\rm  Hom}_{{\rm D}({\rm PSh}(\mathscr{S},\Lambda))}(\Lambda(X)_G,K).\]
  Consider the functor $f:{\bf e}_G\rightarrow \mathscr{S}$ induced by the $G$-action. Then
  \[\begin{split}
    R{\rm  Hom}_{{\rm D}({\rm PSh}(\mathscr{S},\Lambda))}(\Lambda(X)_G,K)&\cong R{\rm Hom}_{{\rm D}({\rm PSh}(\mathscr{S},\Lambda))}(f_\sharp f^*{\bf 1},K)\\
    &\cong R{\rm Hom}_{{\rm D}({\rm PSh}({\bf e}_G,\Lambda))}({\bf 1},K(S))\\
    &\cong K(S)^G.
  \end{split}\]
  The conclusion follows from these.
\end{proof}
\begin{prop}\label{2.13}
  Let $P$ be a regular ecd-structure on $\mathscr{S}$, and let $K\in {\rm D}({\rm PSh}(\mathscr{S},{\bf Q}))$ be an element. Then for each distinguished square of group $G$ of the form {\rm (\ref{5.3.2})}, there is a
  homomorphism
  \[\partial_{(G,C)}:H_{t_P}^n(X',K)^G\rightarrow H_{t_P}^{n+1}(S,K)\]
  of ${\bf Q}$-modules for any $n$ such that
  \begin{enumerate}[{\rm (i)}]
    \item for any morphism $(G,C_1)\rightarrow (G,C)$ of distinguished squares of group $G$, the diagram
    \[\begin{tikzcd}
      H_{t_P}^n(X',K)^G\arrow[r,"\partial_{(G,C)}"]\arrow[d]&H_{t_P}^n(S,K)\arrow[d]\\
      H_{t_P}^n(X_1',K)^G\arrow[r,"\partial_{(G,C)}"]&H_{t_P}^n(S,K)
    \end{tikzcd}\]
    commutes,
    \item the sequence of ${\bf Q}$-modules
    \[H^n(S,K)\longrightarrow H^n(X,K)^G\oplus H^n(S',K)\longrightarrow
    H^n(X',K)^G\stackrel{\partial_{(G,C)}}\longrightarrow H^{n+1}(S,K)\]
    is exact.
  \end{enumerate}
\end{prop}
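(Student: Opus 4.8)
The plan is to produce both $\partial_{(G,C)}$ and the exact sequence in (ii) as the long exact cohomology sequence of a distinguished triangle obtained by dualizing the short exact sequence of (\ref{1.3}). First I would replace $K$ by its $t_P$-localization $K'=a_{t_P*}a_{t_P}^*K$ (see (\ref{2.12})), which is $t_P$-local by construction. The point of this replacement is that all the cohomology groups appearing in the statement are computed by $K'$: using the adjunction between $a_{t_P}^*$ and $a_{t_P*}$ together with the isomorphism $a_{t_P}^*{\bf Q}(S)\cong a_{t_P}^*{\bf Q}_{t_P}(S)$ from the proof of (\ref{2.1}), one gets $H_{t_P}^n(S,K)\cong H^n R{\rm Hom}_{\dps}({\bf Q}_{t_P}(S),K')$, and likewise $H_{t_P}^n(X,K)^G$ is computed by $R{\rm Hom}_{\dps}({\bf Q}_{t_P}(X)_G,K')$ after applying (\ref{2.1}) and (\ref{2.2}) to the $t_P$-local object $K'$.

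Next, since $P$ is regular, (\ref{1.3}) with $\Lambda={\bf Q}$ furnishes the short exact sequence
\[0\longrightarrow {\bf Q}_{t_P}(X')_G\longrightarrow {\bf Q}_{t_P}(X)_G\oplus {\bf Q}_{t_P}(S')\longrightarrow {\bf Q}_{t_P}(S)\longrightarrow 0\]
in ${\rm Sh}_{t_P}(\mathscr{S},{\bf Q})$, hence a distinguished triangle in $\ds$. I would apply the contravariant functor $R{\rm Hom}(-,a_{t_P}^*K)$ to this triangle and transport the result to $\dps$ via adjunction, obtaining a distinguished triangle with vertices $R{\rm Hom}({\bf Q}_{t_P}(S),K')$, $R{\rm Hom}({\bf Q}_{t_P}(X)_G,K')\oplus R{\rm Hom}({\bf Q}_{t_P}(S'),K')$, and $R{\rm Hom}({\bf Q}_{t_P}(X')_G,K')$. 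By (\ref{2.1}) and (\ref{2.2}) these are identified with $K'(S)$, $K'(X)^G\oplus K'(S')$, and $K'(X')^G$. Passing to the long exact cohomology sequence and letting $\partial_{(G,C)}$ be its connecting homomorphism then yields precisely the exact sequence of (ii), once one knows that $H^n(K'(X)^G)\cong(H^nK'(X))^G=H_{t_P}^n(X,K)^G$ and similarly for $X'$.

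The naturality in (i) would follow from the functoriality of (\ref{1.3}) in the distinguished square: a morphism $(G,C_1)\rightarrow(G,C)$ induces $G$-equivariant morphisms on all four corners, hence a morphism between the two short exact sequences above, hence a morphism of the corresponding triangles. Applying $R{\rm Hom}(-,K')$ and passing to long exact sequences produces a commutative ladder, and the naturality of the connecting homomorphism gives the required commuting square.

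The main obstacle is not the homological bookkeeping, which is formal, but rather justifying the last identification in the second paragraph, namely that the invariants functor $(-)^G$ commutes with $H^n$. This is exactly where the restriction to ${\bf Q}$-coefficients and to finite $G$ (cf.\ (\ref{0.6})) is essential: by (\ref{5.2.1}) the functor $(-)^G$ is the image of the averaging projector and hence an exact direct-summand functor, so it commutes with cohomology. Equivalently, one may read $H_{t_P}^n(X,K)^G$ as the cohomology of the derived invariants $K'(X)^G$, in which case no commutation is needed, but the two readings agree in this setting. I would also take care that the localization $K'$ correctly computes $t_P$-cohomology, which is the content of the first paragraph.
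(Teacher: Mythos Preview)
Your proposal is correct and follows essentially the same route as the paper: replace $K$ by its $t_P$-localization $K'=a_{t_P*}a_{t_P}^*K$, use (\ref{2.1}) and (\ref{2.2}) to identify the relevant cohomology groups with $H^n R{\rm Hom}({\bf Q}_{t_P}(-),K')$ and $H^n R{\rm Hom}({\bf Q}_{t_P}(-)_G,K')$, and then feed the short exact sequence of (\ref{1.3}) into $R{\rm Hom}(-,K')$ to obtain the long exact sequence and its connecting map. Your explicit discussion of why $(-)^G$ commutes with $H^n$ via the projector (\ref{5.2.1}) is exactly the point the paper singles out, and your treatment of naturality in (i) spells out what the paper leaves implicit.
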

\begin{proof}
  We put $K'=a_{t*}a_t^*F$. Then by (\ref{2.1}),
  \[H_t^n(Y,K)\cong H^n(K'(Y))\cong H^n(R{\rm Hom}({\bf Q}_t(Y),K'))\]
   for any object $Y$ of $\mathscr{S}$. When $Y$ has a $G$-action, by (\ref{2.2}),
   \[H_t^n(Y,K)^G\cong H^n(K'(Y))^G\cong H^n(K'(Y)^G)\cong H^n(R{\rm Hom}({\bf Q}_{t}(Y)_G,K')).\]
   Here, the second isomorphism comes from the fact that $K'(Y)^G$ is given by the projector (\ref{5.2.1}). The conclusion follows
   from these and (\ref{1.3}).
\end{proof}
\section{Descent theory for complexes of presheaves}
\begin{none}\label{2.9}
  In (\ref{2.10}) and (\ref{2.4}), we will have the following notations and hypotheses. Let $P$ be a complete ecd-structure on
  $\mathscr{S}$ bounded with respect to a density structure $D$, and we put $t=t_P$. For $n\geq 0$, let $T^n$ be presheaves of abelian groups on $\mathscr{S}$, and for $(G,C)\in P$ of the form {\rm (\ref{5.3.2})}, let $\partial_{(G,C)}:T^n(X')\rightarrow T^{n+1}(S)$
  be functions. Consider the following conditions.
  \begin{enumerate}
    \item[(i)] $\partial_{(G,C)}$ is natural with respect to morphisms of distinguished squares of group $G$.
    \item[(ii)] The sequence
    \[T^{n-1}(X')^G\rightarrow T^n(S)\rightarrow T^{n}(S')\times T^n(X)^G\]
    is exact for any $n\geq 0$ {\rm (}put $T^{-1}=0${\rm )}.
    \item[(iii)] $T^0(\emptyset)=0$.
    \item[(iii)'] $T^n(\emptyset)=0$ for any $n\geq 0$.
    \item[(iv)] $a_{t*}T^n=0$ for any $n\geq 1$.
    \item[(iv)'] $a_{t*}T^n=0$ for any $n\geq 0$.
  \end{enumerate}
\end{none}
\begin{thm}\label{2.10}
  Under the notations and hypotheses of {\rm (\ref{2.9})}, assume {\rm (i), (ii), (iii)}, and {\rm (iv)}. Then for
  any object $S$ of $\mathscr{S}$,
  \[T^n(S)=0\]
  for any $n>{\rm dim}_DS$.
\end{thm}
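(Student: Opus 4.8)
The plan is to argue by induction on $d=\dim_D S$, the engine being the Mayer--Vietoris exact sequence (ii) of (\ref{2.9}) together with the reducing refinements furnished by boundedness. First I would record the two structural inputs. Since $n>\dim_D S\ge 0$ we have $n\ge 1$, so condition (iv) gives $a_{t*}T^n=0$; hence every section $s\in T^n(S)$ dies on some $t$-cover of $S$, and when $S\neq\emptyset$ completeness of $P$ (see (\ref{5.4})) lets me refine that cover to a sieve generated by a $P$-simple cover. Thus it suffices to show: if $s\in T^n(S)$ with $n>\dim_D S$ vanishes on a $P$-simple cover, then $s=0$. The base case $d=0$ forces $S\cong\emptyset$ (because $(\emptyset\to S)\in D_0(S)$ is then an isomorphism), where $T^0(\emptyset)=0$ is (iii) and the vanishing of $T^n(\emptyset)$ for $n\ge 1$ follows since the empty sieve covers $\emptyset$ and the covering systems over the initial object are empty; I would treat the initial object as a separate, routine base step.

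For the inductive step I would run a secondary induction along the density filtration. Call $s$ \emph{trivial at level $i$} if $s|_U=0$ for some $(U\to S)\in D_i(S)$. Since $(\emptyset\to S)\in D_0(S)$ and $T^n(\emptyset)=0$, every $s$ is trivial at level $0$; and since $D_d(S)$ consists of isomorphisms, triviality at level $d$ gives $s=0$. The content is therefore the implication ``trivial at level $i$ $\Rightarrow$ trivial at level $i+1$'' for $i<d$. Here I feed a reducing refinement of the distinguished square underlying the given $P$-simple cover (definition (\ref{5.7})) into the exact sequence (ii): choosing $X_0\in D_{i+1}(X)$, $S_0'\in D_{i+1}(S')$, $X_0'\in D_i(X')$ adapted to the level-$i$ witness, the reducing property produces a square $C_1$ of group $G$ mapping to $C$ with $S_1\in D_{i+1}(S)$ and corners factoring through $X_0,S_0',X_0'$. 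Restricting $s$ to $S_1$ and applying (ii) to $C_1$, the vanishing of $s$ on the pieces of the simple cover places $s|_{S_1}$ in the kernel of $T^n(S_1)\to T^n(S_1')\times T^n(X_1)^G$, hence in the image of $\partial_{(G,C_1)}\colon T^{n-1}(X_1')^G\to T^n(S_1)$, with naturality (i) ensuring compatibility with $C_1\to C$.

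The decisive point is a dimension count on the connecting term. By construction $X_1'$ is reached through $X_0'\in D_i(X')$, and the level shift built into the reducing condition is arranged so that $\dim_D X_1'<d$; since then $n-1\ge d>\dim_D X_1'$, the primary inductive hypothesis gives $T^{n-1}(X_1')=0$, and its $G$-invariants vanish as well. Therefore $\partial_{(G,C_1)}$ is zero, so $s|_{S_1}=0$ with $S_1\in D_{i+1}(S)$, i.e.\ $s$ is trivial at level $i+1$. Iterating from level $0$ to level $d$ yields $s=0$. Throughout, the exactness of the invariants functor $(-)^G$ is essential: for finite $G$ over a {\bf Q}-algebra it is the image of the averaging projector (\ref{5.2.1}) and hence exact, so (ii) stays exact after taking invariants and the vanishing above propagates---this is exactly why {\bf Q}-coefficients are required (Remark (\ref{0.6})).

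I expect the main obstacle to be the bookkeeping in this level-raising step: verifying that the reducing refinement can be chosen compatibly with the level-$i$ witness, that the resulting $S_1$ genuinely lands in $D_{i+1}(S)$ while the upper-left corner drops so that $\dim_D X_1'<d$, and that a $P$-simple cover of positive length (an iteration of squares rather than a single one) is handled by an auxiliary induction on its length, peeling off one distinguished square at a time via (ii). Synchronizing the two inductions---the primary one on $d$ and the secondary one on the density level $i$---so that the connecting term always lands strictly below the primary threshold is the delicate part; once that matching is in place, conditions (i)--(iv) combine formally to force $T^n(S)=0$ for $n>\dim_D S$.
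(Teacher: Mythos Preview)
There is a genuine gap at your ``decisive point.'' You assert that ``the level shift built into the reducing condition is arranged so that $\dim_D X_1'<d$,'' but Definition~(\ref{5.7})(4) gives no such bound: it says only that $S_1\in D_{i+1}(S)$ and that $X_1'\to X'$ factors through the chosen $X_0'\in D_i(X')$. Membership in $D_i(X')$ imposes no drop in $D$-dimension (indeed $X'$ itself lies in every $D_i(X')$), and nothing in the axioms relates $\dim_D X'$ or $\dim_D X_1'$ to $\dim_D S$ at all. So your primary induction on $d=\dim_D S$ cannot be invoked to conclude $T^{n-1}(X_1')=0$, and the argument stalls precisely where it needs to fire. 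Your phrase ``adapted to the level-$i$ witness'' also hides a problem: a witness $U\in D_i(S)$ with $s|_U=0$ does not by itself produce $X_0\in D_{i+1}(X)$ or $S_0'\in D_{i+1}(S')$; in the paper these come from a separate source.

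The paper's proof avoids this by inducting on $n$ rather than on $d$, with the density index \emph{locked to} $n$. The statement $(A_n)$ proved is: every section of $T^n$ over any object $S$ dies on some member of $D_n(S)$. An inner induction on the length of the simple cover produces $X_0\in D_n(X)$ and $S_0'\in D_n(S')$ on which $a$ vanishes; taking $X_0'=X'\in D_{n-1}(X')$ and applying the reducing refinement yields $C_1$ with $S_1\in D_n(S)$. Exactness (ii) then gives $a|_{S_1}=\partial_{(G,C_1)}(b_1)$ with $b_1\in T^{n-1}(X_1')^G$, and now the inductive hypothesis $(A_{n-1})$---which applies to $X_1'$ \emph{regardless of its dimension}---furnishes $X_2'\in D_{n-1}(X_1')$ killing $b_1$; a second reducing refinement gives $S_3\in D_n(S)$ with $a|_{S_3}=0$. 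The index shift in the reducing condition ($i+1$ on three corners, $i$ on $X'$) is calibrated exactly to this coupling of density level with cohomological degree; your decoupling of the level $i$ from $n$ breaks that calibration and cannot be repaired without reverting to the induction on $n$.
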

\begin{proof}
  We repeat the proof of \cite[2.27]{Voe10a} with minor changes. Refining $P$, we may assume that $P$ contains only
  reducing distinguished squares. For $n\geq 0$, consider the following statement.
  \begin{enumerate}
    \item[$(A_n)$] For any $S\in {\rm ob}\,\mathscr{S}$ and $a\in T^n(S)$, there exists $(j:U\rightarrow S)\in
        D_n(S)$ such that $j^*(a)=0$.
  \end{enumerate}
  We only need to prove $(A_n)$ for all $n\geq 0$. Let us prove it by induction.

  Since $(\emptyset\rightarrow X)\in D_0(S)$ and $T^0(\emptyset)=0$, $(A_0)$ holds. When $n>0$, assume $(A_{n-1})$.
  Let $S$ be an object of $\mathscr{S}$, and let $a\in T^n(S)$ be an element. By (iv), there exists a $t_P$-cover
  $\{j_i:U_i\rightarrow S\}_{i\in I}$ such that $T^n(j_i)(a)=0$ for all $i$. Since $P$ is complete, we may assume
  that the $t_P$-cover is a simple cover. Using the induction on the number of distinguished squares used in the
  cover, we may assume that for some $(G,C)\in P$ of the form (\ref{5.3.2}), there exist $X_0\in D_n(X)$ and
  $S_0'\in D_n(S')$ such that the images of $a$ in $T^n(X_0)$ and $T^n(S_0')$ are 0.

  Since $X'\in D_{n-1}(X')$ and $(G,C)$ is reducing, there exist a distinguished square
  \[C_1=
  \begin{tikzcd}
    X_1'\arrow[r]\arrow[d]&X_1\arrow[d]\\
    S_1'\arrow[r]&S_1
  \end{tikzcd}\]
  of group $G$ and a morphism $(G,C_1)\rightarrow (G,C)$ of distinguished squares of group $G$ such that $S_1\in
  D_n(S)$ and that the images of $a$ in $T^n(X_1)$ and $T^n(S_1')$ are $0$. Let $a_1$ be the image of $a$ in
  $T^n(S_1)$. Then by (ii), $a_1=\partial_{(G,C_1)}b_1$ for some $b_1\in T^{n-1}(X_1')^G$. By $(A_{n-1})$, there
  exist $X_2'\in D_{n-1}(X_1')$ such that the image of $b_1$ in $H^{n-1}(X_2')^G$ is $0$.

  Since $X_1\in D_n(X_1)$, $S_1'\in D_n(S_1')$, and $(G,C)$ is reducing, there exists a distinguished square
  \[C_3=
  \begin{tikzcd}
    X_3'\arrow[r]\arrow[d]&X_3\arrow[d]\\
    S_3'\arrow[r]&S_3
  \end{tikzcd}\]
  of group $G$ and a morphism $(G,C_3)\rightarrow (G,C_1)$ of distinguished squares of group $G$ such that $S_3\in
  D_n(S_1)$ and that the image of $b_1$ in $T^{n-1}(X_3')^G$ is $0$. Note that $S_3\in D_n(S)$ since $S_3\in D_n(S_1)$ and $S_1\in D_n(S)$. Then by (i), the image of $a$ in $T^n(S_3)$ is
  $\partial_{(G,C_3)}0=0$, which proves $(A_n)$ since $S_3\in D_n(S)$.
\end{proof}
\begin{thm}\label{2.4}
  Under the notations and hypotheses of {\rm (\ref{2.9})}, assume {\rm (i), (ii), (iii)'}, and {\rm (iv)'}. Then
  $T^n=0$ for any $n\geq 0$.
\end{thm}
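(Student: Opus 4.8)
The plan is to reduce everything to the already-established Theorem~(\ref{2.10}) by a degree-shifting trick, taking advantage of the fact that the strengthened hypotheses (iii)$'$ and (iv)$'$ assert vanishing in \emph{all} degrees (including degree $0$), whereas (iii) and (iv) only concern degrees $\ge 1$. The point is that (\ref{2.10}) gives $T^n(S)=0$ merely for $n>\dim_D S$, so for each fixed $S$ there is a gap in the low degrees $0\le n\le\dim_D S$; since $P$ is bounded this gap is finite, and shifting the whole system up by a large amount converts the low-degree values into high-degree values to which (\ref{2.10}) applies.

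Concretely, for each integer $k\ge 0$ I would introduce the shifted system $T_{(k)}$ defined by $T_{(k)}^n=T^{n-k}$ for $n\ge k$ and $T_{(k)}^n=0$ for $0\le n<k$, with connecting maps $\partial_{(G,C)}$ inherited from those of $T$ in the appropriate degrees (and zero in the degenerate range). First I would check that $T_{(k)}$ satisfies the hypotheses (i), (ii), (iii), (iv) of (\ref{2.9}), i.e.\ exactly the input needed for (\ref{2.10}). Condition (i) is immediate, and (iii) holds because $T_{(k)}^0=0$ when $k\ge1$ (and is (iii)$'$ itself when $k=0$). Condition (iv) for $T_{(k)}$ reads $a_{t*}T^{n-k}=0$ for $n\ge1$, which is precisely the content of (iv)$'$ for $T$; this is the step where the full degree-$0$ vanishing of (iv)$'$ is indispensable. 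The only delicate verification is (ii): for $n>k$ it is just (ii) for $T$ in degree $n-k\ge1$, for $n<k$ all terms vanish, and at the boundary $n=k$ the sequence becomes $0\to T^0(S)\to T^0(S')\times T^0(X)^G$, whose exactness is the injectivity furnished by the $n=0$ instance of (ii) for $T$ (with $T^{-1}=0$).

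Granting this, (\ref{2.10}) applied to $T_{(k)}$ yields $T_{(k)}^n(S)=0$ for all $n>\dim_D S$, that is $T^{n-k}(S)=0$ whenever $n\ge k$ and $n>\dim_D S$. To finish, I would fix an arbitrary object $S$ and an arbitrary degree $m\ge0$; since $P$ is bounded, $\dim_D S<\infty$, so I may choose $k=\dim_D S+1$. Then $n:=m+k$ satisfies both $n\ge k$ and $n>\dim_D S$, whence $T^m(S)=T_{(k)}^{\,m+k}(S)=0$. As $S$ and $m$ were arbitrary, $T^m=0$ for all $m\ge0$, which is the assertion.

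The main obstacle is conceptual rather than computational: recognizing that the strengthening from (iii),(iv) to (iii)$'$,(iv)$'$ is exactly a one-degree-of-freedom gain that can be cashed in by an arbitrary upward shift, and that finiteness of $\dim_D S$ then lets a single large shift reach every low degree. The one technical subtlety requiring care is preserving exactness~(ii) across the shift at the boundary degree $n=k$, which is why the hypothesis (ii) must be assumed for all $n\ge0$ and not merely for $n\ge1$.
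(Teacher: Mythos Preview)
Your argument is correct and gives a genuinely different proof from the paper's. The paper does \emph{not} reduce to (\ref{2.10}); instead it rewrites the inductive proof of (\ref{2.10}) from scratch, replacing the statement $(A_n)$ (``for every $a\in T^n(S)$ there exists $U\in D_n(S)$ killing $a$'') by the uniform-in-$n$ statement $(B_d)$ (``for every $n\ge 0$ and $a\in T^n(S)$ there exists $U\in D_d(S)$ killing $a$''), and inducts on $d$ rather than on $n$. The hypotheses (iii)$'$ and (iv)$'$ enter exactly where (iii) and (iv) did in (\ref{2.10}), now for all $n$ simultaneously; the conclusion then follows because $D_d(S)$ consists of isomorphisms once $d>\dim_D S$.

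Your shift trick is more economical: it packages the strengthened hypotheses as invariance of the system under translation in the $n$-direction, and cashes this in by a single application of (\ref{2.10}) rather than a parallel induction. The paper's route has the minor advantage of being self-contained and of making the role of the density bound visible in the same way in both theorems. Your verification of condition~(ii) at the boundary degree $n=k$ (injectivity of $T^0(S)\to T^0(S')\times T^0(X)^G$ from the $n=0$ case of (ii) with $T^{-1}=0$) is the only genuinely nontrivial check, and you have it right.
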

\begin{proof}
  We repeat the proof of \cite[3.2]{Voe10a} with minor changes. Refining $P$, we may assume that $P$ contains only
  reducing distinguished squares. For $d\geq 0$, consider the following statement.
  \begin{enumerate}
    \item[($B_d$)] For any $S\in {\rm ob}\,\mathscr{S}$, $n\geq 0$, and $a\in T^n(S)$, there exists
        $(j:U\rightarrow S)\in D_d(S)$ such that $T^n(j)(a)=0$.
  \end{enumerate}
  Let us prove $(B_d)$ by induction on $d$.

  Since $(\emptyset\rightarrow X)\in D_0(S)$ and $T^n(\emptyset)=0$, $(B_0)$ holds. When $d>0$, assume $(B_{d-1})$.
  Let $S$ be an object of $\mathscr{S}$, and let $a\in T^n(S)$ be an element. By (iv)', there exists a $t_P$-cover
  $\{j_i:U_i\rightarrow S\}_{i\in I}$ such that $T^n(j_i)(a)=0$ for all $i$. Since $P$ is complete, we may assume
  that the $t_P$-cover is a simple cover. Using the induction on the number of distinguished squares used in the
  cover, we may assume that for some $(G,C)\in P$ of the form (\ref{5.3.2}), there exist $S_0'\in D_{d}(S')$ and
  $X_0\in D_{d}(X)$ such that the images of $a$ in $T^n(X_0)$ and $T^n(S_0')$ are $0$.

  Since $X'\in D_{d-1}(X')$ and $(G,C)$ is reducing, there exist a distinguished square
  \[C_1=
  \begin{tikzcd}
    X_1'\arrow[r]\arrow[d]&X_1\arrow[d]\\
    S_1'\arrow[r]&S_1
  \end{tikzcd}\]
  of group $G$ and a morphism $(G,C_1)\rightarrow (G,C)$ of distinguished squares of group $G$ such that $S_1\in
  D_d(S)$ and that the images of $a$ in $T^n(X_1)$ and $T^n(S_1')$ are $0$. Let $a_1$ be the image of $a$ in
  $T^n(S_1)$. Then by (ii), $a_1=\partial_{(G,C_1)} b_1$ for some $b_1\in T^{n-1}(X_1')^G$. By $(B_{d-1})$, there
  exists $X_2'\in D_{d-1}(X_1')$ such that the image of $b_1$ in $T^{n-1}(X_2')^G$ is $0$.

  Since $X_1\in D_d(X_1)$, $S_1'\in D_d(S_1')$, and $(G,C)$ is reducing, there exists a distinguished square
  \[C_3=
  \begin{tikzcd}
    X_3'\arrow[r]\arrow[d]&X_3\arrow[d]\\
    S_3'\arrow[r]&S_3
  \end{tikzcd}\]
  of group $G$ and a morphism $(G,C_3)\rightarrow (G,C_1)$ of distinguished squares of group $G$ such that $S_3\in
  D_d(S_1)$ and that the image of $b_1$ in $T^{n-1}(X_3')^G$ is $0$. Note that $S_3\in D_d(S)$ since $S_3\in D_d(S_1)$ and $S_1\in D_d(S)$. Then by (i), the image of $a$ in $T^n(S_3)$ is
  $\partial_{(G,C_3)}0=0$, which proves $(B_{d})$ since $S_3\in D_d(S)$.

  For large $d$, $D_d(S)$ contains only isomorphisms, so this shows that $a=0$. Thus $T^n=0$ for any $n\geq 0$.
\end{proof}
\begin{thm}\label{2.14}
  Let $P$ be a complete and regular ecd-structure bounded with respect to a density structure $D$, and let $F$ be a $t_P$-sheaf of ${\bf Q}$-modules on $\mathscr{S}$.
  Then
  \[H_{t_P}^n(S,F)=0\]
  for any $S\in {\rm ob}\,\mathscr{S}$ and $n>{\rm dim}_DS$.
\end{thm}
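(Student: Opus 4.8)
The plan is to deduce the statement from Theorem (\ref{2.10}) by feeding it the higher cohomology presheaves. Put $t=t_P$, and for $n\geq 0$ and $S\in{\rm ob}\,\mathscr{S}$ set
\[
T^n(S):=H^n_{t_P}(S,F),
\]
viewed as a presheaf of ${\bf Q}$-modules on $\mathscr{S}$ (contravariantly functorial in $S$). I would regard $F$ as an object $K$ of ${\rm D}({\rm PSh}(\mathscr{S},{\bf Q}))$ concentrated in degree $0$ and set $K'=a_{t*}a_t^*K$, so that $T^n(S)\cong H^n(K'(S))$ by (\ref{2.1}). For each $(G,C)\in P$ of the form (\ref{5.3.2}) I take $\partial_{(G,C)}$ to be the boundary homomorphism supplied by (\ref{2.13}) on the $G$-invariants (extended to all of $T^n(X')$ by the projector (\ref{5.2.1}), which exists since we work over ${\bf Q}$; only invariant classes are ever used anyway, as in the proof of (\ref{2.10})). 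The hypotheses that $P$ is complete, bounded with respect to $D$, and that $\dim_DS$ is finite for all $S$ are exactly the standing hypotheses of (\ref{2.9}), so it remains to check conditions (i)--(iv).

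Conditions (i), (ii), and (iii) are essentially bookkeeping. Condition (i), naturality of $\partial_{(G,C)}$ in morphisms of distinguished squares of group $G$, is precisely (\ref{2.13})(i). For condition (ii), recall that (\ref{2.13}) is obtained by applying $R{\rm Hom}(-,K')$ to the short exact sequence of $t$-sheaves (\ref{1.3}) and rewriting the terms via (\ref{2.1}) and (\ref{2.2}); since $R{\rm Hom}(-,K')$ is a (contravariant) triangulated functor, this yields a genuine long exact sequence
\[
\cdots\to T^{n-1}(X')^G\xrightarrow{\ \partial_{(G,C)}\ }T^n(S)\to T^n(X)^G\oplus T^n(S')\to T^n(X')^G\to\cdots,
\]
exact at every term. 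Condition (ii) of (\ref{2.9}) is exactly the assertion that this sequence is exact at $T^n(S)$. Condition (iii) holds because $H^0_{t_P}(\emptyset,F)=F(\emptyset)=0$: every object admits the empty covering sieve for the $t_\emptyset$-topology, which $t_P$ refines, so $F$ is a $t_\emptyset$-sheaf and $F(\emptyset)$ is the zero ${\bf Q}$-module by (\ref{5.1}).

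The one genuinely non-formal point is condition (iv), the vanishing of the sheafified higher cohomology presheaves. Here I would argue that, since the sheafification functor $a_t^*$ is exact and hence commutes with taking cohomology objects,
\[
a_t^*T^n=a_t^*\,\mathcal{H}^n(K')=\mathcal{H}^n(a_t^*K')\cong\mathcal{H}^n(a_t^*K),
\]
where the last isomorphism uses $K'=a_{t*}a_t^*K$ together with $a_t^*a_{t*}\cong{\rm id}$. As $F$ is a $t_P$-sheaf, $a_t^*K\cong F$ is concentrated in degree $0$, so $\mathcal{H}^n(a_t^*K)=0$ for all $n\geq 1$; thus $a_t^*T^n=0$ for $n\geq 1$, which is condition (iv).

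With (i)--(iv) verified, Theorem (\ref{2.10}) applies and gives $T^n(S)=H^n_{t_P}(S,F)=0$ for every $S$ and every $n>\dim_DS$, as claimed. The reductions above are formal consequences of the earlier results; I expect the only delicate points to be matching the long exact sequence of (\ref{2.13}) against the three-term exactness (ii) demanded by (\ref{2.9}) (which works because that sequence comes from the short exact sequence (\ref{1.3})) and keeping the $G$-invariance conventions for $\partial_{(G,C)}$ consistent with the framework of (\ref{2.9}).
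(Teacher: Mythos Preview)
Your proposal is correct and follows essentially the same approach as the paper: set $T^n(-)=H_{t_P}^n(-,F)$, invoke (\ref{2.13}) for conditions (i) and (ii) of (\ref{2.9}), note that (iii) is trivial since $F(\emptyset)=0$, verify (iv) by the standard fact that the cohomology presheaves sheafify to zero in positive degrees, and conclude via (\ref{2.10}). Your treatment is simply more detailed than the paper's, in particular your explicit handling of the domain mismatch for $\partial_{(G,C)}$ (defined on $T^n(X')^G$ in (\ref{2.13}) but required on all of $T^n(X')$ in (\ref{2.9})) via the projector, and your computation $a_t^*T^n\cong\mathcal{H}^n(a_t^*K)=0$ for $n\geq 1$; both are correct and fill in what the paper leaves implicit.
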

\begin{proof}
  We put $T^n=H^n(-,F)$. Then $T^n$ trivially satisfies the condition (iii) of (\ref{2.9}), and $T^n$ satisfies the
  conditions (i) and (ii) of (loc.\ cit) by (\ref{2.13}). Since higher cohomology $t_P$-sheaf is locally trivial for the
  $t_P$-topology, $T^n$ satisfies the condition (iv) of (\ref{2.9}). Then the conclusion follows from
  (\ref{2.10}).
\end{proof}
\begin{thm}\label{2.3}
  Let $K\in \dps$ be an element, and let $P$ be a ecd-structure on $\mathscr{S}$. Consider the following
  conditions.
  \begin{enumerate}[{\rm (i)}]
    \item $K$ is $t_P$-local.
    \item For any distinguished square of group $G$ of the form {\rm (\ref{5.3.2})}, the commutative diagram
    \begin{equation}\label{2.3.1}\begin{tikzcd}
      K(S)\arrow[r]\arrow[d]&K(S')\arrow[d]\\
      K(X)^G\arrow[r]&K(X')^G
    \end{tikzcd}\end{equation}
    in ${\rm D}({\rm Mod}_{{\bf Q}})$ is homotopy Cartesian.
  \end{enumerate}
  If $P$ is regular, then {\rm (i)} implies {\rm (ii)}. If $P$ is bounded, complete, and regular, then {\rm (ii)}
  implies {\rm (i)}.
\end{thm}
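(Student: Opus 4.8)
The plan is to treat the two implications separately, in each case converting the homotopy-Cartesian statement for (\ref{2.3.1}) into the short exact sequence of $t_P$-sheaves provided by (\ref{1.3}). Throughout write $t=t_P$ and $K'=a_{t*}a_t^*K$.

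For the implication (i) $\Rightarrow$ (ii) I would argue as follows. Fix a distinguished square of group $G$ of the form (\ref{5.3.2}). By (\ref{1.3}) the sequence
\[0\longrightarrow {\bf Q}_t(X')_G\longrightarrow {\bf Q}_t(X)_G\oplus {\bf Q}_t(S')\longrightarrow {\bf Q}_t(S)\longrightarrow 0\]
is exact in ${\rm Sh}_t(\mathscr{S},{\bf Q})$, hence defines a distinguished triangle in $\ds$. Applying $R{\rm Hom}_{\ds}(-,a_t^*K)$ and using the adjunction $(a_t^*,a_{t*})$ together with the $t$-locality isomorphism $K\cong a_{t*}a_t^*K$, I would identify the three terms, via (\ref{2.1}) and (\ref{2.2}), with $K(S)$, $K(X)^G\oplus K(S')$, and $K(X')^G$ respectively. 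The resulting distinguished triangle
\[K(S)\longrightarrow K(X)^G\oplus K(S')\longrightarrow K(X')^G\longrightarrow K(S)[1]\]
is precisely the assertion that (\ref{2.3.1}) is homotopy Cartesian. Regularity of $P$ is used only to make (\ref{1.3}) available.

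For (ii) $\Rightarrow$ (i), note first that $K'$ is $t$-local, so by the implication just proved (which needs only regularity) the square (\ref{2.3.1}) is homotopy Cartesian for $K'$; by hypothesis it is also homotopy Cartesian for $K$. Consequently, comparing the Mayer--Vietoris long exact sequences of $K$ and $K'$ through the cone triangle $K\to K'\to L\to K[1]$ (equivalently, using that the total cofibre of a square is exact), the square (\ref{2.3.1}) is homotopy Cartesian for $L=\mathrm{cone}(K\to K')$ as well. Setting $T^n=H^n(L)$ and using that $(-)^G$ is exact on ${\bf Q}$-modules---so that $H^n(L(Y)^G)=T^n(Y)^G$ by the projector (\ref{5.2.1})---the long exact sequence of this homotopy-Cartesian square supplies boundary maps $\partial_{(G,C)}\colon T^{n-1}(X')^G\to T^n(S)$ and the exactness required in conditions (i) and (ii) of (\ref{2.9}). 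Condition (iv)' holds because $a_t^*L=0$ and sheafification is exact, so $a_t^*T^n=0$ for all $n$. Granting condition (iii)', Theorem (\ref{2.4}) gives $T^n=0$ for all $n$, hence $L=0$ and $K\cong K'$ is $t_P$-local; boundedness and completeness of $P$ enter exactly as the hypotheses of (\ref{2.4}).

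The main obstacle is the initial-object base case, namely condition (iii)' that $T^n(\emptyset)=0$. Because $t_\emptyset\subseteq t_P$, the object $\emptyset$ carries the empty covering sieve, so $K'(\emptyset)=0$ and therefore $T^n(\emptyset)\cong H^{n+1}(K(\emptyset))$; the vanishing we need is thus exactly the $t_\emptyset$-descent $K(\emptyset)\cong 0$. This is the one component of $t_P$-locality not detected by the distinguished squares in (ii), and it must be secured separately, e.g.\ by restricting to objects with $K(\emptyset)\cong 0$ or by reading (ii) together with $t_\emptyset$-descent, before (\ref{2.4}) can be invoked. The second delicate point is the naturality in condition (i) of (\ref{2.9}): one must check that the boundary maps extracted from the homotopy-Cartesian squares are compatible with morphisms of distinguished squares of group $G$, which follows from the functoriality of the cone triangle and of its long exact sequence, in the same way as in (\ref{2.13}).
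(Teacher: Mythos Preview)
Your argument follows the paper's proof essentially line for line. For (i)$\Rightarrow$(ii) the paper invokes (\ref{2.1}), (\ref{2.2}) and the short exact sequence of (\ref{1.3}) exactly as you do. For (ii)$\Rightarrow$(i) the paper forms the cone of $K\to a_{t*}a_t^*K$ (your $L$; the paper re-uses the symbol $K'$ for it), sets $T^n(-)=H^n(L(-))$, checks conditions (i) and (ii) of (\ref{2.9}) from the homotopy-Cartesian square as in (\ref{2.13}), and then appeals to (\ref{2.4}).

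The obstacle you isolate at the initial object is genuine and is glossed over in the paper. The paper simply writes that since the cone is ``acyclic for the topology $t$'', the presheaves $T^n$ satisfy (iii)$'$ and (iv)$'$ of (\ref{2.9}); but $t$-acyclicity of $L$ only yields (iv)$'$ (vanishing of the $t$-sheafification of each $T^n$) and says nothing about $T^n(\emptyset)\cong H^{n+1}(K(\emptyset))$, because sheafification at $\emptyset$ kills every presheaf regardless of its value there. Your diagnosis is therefore correct: condition (ii) as stated does not force $K(\emptyset)\simeq 0$, and without that the base case $(B_0)$ of (\ref{2.4}) is unavailable. In Voevodsky's original cd-structure framework this is handled by including the condition $F(\emptyset)\simeq *$ explicitly; here one must either read (ii) together with $t_\emptyset$-descent, or add $K(\emptyset)\simeq 0$ as a standing hypothesis, before the appeal to (\ref{2.4}) is legitimate.
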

\begin{proof}
  We put $t=t_P$. Assume that $P$ is regular. If $K$ is $t$-local, then to prove (ii), by (\ref{2.1}) and
  (\ref{2.2}), we only need to show that the diagram
  \[\begin{tikzcd}
    R{\rm Hom}_\dps ({\bf Q}_t(S),K)\arrow[r]\arrow[d]&R{\rm Hom}_\dps ({\bf Q}_t(S'),K)\arrow[d]\\
    R{\rm Hom}_\dps ({\bf Q}_t(X)_G,K)\arrow[r]&R{\rm Hom}_\dps ({\bf Q}_t(X')_G,K)
  \end{tikzcd}\]
  in ${\rm D({\rm Mod}_{\bf Q})}$ is homotopy Cartesian. It follows from (\ref{1.3}).

  Assume that $P$ is bounded, complete, and regular. If $K$ satisfies the condition (ii), let $K'$ be a cone of the
  morphism
  \[K\stackrel{ad}\longrightarrow a_{t*}a_t^*K.\]
  Since both $K$ and $a_{t*}a_t^*K$ satisfy the condition (ii), $K'$ also satisfies the condition (ii). We put
  $T^n(S)=H^n(K(S))$ for $S\in {\rm ob}\,\mathscr{S}$. Since $K'$ is acyclic for the topology $t$, $T^n$
  satisfies the conditions (iii)' and (iv)' of (\ref{2.9}). As in the proof of (\ref{2.13}), using (\ref{2.3.1}),
  $T^n$ also satisfies the conditions (i) and (ii) of (\ref{2.9}). Thus by (\ref{2.4}), $K'$ is isomorphic to $0$
  in $\dps$. Then $K$ is isomorphic to $a_{t*}a_t^*K$, so $K$ is $t$-local.
\end{proof}
\begin{none}\label{2.15}
  Let $P$ be a bounded and complete but not necessarily a regular ecd-structure on $\mathscr{S}$, and assume that
  for any $P$-distinguished square of the form (\ref{5.3.2}), $g:S'\rightarrow S$ is a monomorphism.
  Consider the ecd-structure $P'$ that consists of distinguished squares of trivial groups
  \begin{equation}\label{2.15.1}\begin{tikzcd}
    G\times X'\arrow[r,"v'"]\arrow[d,"u'"]&G\times X\arrow[d,"u"]\\
    X'\times_{S'}X'\arrow[r,"v"]&X\times_S X
  \end{tikzcd}\end{equation}
  in (\ref{1.8.1}) for any $P$-distinguished squares of group $G$ of the form (\ref{5.3.2}). Note that $P\cup P'$
  is a bounded, complete, and regular ecd-structure by (\ref{1.6}) and (\ref{1.8}). We have a descent theorem about $P\cup P'$ as follows.
\end{none}
\begin{thm}\label{2.16}
  Under the notations and hypotheses of {\rm (\ref{2.15})}, let $K$ be an object of $\dps$. Then the following are
  equivalent.
  \begin{enumerate}[{\rm (i)}]
    \item $K$ is $t_P$-local, and for any $P'$-distinguished square in $\mathscr{S}$ of group $G$ of
        the form {\rm (\ref{2.15.1})}, the commutative diagram
    \[\begin{tikzcd}
      K(X\times_S X)\arrow[d]\arrow[r]&K(X'\times_{S'}X')\arrow[d]\\
      K(G\times X)\arrow[r]&K(G\times X')
    \end{tikzcd}\]
    in ${\rm D}({\rm Mod}_{\bf Q})$ is homotopy Cartesian.
    \item $K$ is $t_{P\cup P'}$-local.
  \end{enumerate}
\end{thm}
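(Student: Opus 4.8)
The plan is to reduce the statement to Theorem~(\ref{2.3}) applied to the ecd-structure $P\cup P'$, which is bounded, complete, and regular by~(\ref{2.15}). For such an ecd-structure both implications of~(\ref{2.3}) are available, so $K$ is $t_{P\cup P'}$-local if and only if the square~(\ref{2.3.1}) is homotopy Cartesian for \emph{every} distinguished square of $P\cup P'$. I would then split these squares into the two families. For a $P'$-square of the form~(\ref{2.15.1}) the group is trivial, and~(\ref{2.3.1}) is precisely the square occurring in condition~(i); for a $P$-square of group $G$ of the form~(\ref{5.3.2}), the square~(\ref{2.3.1}) becomes
\[\begin{tikzcd}
  K(S)\arrow[r]\arrow[d]&K(S')\arrow[d]\\
  K(X)^G\arrow[r]&K(X')^G
\end{tikzcd}\]
and we write $(\ast)$ for the assertion that all such squares are homotopy Cartesian. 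Thus~(\ref{2.3}) identifies~(ii) with the conjunction of $(\ast)$ and the $P'$-square condition of~(i).

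With this in hand the implication (ii)$\Rightarrow$(i) is immediate: every $t_P$-covering sieve is a $t_{P\cup P'}$-covering sieve, so a $t_{P\cup P'}$-local object is automatically $t_P$-local, and the $P'$-square condition is part of the characterization just obtained. For (i)$\Rightarrow$(ii), the $P'$-square condition is assumed, so it only remains to derive $(\ast)$ from $t_P$-locality together with that condition.

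To prove $(\ast)$ I would use that $K$ is $t_P$-local to rewrite its four corners, via~(\ref{2.1}) and~(\ref{2.2}), as $R{\rm Hom}(-,K)$ of the $t_P$-sheaves ${\bf Q}_{t_P}(S)$, ${\bf Q}_{t_P}(S')$, ${\bf Q}_{t_P}(X)_G$, and ${\bf Q}_{t_P}(X')_G$. A square is homotopy Cartesian exactly when its total Mayer--Vietoris complex is acyclic, so $(\ast)$ becomes the vanishing of $R{\rm Hom}(T,K)$, where $T$ is the complex of $t_P$-sheaves
\[T=\bigl[{\bf Q}_{t_P}(X')_G\longrightarrow {\bf Q}_{t_P}(X)_G\oplus {\bf Q}_{t_P}(S')\longrightarrow {\bf Q}_{t_P}(S)\bigr].\]
When $P$ is regular, $T$ is acyclic by~(\ref{1.3}) and there is nothing to prove; in general $T$ is nonzero, its nonvanishing measuring exactly the failure of~(\ref{5.4.1}) to be an epimorphism. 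The remedy is to compare $T$ with the genuine \v{C}ech resolution of ${\bf Q}_{t_P}(S)$ attached to the $t_P$-cover $\{f,g\}$, which $K$ sees because it is $t_P$-local (cf.~(\ref{2.17})): using that $g$ is a monomorphism to simplify the \v{C}ech nerve, and the $G$-action map $G\times X\to X\times_S X$ of~(\ref{1.8.1}) to fold the self-intersection terms, the discrepancy between this resolution and $T$ is assembled from the representables $\rho_{t_P}(X\times_S X)$, $\rho_{t_P}(G\times X)$, and $\rho_{t_P}(X'\times_{S'}X')$ of the $P'$-squares. It is essential here, as noted in~(\ref{0.6}), that the coefficients be rational, so that $(-)_G$ and $(-)^G$ are exact and split off by the projector~(\ref{5.2.1}).

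The main obstacle is this last comparison: one must show that $T$ is built entirely from the $P'$-square data, so that the hypothesis on $P'$-squares in~(i) --- equivalently, that $R{\rm Hom}(-,K)$ sends every $P'$-cover to an equivalence --- forces $R{\rm Hom}(T,K)=0$. Carrying out the cosimplicial bookkeeping, keeping track of the $G$-equivariance and verifying that the correction terms are exactly the homotopy-Cartesian squares~(\ref{2.15.1}), is the technical heart of the argument. Once it is done, $(\ast)$ holds, and~(\ref{2.3}) upgrades $t_P$-locality together with the $P'$-square condition to $t_{P\cup P'}$-locality, which is~(ii).
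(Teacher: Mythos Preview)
Your overall strategy is correct and aligns with the paper's: both directions funnel through (\ref{2.3}) for $P\cup P'$, and for (i)$\Rightarrow$(ii) the only missing ingredient is your condition $(\ast)$, namely that the $G$-invariant Mayer--Vietoris square attached to each $P$-distinguished square is homotopy Cartesian. Your handling of (ii)$\Rightarrow$(i) and the reduction to $(\ast)$ is the same as the paper's.

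Where you diverge is in the execution of $(\ast)$. You propose to package $(\ast)$ as the vanishing of $R{\rm Hom}(T,K)$ for the three-term complex $T$ of $t_P$-sheaves, and then to filter $T$ by comparing it to the full \v{C}ech resolution of ${\bf Q}_{t_P}(S)$, arguing that the discrepancy is built from the cones of the $P'$-squares. This is plausible but, as you yourself note, the cosimplicial bookkeeping is the heart of the matter and you have not carried it out; in particular you would need to check that at every simplicial level the correction is an honest pullback of a $P'$-square (not just the base one from~(\ref{2.15.1})), and that the $G$-coinvariants interact correctly with the higher self-products $X^{\times_S n}$.

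The paper sidesteps this bookkeeping entirely by a more direct, geometric argument. Rather than working with the complex $T$, it replaces each of the four corners of the target square by its \v{C}ech expansion along the $t_P$-hypercover $(\mathscr{Y},J)$ of $\{f,g\}$; this is legitimate at every corner because $K$ is $t_P$-local and the pullback of $\{f,g\}$ to $S'$, $X$, $X'$ is again a $t_P$-cover. Since homotopy limits of homotopy Cartesian squares are homotopy Cartesian, it suffices to verify the square levelwise. Using that $g$ is a monomorphism, the terms $\mathscr{Y}_j$ are of three types, and for two of them the resulting square has equal rows, hence is trivially Cartesian. For the remaining type (pullback along $\mathscr{X}_i\to S$) the paper stacks the desired square on top of the $G$-invariant part of the $P'$-square~(\ref{2.15.1}) pulled back to $\mathscr{X}_i$: the composite rectangle has identical top and bottom rows, so is trivially Cartesian, and the bottom square is Cartesian by hypothesis~(i); a 2-out-of-3 argument finishes. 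Thus the $P'$-hypothesis is invoked exactly once per level, with no comparison of resolutions needed. Your abstract route would eventually reach the same place, but the paper's stacking trick is the cleaner realization of the idea and eliminates the obstacle you identified.
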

\begin{proof}
  The statement (ii)$\Rightarrow$(i) is done by (\ref{2.3}), so the remaining is the inverse direction. Assume (i).
  By (\ref{2.3}), it suffices to show that for any $P$-distinguished square of group $G$ of the form (\ref{5.3.2}),
  the commutative diagram
  \[\begin{tikzcd}
      K(S)\arrow[r]\arrow[d]&K(S')\arrow[d]\\
      K(X)^G\arrow[r]&K(X')^G
    \end{tikzcd}\]
    in ${\rm D}({\rm Mod}_{{\bf Q}})$ is homotopy Cartesian. Let $(\mathscr{X},\Delta)$ be the \v{C}ech hypercover
    associated to $X\rightarrow S$ where $\Delta$ denotes the simplicial category. Consider the \v{C}ech hypercover  $(\mathscr{Y},J)$ associated to the family of morphisms $\{f:X\rightarrow S,\;g:S'\rightarrow S\}$. Then for any object $j$ of $J$, $\mathscr{Y}_j$ is one of
    \[\mathscr{X}_i,\;S'\times_S \mathscr{X}_i,\;S'\]
    where $i\in {\rm ob}\,\Delta$.

    Then since $K$ is $t_P$-local, by (\ref{2.17}), it suffices to show that the commutative diagrams
    \[\begin{tikzcd}
      K(\mathscr{X}_i)\arrow[r]\arrow[d]&K(S'\times_S \mathscr{X}_i)\arrow[d]\\
      K(X\times_S \mathscr{X}_i)^G\arrow[r]&K(X'\times_S \mathscr{X}_i)^G
    \end{tikzcd}\]
    \[ \begin{tikzcd}
          K(S'\times_S \mathscr{X}_i)\arrow[r]\arrow[d]&K(S'\times_S S'\times_S \mathscr{X}_i)\arrow[d]\\
      K(X\times_S S'\times_S \mathscr{X}_i)^G\arrow[r]&K(X'\times_S S'\times_S \mathscr{X}_i)^G
    \end{tikzcd}\]
        \[\begin{tikzcd}
          K(S')\arrow[r]\arrow[d]&K(S'\times_S S')\arrow[d]\\
      K(X\times_S S')^G\arrow[r]&K(X'\times_S S')^G
    \end{tikzcd}
    \]
    in ${\rm D}({\rm Mod}_{{\bf Q}})$ are homotopy Cartesian for any $i$ where $G$ acts on left $X$ and $X'$. The second and third ones are homotopy
    Cartesian since $g:S'\rightarrow S$ is a monomorphism, so it suffices to show that the first one is homotopy
    Cartesian. The first one is the upper square in the diagram
    \[\begin{tikzcd}
      K(\mathscr{X}_i)\arrow[r]\arrow[d]&K(S'\times_S \mathscr{X}_i)\arrow[d]\\
      K(X\times_S \mathscr{X}_{i})^G\arrow[r]\arrow[d]&K(X'\times_S \mathscr{X}_{i})^G\arrow[d]\\
      K(\mathscr{X}_i)\arrow[r]&K(S'\times_S \mathscr{X}_i)
    \end{tikzcd}\]
    in ${\rm D}({\rm Mod}_{{\bf Q}})$ where the lower square comes from (\ref{2.15.1}).The big square is homotopy Cartesian trivially, and the lower square is homotopy Cartesian by the condition (i). Thus the upper square is homotopy     Cartesian.
\end{proof}
\section{Descent theory for derivators}
\begin{none}
  In this section, we use techniques in \cite[\S 3]{CD12} to transform (\ref{2.3}) and (\ref{2.16}) into statements for contravariant pseudofunctors $\mathscr{T}:\mathscr{S}^{\rm dia}\rightarrow {\rm Tri}$ under some conditions.
\end{none}
\begin{df}
  Let $\mathcal{C}$ be an additive category. An object $C$ of $\mathcal{C}$ is called {\it uniquely divisible} if
  for any integer $n\geq 2$, there exists a unique morphism $f_n:C\rightarrow C$ such that $nf_n={\rm id}$.
\end{df}
\begin{none}\label{3.1}
  Let $\mathscr{T}:\mathscr{S}^{\rm dia}\rightarrow {\rm Tri}$ be a contravariant pseudofunctor. For any $1$-morphism $f$ in $\mathscr{S}^{\rm dia}$, we put
  \[f^*:=\mathscr{T}(f).\]
  We will often assume the following conditions.
  \begin{enumerate}[(i)]
    \item For any object $S$ of $\mathscr{S}$, the prederivator
    \[\mathscr{T}(S\times (-)):\mathscr{S}\rightarrow {\rm Tri}\]
    is a triangulated derivator.
    \item For any $1$-morphism $f$ of $\mathscr{S}$-diagrams, the functor $f^*$ has a right adjoint denoted by
        $f_*$.
    \item For any Cartesian diagram
    \[\begin{tikzcd}
      (\mathscr{Y}',J)\arrow[r,"g'"]\arrow[d,"f'"]&(\mathscr{X}',J)\arrow[d,"f"]\\
      (\mathscr{Y},I)\arrow[r,"g"]&(\mathscr{X},I)
    \end{tikzcd}\]
    of $\mathscr{S}$-diagrams where $g$ is reduced and $f_j$ is an isomorphism for any $j\in {\rm ob}\,J$, the
    natural transformation
    \[f^*g_*\stackrel{Ex}\longrightarrow g_*'f'^*\]
    given by
    \[f^*g_*\stackrel{ad}\longrightarrow f^*g_*f_*'f'^*\stackrel{\sim}\longrightarrow f^*f_*g_*'f'^*\stackrel{ad'}\longrightarrow g_*'f'^*\]
    is an isomorphism.
    \item For any $\mathscr{S}$-diagram $(\mathscr{X},I)$, every object of $\mathscr{T}(\mathscr{X},I)$ is
        uniquely divisible.
  \end{enumerate}
  Note that algebraic derivators defined in \cite[2.4.13]{Ayo07} satisfies the conditions (i)--(iii).
\end{none}
\begin{none}\label{3.2}
  Let $\mathscr{T}:\mathscr{S}^{\rm dia}\rightarrow {\rm Tri}$ be a contravariant pseudofunctor satisfying the conditions (i)--(iv) in (\ref{3.1}), and let $X$ be an object of $\mathscr{S}$. For any object $E$ of $\mathscr{T}(X)$, by
  \cite[3.2.15, 3.2.16]{CD12} and the conditions (i) and (iv) in (\ref{3.1}), we have the morphism
  \[R{\rm Hom}(E,-):\mathscr{T}(X\times (-))\rightarrow {\rm D}({\rm PSh}(-,{\bf Q}))\]
  of derivators (the meaning is that it commutes with $u^*$ for any $1$-morphism $u$ in ${\rm dia}$ where ${\rm dia}$ denotes the $2$-category of small categories) commuting with
  $u_*$ also.

  Consider the $\mathscr{S}$-diagram $(\mathscr{X},\mathscr{S}/X)$ where for any morphism $U\rightarrow V$ in
  $\mathscr{S}/X$, the morphism $\mathscr{X}_U\rightarrow \mathscr{X}_V$ is equal to the morphism $U\rightarrow V$.
  Then consider the $1$-morphism
  \[p:(\mathscr{X},\mathscr{S}/X)\rightarrow (X,\mathscr{S}/X)\]
  of $\mathscr{S}$-diagrams mapping $\mathscr{X}_U\rightarrow \mathscr{X}_V$ to ${\rm id}:X\rightarrow X$ for any morphism $U\rightarrow V$
  in $\mathscr{S}/X$. Then we denote by
  \[\Phi_E:\mathscr{T}(X)\rightarrow {\rm D}({\rm PSh}(\mathscr{S}/X,{\bf Q}))\]
  the functor of triangulated categories given by
  \[R{\rm Hom}(E,p_*f^*(-)).\]

  Now, let $g:(\mathscr{Y},J)\rightarrow X$ be a $1$-morphism of $\mathscr{S}$-diagrams. Then we have the commutative
  diagram
  \[\begin{tikzcd}
    &(\mathscr{Y},J)\arrow[ld,"g"']\arrow[r,"p'"]&(X,J)\arrow[d,"q"]\arrow[lld,"h",near start]\\
    X\arrow[r,"f"',leftarrow]&(\mathscr{X},\mathscr{S}/X)\arrow[r,"p"']\arrow[u,"q'",near end, crossing
    over,leftarrow]&(X,\mathscr{S}/X)
  \end{tikzcd}\]
  of $\mathscr{S}$-diagrams where $h$ denotes the $1$-morphism mapping $J$ to the trivial category, and $q$ and $q'$
  denote the $1$-morphisms induced by $g$. By abuse of notation, we also denote by $h:J\rightarrow {\bf e}$ and
  $q:\mathscr{S}/X\rightarrow J$ the corresponding $1$-morphisms of diagrams.

  Consider the diagram
  \[\begin{tikzcd}
    \mathscr{T}(X)\arrow[r,"f^*"]\arrow[rd,"g^*"']&\mathscr{T}(\mathscr{X},\mathscr{S}/X)\arrow[d,"q'^*"]\arrow[r,"p_*"]
    &\mathscr{T}(X,\mathscr{S}/X)\arrow[d,"q^*"]\arrow[rr,"{R{\rm Hom}(E,-)}"]&&{\rm D}({\rm
    PSh}(\mathscr{S}/X,{\bf Q}))\arrow[d,"q^*"]\\
    &\mathscr{T}(\mathscr{Y},J)\arrow[r,"p_*'"]\arrow[rd,"g_*"']&\mathscr{T}(X,J)\arrow[d,"h_*"]\arrow[rr, "{R{\rm
    Hom}(E,-)}"]&&{\rm D}({\rm PSh}(J,{\bf Q}))\arrow[d,"h_*"]\\
    &&\mathscr{T}(X)\arrow[rr,"{R{\rm Hom}(E,-)}"]&&{\rm D}({\rm Mod}_{\bf Q})
  \end{tikzcd}\]
  of triangulated categories. By the condition (ii) in (\ref{3.1}) and the fact that $R{\rm Hom}(E,-)$ is a
  morphism of derivators commuting with $h_*$, the diagram commutes. Then by the commutativity, for any object $K$
  of $\mathscr{T}(X)$, we have the isomorphism
  \[h_*q^*\Phi_E(K)\stackrel{\sim}\longrightarrow R{\rm Hom}(E,g_*g^*K).\]

  We also have that
  \[\Phi_E(K)(\mathscr{Y},J)=h_*q^*\Phi_E(K).\]
  Thus we have the isomorphism
  \[\Phi_E(K)(\mathscr{Y},J)\stackrel{\sim}\longrightarrow R{\rm Hom}(E,g_*g^*K).\]
  This is functorial in the following sense. Let $g':(\mathscr{Y},J')\rightarrow (\mathscr{Y},J)$ be another
  $1$-morphism of $\mathscr{S}$-diagrams. Then the diagram
  \[\begin{tikzcd}
    \Phi_E(K)(\mathscr{Y},J)\arrow[d]\arrow[r,"\sim"]&R{\rm Hom}(E,g_*g^*K)\arrow[d,"ad"]\\
    \Phi_E(K)(\mathscr{Y}',J')\arrow[r,"\sim"]&R{\rm Hom}(E,g_*g_*'g'^*g^*K)
  \end{tikzcd}\]
  commutes where the left vertical arrow is induced by $g'$.
\end{none}
\begin{df}\label{3.3}
  Let $\mathscr{T}:\mathscr{S}^{\rm dia}\rightarrow {\rm Tri}$ be a contravariant pseudofunctor satisfying the conditions
  (ii) in (\ref{3.1}), let $S$ be an object of $\mathscr{S}$, and let $t$ be a topology on $\mathscr{S}$.
  Following \cite[3.2.5]{CD12}, we say that an object $K$ of $\mathscr{T}(S)$ satisfies $t$-{\it descent} if for
  any morphism $f:X\rightarrow S$ in $\mathscr{S}$ and any $t$-hypercover $g:(\mathscr{X},I)\rightarrow
  X$, the morphism
  \[f_*f^*K\stackrel{ad}\longrightarrow f_*g_*g^*f^*K\]
  in $\mathscr{T}(S)$ is an isomorphism.
\end{df}
\begin{none}\label{3.8}
  Consider the contravariant pseudofunctor ${\rm D}({\rm PSh}(-,\Lambda)):\mathscr{S}^{\rm dia}\rightarrow {\rm Tri}$ where $\Lambda$ is a {\bf Q}-algebra. Let $S$ be an object of $\mathscr{S}$, let $t$ be a topology on $\mathscr{S}$, and let $K$ be an object of $\mathscr{T}(S)$. Then $K$ satisfies $t$-descent if and only if for any morphisms $f:X\rightarrow S$ and $h:Y\rightarrow S$ in $\mathscr{S}$ and any $t$-hypercover $g:(\mathscr{X},I)\rightarrow X$, the homomorphism
  \[(f_*f^*K)(Y)\stackrel{ad}\longrightarrow (f_*g_*g^*f^*K)(Y)\]
  in ${\rm D}({\rm Mod}_{\Lambda})$ is an isomorphism. This is equivalent to the statement that the induced homomorphism
  \[K(X\times_S Y)\longrightarrow K((\mathscr{X},I)\times_S Y)\]
  in ${\rm D}({\rm Mod}_{\Lambda})$ is an isomorphism for any $f$, $h$, and $g$. Thus $K$ satisfies $t$-descent if and only if $K$ is $t$-local by (\ref{2.17}).
\end{none}
\begin{none}\label{3.4}
  Let $\mathscr{T}:\mathscr{S}^{\rm dia}\rightarrow {\rm Tri}$ be a contravariant pseudofunctor satisfying the conditions (iii)
  in (\ref{3.1}), and let $f:X\rightarrow S$ be a morphism in $\mathscr{S}$. Assume that we have a $G$-action on
  $X$ equivariant over $S$. Then this gives a morphism $u:(\mathscr{X},{\bf e}_G)\rightarrow S$ where
  \begin{enumerate}[(i)]
    \item ${\bf e}_G$ denotes the category with single object $*$ and ${\rm Hom}(*,*)=G$,
    \item $\mathscr{X}:{\bf e}_G\rightarrow \mathscr{S}$ is an $\mathscr{S}$-diagram that gives the $G$-action on
        $X$.
  \end{enumerate}
  Let $K$ be an object of $\mathscr{T}(S)$. Then we have defined $(f_*f^*K)^G$ in (\ref{5.2}. Note
  that by \cite[3.3.31]{CD12},
  \[u_*u^*K\cong (f_*f^*K)^G.\]
\end{none}
\begin{thm}\label{3.5}
  Let $\mathscr{T}:\mathscr{S}^{\rm dia}\rightarrow {\rm Tri}$ be a contravariant pseudofunctor satisfying the conditions {\rm
  (i)--(iv)} in {\rm (\ref{3.1})}, let $K$ be an object of $\mathscr{T}(T)$ where $T\in {\rm ob}\,\mathscr{S}$, and
  let $P$ be a bounded, complete, and regular ecd-structure on $\mathscr{S}$. Then the following are equivalent.
  \begin{enumerate}[{\rm (i)}]
    \item For any distinguished square in $\mathscr{S}/T$ of group $G$ of the form (\ref{5.3.2}), the
        commutative diagram
    \[\begin{tikzcd}
      p_*p^*K\arrow[r,"ad"]\arrow[d,"ad"]&p_*g_*g^*p^*K\arrow[d,"ad"]\\
      (p_*f_*f^*p^*K)^G\arrow[r,"ad"]&(p_*h_*h^*p^*K)^G
    \end{tikzcd}\]
    in $\mathscr{T}(T)$ is homotopy Cartesian where $p:S\rightarrow T$ is the structural morphism and $h=fg'$.
    \item $K$ satisfies $t_P$-descent.
  \end{enumerate}
\end{thm}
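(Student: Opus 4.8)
The plan is to reduce the statement to Theorem (\ref{2.3}) for the sliced ecd-structure $P/T$ on $\mathscr{S}/T$ by transporting both conditions through the functors $\Phi_E$ of (\ref{3.2}). First I would record that $P/T$ is again bounded, complete, and regular by (\ref{1.7}), so that Theorem (\ref{2.3}) applies verbatim to any object of ${\rm D}({\rm PSh}(\mathscr{S}/T,{\bf Q}))$. The second standing observation is that a morphism in $\mathscr{T}(T)$ is an isomorphism if and only if $R{\rm Hom}(E,-)$ carries it to an isomorphism for every object $E$ of $\mathscr{T}(T)$, since a cone vanishes as soon as its identity endomorphism does. Hence the family $\{\Phi_E\}_E$ is conservative, and it suffices to test each of conditions (i) and (ii) after applying every $\Phi_E$.

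Next I would translate condition (ii). By the defining formula of (\ref{3.2}), for a morphism $f:X\rightarrow T$ and a $t_P$-hypercover $\pi:(\mathscr{X},I)\rightarrow X$ one has $\Phi_E(K)(X)\cong R{\rm Hom}(E,f_*f^*K)$ and $\Phi_E(K)(\mathscr{X},I)\cong R{\rm Hom}(E,(f\pi)_*(f\pi)^*K)$, and the comparison map between them is the image under $R{\rm Hom}(E,-)$ of the descent map $f_*f^*K\rightarrow f_*\pi_*\pi^*f^*K$. Thus, by the hypercover description of local objects in (\ref{2.17}) together with the conservativity above, $K$ satisfies $t_P$-descent if and only if $\Phi_E(K)$ is $t_P$-local for every $E$; note that this incorporates the $t_\emptyset$-part of $t_P$, which corresponds on both sides to the vanishing of the value at $\emptyset$.

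Then I would identify condition (i) with condition (ii) of Theorem (\ref{2.3}) for $\Phi_E(K)$. For the upper corners of the square this is immediate: $\Phi_E(K)(S)\cong R{\rm Hom}(E,p_*p^*K)$ and, since $S'\rightarrow T$ factors as $pg$, also $\Phi_E(K)(S')\cong R{\rm Hom}(E,p_*g_*g^*p^*K)$. For the lower corners I would use (\ref{3.4}): the $G$-action on $X$ over $S$ gives $u:(\mathscr{X},{\bf e}_G)\rightarrow S$ with $u_*u^*p^*K\cong (f_*f^*p^*K)^G$, and applying $p_*$, which preserves the projector (\ref{5.2.1}) and hence the invariants, yields $(p_*f_*f^*p^*K)^G$; since $R{\rm Hom}(E,-)$ is a morphism of derivators commuting with the relevant $u_*$ by (\ref{3.2}), it commutes with $(-)^G$, giving $\Phi_E(K)(X)^G\cong R{\rm Hom}(E,(p_*f_*f^*p^*K)^G)$ and likewise for $X'$ with $h=fg'$. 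Applying $R{\rm Hom}(E,-)$ to the square of condition (i) therefore produces precisely the square of (\ref{2.3})(ii) for $\Phi_E(K)$, so condition (i) holds if and only if that square is homotopy Cartesian for every $E$.

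Finally I would invoke Theorem (\ref{2.3}) for the bounded, complete, regular ecd-structure $P/T$: the square condition (\ref{2.3})(ii) holds for $\Phi_E(K)$ exactly when $\Phi_E(K)$ is $t_P$-local. Chaining the three equivalences then gives (i) $\Leftrightarrow$ (ii). I expect the main obstacle to lie in the third paragraph, namely the compatibility of $\Phi_E$ with the invariants functor $(-)^G$: one must check carefully that forming $G$-fixed points commutes both with $p_*$ and with $R{\rm Hom}(E,-)$, which rests on the projector formula (\ref{5.2.1}) being preserved by triangulated functors and on the derivator-morphism property of $R{\rm Hom}(E,-)$ recorded in (\ref{3.2}), so that the identification of the lower corners is genuinely natural in the square.
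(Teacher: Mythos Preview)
Your proposal is correct and follows essentially the same route as the paper's proof: both reduce to Theorem~(\ref{2.3}) by applying the functors $\Phi_E$ of (\ref{3.2}), using (\ref{3.4}) to identify the $G$-invariant corners and the conservativity of the family $\{R{\rm Hom}(E,-)\}_E$ to pass back and forth. The only differences are cosmetic: you make the appeal to (\ref{1.7}) and the compatibility of $R{\rm Hom}(E,-)$ with $(-)^G$ more explicit than the paper does, and you translate condition (ii) before condition (i), but the logical skeleton is identical.
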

\begin{proof}
  The statement (i) is equivalent to the statement that for any $E\in {\rm ob}(T(U))$, the induced diagram
  \[\begin{tikzcd}
      R{\rm Hom}(E,p_*p^*K)\arrow[r,"ad"]\arrow[d,"ad"]&R{\rm Hom}(E,p_*g_*g^*p^*K)\arrow[d,"ad"]\\
      R{\rm Hom}(E,(p_*f_*f^*p^*K)^G)\arrow[r,"ad"]&R{\rm Hom}(E,(p_*h_*h^*p^*K)^G)
  \end{tikzcd}\]
  in ${\rm D}({\rm Mod}_{\bf Q})$ is homotopy Cartesian. By (\ref{3.2}) and (\ref{3.4}), this is equivalent to the
  statement that for any $E$, the induced diagram
  \[\begin{tikzcd}
    \Phi_E(K)(S)\arrow[r]\arrow[d]&\Phi_E(K)(S')\arrow[d]\\
    \Phi_E(K)(X)^G\arrow[r]&\Phi_E(K)(X')^G
  \end{tikzcd}\]
  in ${\rm D}({\rm Mod}_{\bf Q})$ is homotopy Cartesian. Then by (\ref{2.3}), this is equivalent to the statement
  that for any $E$, $\Phi_E(K)$ is $t_P$-local. The meaning is that for any $E$, any
  morphism $p:S\rightarrow T$ in $\mathscr{S}$, and any $t_P$-hypercover $q:(\mathscr{X},I)\rightarrow S$, the
  induced morphism
  \[\Phi_E(K)(X)\rightarrow \Phi_E(K)(\mathscr{X},I)\]
  in ${\rm D}({\rm Mod}_{\bf Q})$ is an isomorphism. Now by (\ref{3.2}) again, this is equivalent to
  the statement that for any $E$, $p$, and $q$, the morphism
  \[p_*p^*K\stackrel{ad}\longrightarrow p_*q_*q^*p^*K\]
  is an isomorphism, which is the statement (ii).
\end{proof}
\begin{thm}\label{3.6}
  Let $\mathscr{T}:\mathscr{S}^{\rm dia}\rightarrow {\rm Tri}$ be a contravariant pseudofunctor satisfying the conditions {\rm
  (i)--(iv)} in {\rm (\ref{3.1})}, let $K$ be an object of $\mathscr{T}(T)$ where $T\in {\rm ob}\,\mathscr{S}$, and
  let $P$ be a bounded and complete ecd-structure on $\mathscr{S}$. Consider the bounded, complete, and regular ecd-structure $P'$ on $\mathscr{S}$ defined in {\rm (\ref{2.15})}. Then the following are equivalent.
  \begin{enumerate}
    \item[{\rm (i)}] $K$ satisfies $t_P$-descent, and for any distinguished square in $\mathscr{S}/T$ of group $G$ of the form {\rm (\ref{5.3.2})}, the
        commutative diagram
    \[\begin{tikzcd}
      q_*q^*K\arrow[r,"ad"]\arrow[d,"ad"]&q_*v_*v^*q^*K\arrow[d,"ad"]\\
      q_*u_*u^*q^*K\arrow[r,"ad"]&q_*w_*w^*q^*K^G
    \end{tikzcd}\]
    in $\mathscr{T}(T)$ is homotopy Cartesian where $q:X\times_S X\rightarrow T$ is the structural morphism, $u$ and $v$ are the morphisms given in {\rm (\ref{2.15})}, and $w=uv'$ where $v'$ is the morphism given in {\rm (loc.\ cit)}.
    \item $K$ satisfies $t_{P\cup P'}$-descent.
  \end{enumerate}
\end{thm}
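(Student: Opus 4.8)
The plan is to reduce Theorem~\ref{3.6} to the purely derivator-free descent statement of Theorem~\ref{2.16} by running the same ``test object'' argument used in the proof of Theorem~\ref{3.5}. First I would fix an arbitrary object $E$ of $\mathscr{T}(U)$ and apply the functor $R{\rm Hom}(E,-)$ to the two homotopy Cartesian squares appearing in the statement. As in the proof of (\ref{3.5}), the key input is (\ref{3.2}), which identifies $R{\rm Hom}(E,q_*q^*K)$ with $\Phi_E(K)$ evaluated on the relevant $\mathscr{S}$-diagrams, so that the square built from $q,u,v,w$ in condition~(i) becomes, after applying $R{\rm Hom}(E,-)$, precisely the square
\[\begin{tikzcd}
  \Phi_E(K)(X\times_S X)\arrow[r]\arrow[d]&\Phi_E(K)(X'\times_{S'}X')\arrow[d]\\
  \Phi_E(K)(G\times X)\arrow[r]&\Phi_E(K)(G\times X')
\end{tikzcd}\]
in ${\rm D}({\rm Mod}_{\bf Q})$. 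Here I would use (\ref{3.4}) to rewrite the $u_*u^*$ and $w_*w^*$ terms without any $(-)^G$ decoration, since the squares in (\ref{2.15.1}) are $P'$-distinguished squares of \emph{trivial} group; this is the point where the present theorem differs from (\ref{3.5}), where a genuine $G$-fixed-point functor intervened.

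Next I would translate the $t_P$-descent clause of condition~(i) into the statement that $\Phi_E(K)$ is $t_P$-local, using (\ref{3.8}) together with the functorial isomorphism $\Phi_E(K)(\mathscr{Y},J)\cong R{\rm Hom}(E,g_*g^*K)$ from (\ref{3.2}) applied to $t_P$-hypercovers. Thus condition~(i) of Theorem~\ref{3.6} becomes equivalent, for every $E$, to the conjunction: $\Phi_E(K)$ is $t_P$-local, and $\Phi_E(K)$ satisfies the $P'$-square homotopy Cartesian condition displayed in part~(i) of Theorem~\ref{2.16}. But that conjunction is exactly condition~(i) of (\ref{2.16}) applied to the object $\Phi_E(K)$ of $\dps$. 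By Theorem~\ref{2.16} this holds if and only if $\Phi_E(K)$ is $t_{P\cup P'}$-local.

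Finally I would reverse the dictionary for the combined topology $t_{P\cup P'}$: by (\ref{3.2}) and (\ref{3.8}), ``$\Phi_E(K)$ is $t_{P\cup P'}$-local for every $E$'' is equivalent to saying that for every $E$, every $p:S\rightarrow T$, and every $t_{P\cup P'}$-hypercover $q:(\mathscr{X},I)\rightarrow S$, the morphism $p_*p^*K\xrightarrow{ad}p_*q_*q^*p^*K$ becomes an isomorphism after $R{\rm Hom}(E,-)$. Since the conditions (i) and (iv) of (\ref{3.1}) guarantee (via \cite[3.2.15, 3.2.16]{CD12}) that the objects $E$ detect isomorphisms, this is precisely the assertion that $K$ satisfies $t_{P\cup P'}$-descent, which is condition~(ii). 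The main obstacle I anticipate is bookkeeping rather than conceptual: I must check that the homotopy Cartesian square of condition~(i) in (\ref{3.6}), written with the structural morphism $q:X\times_S X\rightarrow T$ and the maps $u,v,v',w$ of (\ref{2.15}), matches term-by-term (under $R{\rm Hom}(E,p_*(-))$ and the identification (\ref{3.2})) the square of condition~(i) in (\ref{2.16}), keeping careful track that the group $G$ acts only through the left factor and that the trivial-group $P'$-squares require no $(-)^G$ operator. Once this identification is verified, the theorem follows by the chain of equivalences above, exactly parallel to the proof of (\ref{3.5}) but invoking (\ref{2.16}) in place of (\ref{2.3}).
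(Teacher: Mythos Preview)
Your proposal is correct and follows exactly the approach the paper indicates: its proof of Theorem~\ref{3.6} is the single sentence ``We can transform the statement to that of (\ref{2.16}) as in the proof of (\ref{3.5}),'' and you have simply written out that transformation in detail, invoking (\ref{3.2}), (\ref{3.4}), and (\ref{3.8}) to pass to $\Phi_E(K)$ and then applying (\ref{2.16}) in place of (\ref{2.3}). Your observation that the $P'$-squares carry trivial group, so no $(-)^G$ is needed on the $u_*u^*$ and $w_*w^*$ terms, is also correct and clarifies what appears to be a stray ${}^G$ in the paper's displayed square.
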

\begin{proof}
  We can transform the statement to that of (\ref{2.16}) as in the proof of (\ref{3.5}).
\end{proof}
\begin{eg}\label{3.7}
  Let $\Lambda$ be a {\bf Q}-algebra, and let $t$ be a topology on $\mathscr{S}$. Assume that $\mathscr{S}$ is the category of noetherian schemes. Consider the contravariant pseudofunctor ${\rm DA}_t(X,\Lambda):\mathscr{S}^{\rm dia}\rightarrow {\rm Tri}$ in \cite[5.3.31]{CD12} (it is writtetn as ${\rm D}_{{\bf A}^1,t}(-,\Lambda)$ in [loc.\ cit]). Then it satisfies the conditions {\rm
  (i)--(iii)} in (\ref{3.1}) by \cite[4.5.30]{Ayo07}. It also satisfies (iv) in (\ref{3.1}) since $\Lambda$ is a {\bf Q}-algebra. Thus we can apply (\ref{3.5}) and (\ref{3.6}) to this case.
\end{eg}
\section{Topologies on the category of schemes}
\begin{none}\label{4.1}
  In this section, we will introduce several ecd-structures that will produce topoi like \'etale topos,
  qfh-topos, and h-topos. Throughout this section, $\mathscr{S}$ is the category of finite dimensional noetherian schemes. We denote by $\mathscr{S}^{ex}$ the category of finite dimensional quasi-excellent noetherian schemes.
\end{none}
\begin{df}
  Let $G$ be a finite group. Recall from \cite[3.3.14]{CD12} that a morphism $f:X\rightarrow S$ of schemes is
  called a {\it pseudo-Galois} cover of group $G$ if it is finite surjective and it has a factorization
  \[X\stackrel{g}\rightarrow Y\stackrel{h}\rightarrow S\]
  such that $g$ is a Galois cover of group $G$ and $h$ is radicial.
\end{df}
\begin{df}
  Consider a Cartesian diagram
  \[C=\begin{tikzcd}
    X'\arrow[r,"g'"]\arrow[d,"f'"]&X\arrow[d,"f"]\\
    S'\arrow[r,"g"]&S
  \end{tikzcd}\]
  in $\mathscr{S}$ and a group $G$ acting on $X$ over $S$. We have several ecd-structures as follows.
  \begin{enumerate}[(1)]
    \item Following \cite{Voe10b}, $C$ is called an {\it additive} distinguished square (of trivial $G$) if
        $X'=\emptyset$ and $S=X\amalg S'$.
    \item Following \cite{Voe10b}, $C$ is called a {\it closed} distinguished square (of trivial $G$) if $f$
        and $g$ are closed immersions and $S=f(X)\cup g(S')$.
    \item Following \cite{Voe10b}, $C$ is called a {\it Zariski} distinguished square (of trivial $G$) if $f$
        and $g$ are open immersions and $S=f(X)\cup g(S')$,
    \item Following \cite{Voe10b}, $C$ is called a {\it Nisnevich} distinguished square (of trivial $G$) if $f$
        is \'etale, $g$ is an open immersion, and the morphism $f^{-1}(S-g(S'))\rightarrow S-g(S')$ is an
        isomorphism. Here, $S-g(S')$ is considered with the reduced scheme structure.
    \item Following \cite{Voe10b}, $C$ is called a {\it proper cdh}-distinguished square (of trivial $G$) if
        $f$ is proper, $g$ is a closed immersion, and the morphism $f^{-1}(S-g(S'))\rightarrow S-g(S')$ is an
        isomorphism.
    \item Following \cite{Voe10b}, $C$ is called a {\it cdh}-distinguished square (of trivial $G$) if it is a
        Nisnevich or cdh-distinguished square.
    \item $C$ is called a {\it Galois} distinguished square of group $G$ if $X'=S'=\emptyset$, $f$ is a Galois cover of group $G$.
    \item Following \cite[3.3.15]{CD12}, $C$ is called a {\it pseudo-Galois qfh}-distinguished square of group
        $G$ if $f$ is finite and surjective, $g$ is a nowhere dense closed immersion, and the induced morphism
        \[f^{-1}(S-g(S'))\rightarrow S-g(S')\]
        is a pseudo-Galois cover of group $G$.
  \end{enumerate}
  Then we obtain the additive, closed, Zariski, Nisnevich, cdh, Galois, and pseudo-Galois qfh
  ecd-structures and topologies using the definitions in (\ref{0.2}).
\end{df}
\begin{df}\label{4.21}
  Recall from \cite[2.1]{Gei06} that the {\it eh}-topology is the topology generated by the cdh-topology and the \'etale topology.
\end{df}
\begin{none}\label{4.25}
  In \cite{Voe10b}, the notion of the standard density structure $D_d$ is defined on $\mathscr{S}$.  Let $X$ be an object of $\mathscr{S}$. Then ${\rm dim}\,X={\rm dim}_DX$. Here, we also recall several lemmas in \cite{Voe10b}.
  \begin{enumerate}[(1)]
    \item If $U,V\in D_d(X)$, then $U\cap V,\in D_d(X)$.
    \item If $U\in D_d(X)$ and $V$ is an open subscheme of $X$, then $U\cap V\in D_d(V)$.
    \item Let $f:X\rightarrow Y$ be a morphism of finite type in $\mathscr{S}$, and assume that there exists an open subscheme $U$ of $Y$ such that $f^{-1}(U)$ is dense in $X$ and $f^{-1}(U)\rightarrow U$ has fibers of dimension zero. Then for any $d\geq 0$ and $V\in D_d(X)$, there exists $W\in D_d(Y)$ such that $f^{-1}(W)\subset V$.
  \end{enumerate}
\end{none}
\begin{none}\label{4.9}
  From (\ref{4.22}) to (\ref{4.4}), we relate some topoi obtained by ecd-structures with \'etale topos, qfh-topos, or h-topos.
\end{none}
\begin{prop}\label{4.22}
  Let $P$ be the union of the Nisnevich and Galois ecd-structures. Then the $t_P$-topology on $\mathscr{S}$ is equal to the \'etale topology on $\mathscr{S}$.
\end{prop}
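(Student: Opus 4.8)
The plan is to show the two topologies agree by proving mutual refinement: every étale cover is a $t_P$-cover, and every $t_P$-cover is an étale cover. Since topologies on a small category are determined by their covering sieves, and both topologies are finitely generated (the étale topology by étale covers, the $t_P$-topology by the generating families from Definition \ref{0.2}), it suffices to compare the generating families. I would first observe that the $t_P$-topology is visibly coarser than the étale topology: a Nisnevich distinguished square yields a Nisnevich cover $\{f,g\}$, which is étale, and a Galois distinguished square of group $G$ has $f$ a Galois cover, hence étale surjective, so the generating family $\{f\}$ is an étale cover. Thus every $t_P$-covering sieve contains an étale cover, giving $t_P \subseteq t_{\mathrm{\acute et}}$.

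The substantive direction is the reverse inclusion. Here I would take an arbitrary étale cover $\{U_i \to S\}$ and show the sieve it generates is a $t_P$-covering sieve. The standard structural input is that any étale morphism is, locally on the source and target, a composite of Nisnevich-local pieces and finite étale (hence Galois, after passing to a Galois closure) pieces. Concretely, I would reduce to a single surjective étale morphism $U \to S$ and argue by noetherian induction on $S$: using the splitting of an étale morphism into open and finite parts, one refines $\{U \to S\}$ by a Nisnevich distinguished square on a dense open together with a finite étale cover of the complement, and the finite étale cover is dominated by a Galois cover after taking a Galois closure, which is exactly the generating family of a Galois distinguished square. Iterating, stratifying $S$ by the locus where the fibers behave uniformly, produces a $P$-simple cover refining $\{U \to S\}$.

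The main obstacle I expect is the Galois-closure step: to invoke the Galois ecd-structure one needs the étale cover, after suitable localization, to be refined by an \emph{honest} Galois cover of some group $G$, and controlling the group $G$ (and ensuring the refinement is a $t_P$-cover rather than merely a finer étale cover) requires care. The key technical lemma will be that any finite étale surjection is refined by a Galois cover of group $G = \mathrm{Gal}$ of its Galois closure, so that the one-element family $\{\text{Galois cover} \to S\}$ lies in the $t_P$-topology by construction; combined with Nisnevich refinement on the étale-split locus, this closes the induction. Finally I would note that completeness of $P$ (which holds by Proposition \ref{1.5}, since the Nisnevich and Galois squares are stable under base change and trivial over $\emptyset$) guarantees that the resulting simple covers actually generate the étale covering sieves, so the two topologies coincide.
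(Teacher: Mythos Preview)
Your overall strategy is correct and matches the paper's: show $t_P\subseteq t_{\mathrm{\acute et}}$ trivially, then show every \'etale cover is refined by a composite of Nisnevich and Galois covers. The difference is in how the hard direction is executed. The paper does not run a noetherian induction; it simply invokes \cite[3.3.26]{CD12}, which says that any \'etale cover admits a refinement factoring as a finite \'etale cover followed by a Nisnevich cover, and then refines the finite \'etale piece to a Galois cover. This is a two-line proof once that reference is in hand.

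Your proposed noetherian induction is essentially an inline sketch of that same factorization result, so it is not wrong, just more laborious. Two small points to tighten if you carry it out: first, your description of the inductive step (``Nisnevich distinguished square on a dense open together with a finite \'etale cover of the complement'') has the roles slightly garbled---in a Nisnevich square the open immersion is $g:S'\to S$ and the \'etale map $f:X\to S$ is an isomorphism over the closed complement $S\setminus S'$, so the stratification should be phrased accordingly (what one actually uses is that locally in the Nisnevich topology an \'etale cover is refined by a finite \'etale one). Second, your invocation of completeness of $P$ at the end is unnecessary: completeness concerns whether $t_P$-covering sieves contain $P$-simple covers, which is irrelevant to showing that \'etale covers are $t_P$-covers. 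Once you have refined the \'etale cover by Nisnevich and Galois covers, you are done by the definition of $t_P$.
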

\begin{proof}
  The \'etale topology is finer than the Nisnevich topology and Galois topology, so we only need to prove the other
  inclusion. Let $f:X\rightarrow S$ be an \'etale cover. Then by \cite[3.3.26]{CD12}, $f$ has a refinement that has
  a factorization $X\stackrel{g}\rightarrow Y\stackrel{h}\rightarrow S$ such that $g$ is a finite \'etale cover and
  $h$ is a Nisnevich cover. Refining $g$ further, we may assume that $g$ is a Galois cover. Then $f$ becomes a
  $t_P$-cover.
\end{proof}
\begin{prop}\label{4.23}
  Let $P$ be the union of the cdh and Galois ecd-structures. Then the $t_P$-topology on $\mathscr{S}$ is equal to the eh-topology on $\mathscr{S}$.
\end{prop}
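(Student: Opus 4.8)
The plan is to reduce everything to Proposition~\ref{4.22} and then compare generating covers, since no analytic input is needed here. First note that the cdh ecd-structure consists precisely of the Nisnevich and proper cdh distinguished squares, so the topology it generates is the cdh-topology; consequently $t_P$ is the topology generated by the $t_\emptyset$-topology together with the cdh-covers and the Galois covers. Both the cdh-topology and the eh-topology refine the $t_\emptyset$-topology, so it suffices to compare the nontrivial generating families on each side.

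To show that $t_P$ is coarser than the eh-topology, I would check that each generating family of $t_P$ is an eh-cover. The cdh-covers are eh-covers by Definition~\ref{4.21}, and every Galois cover is in particular an \'etale cover, hence again an eh-cover by Definition~\ref{4.21}. Since these families generate $t_P$, this yields one inclusion.

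For the reverse inclusion I would show that both topologies generating the eh-topology are coarser than $t_P$. The cdh-topology is generated by the cdh ecd-structure, which is contained in $P$, so it is coarser than $t_P$. For the \'etale topology I invoke Proposition~\ref{4.22}, which identifies it with the topology generated by the Nisnevich and Galois ecd-structures; every Nisnevich cover is a cdh-cover (the Nisnevich squares lie in the cdh ecd-structure) and hence a $t_P$-cover, while every Galois cover is a $t_P$-cover by construction, so the \'etale topology is coarser than $t_P$ as well. Combining the two inclusions gives $t_P = {}$eh-topology.

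The argument is essentially formal once Proposition~\ref{4.22} is in hand, so there is no substantial obstacle; the only point requiring care is the bookkeeping observation that the Nisnevich distinguished squares genuinely belong to the cdh ecd-structure, which is what lets the \'etale generators pass through the cdh part of $P$ without introducing any new distinguished squares.
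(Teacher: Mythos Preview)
Your proposal is correct and follows essentially the same approach as the paper: the paper's one-line proof simply says the result is a direct consequence of Proposition~\ref{4.22} because the cdh ecd-structure contains the Nisnevich ecd-structure, which is exactly the key observation you isolate and unpack in detail.
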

\begin{proof}
  This is a direct consequence of (\ref{4.22}) since the cdh ecd-structure contains the Nisnevich ecd-structure.
\end{proof}
\begin{prop}\label{4.2}
  Let $P$ be the union of the closed and pseudo-Galois qfh ecd-structures. Then the $t_P$-topology on $\mathscr{S}^{ex}$ is generated by the families of finite morphisms in $\mathscr{S}^{ex}$ that are jointly surjective.
\end{prop}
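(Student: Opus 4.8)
The plan is to prove the two inclusions separately. One direction is immediate: every family generating the $t_P$-topology is a jointly surjective family of finite morphisms. Indeed, the empty family covers $\emptyset$ vacuously; for a closed distinguished square the two maps $f,g$ are closed immersions (hence finite) with $f(X)\cup g(S')=S$; and for a pseudo-Galois qfh distinguished square the map $f$ is already finite surjective, so $\{f,g\}$ is finite and jointly surjective. Hence every $t_P$-cover is a cover for the topology generated by jointly surjective finite families, and it remains to prove the converse.

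So let $\{f_i:X_i\rightarrow S\}$ be a jointly surjective family of finite morphisms. Setting $X=\coprod_i X_i$, the family $\{X_i\hookrightarrow X\}$ is obtained by iterating closed distinguished squares whose upper-left corner is empty, hence is a $t_P$-cover; by the transitivity (local character) of the topology it suffices to show that a single finite surjective $f:X\rightarrow S$ is a $t_P$-cover. I would prove this by induction on $d=\dim S$ (finite, since objects of $\mathscr{S}^{ex}$ are finite dimensional): let $(C_d)$ be the assertion that every finite surjective morphism onto a scheme of dimension $\le d$ is a $t_P$-cover. Since $S_{\rm red}\hookrightarrow S$ is a closed distinguished square cover and the closed immersions $\{S_i\hookrightarrow S\}$ of the irreducible components form a $t_P$-cover built from iterated closed distinguished squares, pulling $f$ back and using transitivity reduces $(C_d)$ to the case $S$ integral.

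Now assume $S$ integral of dimension $d$. The key input is the generic refinement of finite surjective morphisms by pseudo-Galois covers over an integral base (cf.\ \cite[\S 3.3]{CD12}): there is a dense open $U\subseteq S$ and a pseudo-Galois cover $W\rightarrow U$ of some group $G$ admitting an $S$-morphism $W\rightarrow X$. Put $Z=S\setminus U$ with its reduced structure, so $\dim Z<d$. Since $\mathscr{S}^{ex}$ consists of quasi-excellent schemes, the normalization of $S$ in $W$ produces a finite surjective $\widetilde{X}\rightarrow S$ with $\widetilde{X}|_U=W$ and every component dominating $S$; then $\{\widetilde{X}\rightarrow S,\ Z\hookrightarrow S\}$ is a pseudo-Galois qfh distinguished square, hence a $t_P$-cover. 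To conclude that the sieve $\langle X\rightarrow S\rangle$ is covering it suffices, by transitivity, to check that its pullback along each member of this cover is covering. Over $Z$ the pullback is $\{X\times_S Z\rightarrow Z\}$, which is a $t_P$-cover by $(C_{d-1})$.

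The remaining pullback, over $\widetilde{X}$, is where the argument does its real work. Writing $V=X\times_S\widetilde{X}$, the morphism $W\rightarrow X$ gives a section $s$ of $V\rightarrow\widetilde{X}$ over the dense open $\widetilde{X}|_U$. Let $V_1=\overline{s(\widetilde{X}|_U)}\subseteq V$ be the scheme-theoretic closure; then $V_1\rightarrow\widetilde{X}$ is finite surjective and an isomorphism over $\widetilde{X}|_U$, so $\{V_1\rightarrow\widetilde{X},\ \widetilde{X}|_Z\hookrightarrow\widetilde{X}\}$ is a pseudo-Galois qfh distinguished square of the \emph{trivial} group, hence a $t_P$-cover. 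Pulling $\langle V\rightarrow\widetilde{X}\rangle$ back along it: over $V_1$ the inclusion $V_1\hookrightarrow V$ splits the projection $V\times_{\widetilde{X}}V_1\rightarrow V_1$, so that sieve is maximal; over $\widetilde{X}|_Z$ (of dimension $<d$) it is covering by $(C_{d-1})$. Thus $\langle V\rightarrow\widetilde{X}\rangle$ is covering, and therefore so is $\langle X\rightarrow S\rangle$, completing the induction. The main obstacle is the pair of geometric inputs in the third paragraph---the existence of the generic pseudo-Galois refinement $W\rightarrow U$ and the finiteness of the normalization $\widetilde{X}\rightarrow S$ extending it---both of which rely on working over quasi-excellent schemes, which is precisely why the statement is restricted to $\mathscr{S}^{ex}$.
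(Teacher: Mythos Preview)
Your proof is correct and reaches the goal by a genuinely different path from the paper's. Both arguments dispose of the easy direction the same way and reduce to a single finite surjective $f:X\to S$ with $S$ integral, and both use induction together with a pseudo-Galois qfh square over a nowhere dense closed $Z\hookrightarrow S$. The divergence is in how the pullback to the finite cover is handled. The paper first normalizes $S$ and reduces to $X$ integral, then passes to the normalization $X'$ of $X$ in a Galois closure $L'$ of the function-field extension; the crucial computation is that $X'\times_S X'=\bigcup_{g\in G}X'$ (so the self-fibre product is a closed cover by copies of $X'$), which finishes by the closed ecd-structure. You instead avoid normalizing $S$ and analyzing $X'\times_S X'$: you take any pseudo-Galois refinement $W\to U$ factoring through $X$, spread it out to $\widetilde X\to S$, and over $\widetilde X$ use the section of $X\times_S\widetilde X\to\widetilde X$ coming from $W\to X$ to produce a second pseudo-Galois qfh square of \emph{trivial} group over which the sieve is visibly maximal. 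Your route is a bit more elementary in that it never computes a self-fibre product and does not need $S$ normal or $X$ integral; the paper's route is closer to the standard Galois-theoretic picture and makes the role of the group $G$ explicit. Both rely on the same external inputs---generic pseudo-Galois refinement and finiteness of normalization over a quasi-excellent base---so neither is strictly stronger, just organized differently.
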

\begin{proof}
  In every $t_P$-covering, morphisms are finite and jointly surjective. Thus we only need to show that every family
  of finite morphisms that are jointly surjective is a $t_P$-covering. Let $f:X\rightarrow S$ be a finite
  surjective morphism. Since $P$ contains the closed ecd-structure, we only need to show that $f$ is a
  $t_P$-cover.\\[5pt]
  (I) {\it Reduction to the case when $X$ is reduced and $S$ is integral and normal.} Let $\{S_i\}_{i\in I}$ be the
  set of irreducible components of $S$. Then the morphism $\amalg S_{i,{\rm red}}\rightarrow S$ is a $t_P$-cover
  since $P$ contains the closed ecd-structure, so to show that $f$ is a $t_P$-cover, it suffices to show that its
  pullback to each $S_{i,{\rm red}}$ is a $t_P$-cover. Thus we reduce to the case when $S$ is integral.

  To show that $f$ is a $t_P$-cover, it suffices to show that the composition $f:X_{\rm red}\rightarrow S$ is a
  $t_P$-cover. Thus we reduce to the case when $X$ is integral.

  Let $g:S'\rightarrow S$ be the normalization of $S$, and let $h:Z\rightarrow S$ be a nowhere dense closed
  immersion such that $g$ is an isomorphism on $S-h(Z)$. Then by definition, the induced morphism $Z\amalg
  S'\rightarrow S$ is a $t_P$-cover. Thus to show that $f$ is a $t_P$-cover, it suffices to show that the
  projections
  \[Z\times_S X\rightarrow Z,\quad S'\times_S X\rightarrow S'\]
  are $t_P$-covers. By noetherian induction, the first one is a $t_P$-cover, so it suffices to show that the second
  one is a $t_P$-cover. Thus we reduce to the case when $S$ is normal.\\[5pt]
  (II) {\it Reduction to the case when $X$ is integral.} Let $\{X_i\}_{i\in I}$ be the irreducible component of
  $X$, and consider its images $\{Y_i\}_{i\in I}$ in $Y$. Then the induced morphism $\amalg Y_i\rightarrow Y$ is a
  $t_P$-cover by definition, so to show that $f$ is a $t_P$-cover, it suffices to show that for each $j\in I$, the
  family $\{X_i\times_Y Y_j\rightarrow Y_j\}_{i\in I}$ is a $t_P$-cover. To show this, it suffices to show that
  $X_i\rightarrow Y_i$ is a $t_P$-cover. Thus we reduce to the case when $X$ is integral.\\[5pt]
  (III) {\it Final step of the proof.} Let $K$ (resp.\ $L$) be the field of functions of $S$ (resp.\ $X$), and let
  $X'$ be the normalization of $X$ in $L'$ where $L'$ is a Galois extension of the inseparable closure of $K$ in
  $L$. Then it suffices to show that $X'\rightarrow S$ is a $t_P$-cover. Then by \cite[3.3.16]{CD12}, there is a
  nowhere dense closed subscheme $Z$ of $S$ such that $Z\amalg X'\rightarrow S$ is a $t_P$-cover. Thus it suffices
  to show that the projections $X'\times_S Z\rightarrow Z$ and $X'\times_S X'\rightarrow X'$ are $t_P$-cover. The
  first one is a $t_P$-cover by noetherian induction, so it suffices to show that the second one is a $t_P$-cover.
  In the proof \cite[3.3.19]{CD12}, it is shown that $X'\times_S X'=\cup_{g\in G}X'$ in our case. Thus $X'\times_S
  X'\rightarrow X'$ is a $t_P$-cover.
\end{proof}
\begin{prop}\label{4.3}
  Let $P$ be the union of closed, pseudo-Galois qfh, and Nisnevich ecd-structures. Then the $t_P$-topology on $\mathscr{S}^{ex}$ is equal to the qfh-topology on $\mathscr{S}^{ex}$.
\end{prop}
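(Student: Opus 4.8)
The plan is to prove the two inclusions of topologies separately. Write $P_0$ for the union of the closed and pseudo-Galois qfh ecd-structures, so that $P=P_0\cup P_{\rm Nis}$ where $P_{\rm Nis}$ denotes the Nisnevich ecd-structure; by (\ref{4.2}) the $t_{P_0}$-topology on $\mathscr{S}^{ex}$ is generated by the finite jointly surjective families. For the inclusion $t_P\subseteq {\rm qfh}$ it suffices, since the qfh-topology is a topology, to check that each generating cover of $t_P$ is a qfh-cover. A Nisnevich cover $\{f,g\}$ is an \'etale cover, hence quasi-finite and universally submersive; a closed cover and a pseudo-Galois qfh cover are finite and jointly surjective, hence again quasi-finite and universally submersive. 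Thus every generator of $t_P$ is a qfh-cover, and $t_P\subseteq {\rm qfh}$.

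For the reverse inclusion I must show that every qfh-cover $\{U_i\to X\}$ of an object $X$ of $\mathscr{S}^{ex}$ admits a refinement that is a $t_P$-cover. I would first repeat the reduction steps (I) and (II) of the proof of (\ref{4.2}): using that the finite jointly surjective families already lie in $t_{P_0}\subseteq t_P$, passing to the normalizations of the reduced irreducible components reduces the problem to the case in which $X$ is integral and normal. Next, by Zariski's main theorem each quasi-finite morphism $U_i\to X$ factors as an open immersion $U_i\hookrightarrow \overline{U}_i$ followed by a finite morphism $\overline{U}_i\to X$; since $\coprod U_i\to X$ is surjective, $\coprod\overline{U}_i\to X$ is finite and surjective, hence a $t_P$-cover by (\ref{4.2}). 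Base-changing along this finite cover reduces us to refining the pullback of $\{U_i\to X\}$, which now admits sections over the opens $U_i$.

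The remaining, and main, difficulty is to absorb these open-immersion parts into the Nisnevich topology. As in step (III) of (\ref{4.2}), I would pass to the normalization $X'$ of $X$ in a Galois extension of the inseparable closure of the function field inside a residue field of the generic fiber; by \cite[3.3.16]{CD12} this yields a pseudo-Galois qfh cover $\{X'\to X,\,Z\hookrightarrow X\}$ with $Z$ nowhere dense, so that it suffices to refine the pullbacks to $Z$ and to $X'$. The pullback to $Z$ is handled by Noetherian induction, since $\dim Z<\dim X$. Over $X'$ the generic section spreads out, and the point is that, once this finite base change has split the relevant residue extension, a quasi-finite universally submersive morphism acquires sections Nisnevich-locally, so the open-immersion contributions are refined by Nisnevich covers (whose generators lie in $P$), while the locus where this fails is again nowhere dense and is treated by the same Noetherian induction. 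The hard part is precisely this last gluing: one must verify, via the henselian form of Zariski's main theorem, that the quasi-finite members provide genuine Nisnevich-local sections rather than merely a rational section over a dense open, so that the Nisnevich structure together with (\ref{4.2}) suffices to cover $X'$. This is the exact analogue for the qfh-topology of the final step of (\ref{4.2}), now invoking (\ref{4.22}) to recognize \'etale-local splittings as built from Nisnevich covers and finite (Galois) covers. Combining the two inclusions gives $t_P={\rm qfh}$.
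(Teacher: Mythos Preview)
The paper's proof is a one-line citation: the result follows from \cite[3.3.27]{CD12}, which asserts that the qfh-topology on noetherian schemes is generated by Nisnevich covers together with finite surjective families, combined with (\ref{4.2}), which identifies the latter with the $t_{P_0}$-topology. Your proposal amounts to reproving \cite[3.3.27]{CD12} from scratch.

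Your inclusion $t_P\subseteq{\rm qfh}$ and the initial reductions are correct. The difficulty is the converse, and here your argument takes an unnecessary detour and leaves the essential step unwritten. Importing the Galois-extension manoeuvre from step (III) of (\ref{4.2}) is beside the point: that step was designed to impose a pseudo-Galois structure on a \emph{finite} morphism, whereas the problem here is to separate the open and finite parts of a \emph{quasi-finite} one. The invocation of (\ref{4.22}) at the end is likewise irrelevant, since no \'etale covers need to be manufactured. The clean argument, which is what underlies \cite[3.3.27]{CD12} and which you allude to only parenthetically (``the henselian form of Zariski's main theorem''), goes directly: over a henselian local scheme $X_x^h$ (a Nisnevich point), each $U_i\times_X X_x^h$ decomposes as a finite $X_x^h$-scheme disjoint from a piece with empty closed fibre; universal submersiveness then forces the union of the finite pieces to surject onto $X_x^h$, so the qfh-cover is refined, Nisnevich-locally, by a finite surjective family, and (\ref{4.2}) concludes. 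You identify this as ``the hard part'' but do not carry it out; once you do, the Galois layer and the appeal to (\ref{4.22}) can be deleted.
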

\begin{proof}
  It follows from \cite[3.3.27]{CD12} and (\ref{4.2}).
\end{proof}
\begin{prop}\label{4.4}
  Let $S$ be an object of $\mathscr{S}^{ex}$, and let $t$ be the topology on $\mathscr{S}^{ex}$ generated by the cdh-topology and  qfh-topology. Then a presheaf $F$ of sets on $\mathscr{S}^{ex}/S$ is a $t$-sheaf if and only if $F$ is a $h$-sheaf.
\end{prop}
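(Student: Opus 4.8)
The plan is to deduce the statement from the stronger assertion that the $t$-topology and the $h$-topology on $\mathscr{S}^{ex}/S$ have the same covering sieves; equality of the two categories of sheaves of sets is then formal. One inclusion is immediate: a Nisnevich (\'etale) cover, a proper surjective morphism, and a finite or quasi-finite faithfully flat surjective morphism are all universally submersive of finite type, hence $h$-covers. Thus every cdh-distinguished square and every qfh-cover is an $h$-cover, so the $t$-topology, being generated by these, is coarser than the $h$-topology; consequently every $h$-sheaf is a $t$-sheaf. The substance is the reverse inclusion: for every $h$-cover $p\colon X\rightarrow S$ the sieve it generates is $t$-covering.

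I would prove this by noetherian induction on $\dim S$, using the local character (transitivity) axiom of a Grothendieck topology to stitch together contributions over a dense open locus and over lower-dimensional closed loci. Since $\amalg S_{i,{\rm red}}\rightarrow S$ (the reduced irreducible components) is finite and surjective, it is a qfh-cover by (\ref{4.2}); pulling $p$ back along it reduces us, via local character, to the case where $S$ is integral with function field $K$. The key subroutine is that every proper surjective morphism is a $t$-cover: choosing a closed point of the generic fibre and taking its closure gives an integral $T\subseteq X$ with $T\rightarrow S$ proper, dominant, and generically finite; platification par \'eclatement of Raynaud--Gruson produces a blow-up $b\colon S'\rightarrow S$ with nowhere-dense centre $Z$ such that the strict transform $T'\rightarrow S'$ is flat, and being also proper, integral over the integral $S'$, and generically finite, it is finite and surjective, hence a qfh-cover refining $p$ over $S'$. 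Now $\{S'\rightarrow S,\,Z\rightarrow S\}$ is a proper cdh-distinguished cover (a cdh-cover, cf.\ \cite{Voe10b}), so applying local character to it reduces $t$-covering-ness of $p$ over $S$ to that over $S'$ (settled by the qfh-cover just built) and over $Z$, which is handled by the induction hypothesis since $\dim Z<\dim S$.

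For a general $h$-cover the same scheme applies, with one extra twist: the integral $T\rightarrow S$ coming from the generic fibre is only generically finite, not proper. Here I would replace $T$ by the normalization $\bar T$ of $S$ in the finite extension $L/K$, which is finite over $S$ by quasi-excellence (exactly as in the proof of (\ref{4.2})) and hence a qfh-cover, and which factors through $p$ over a dense open of $S$. Resolving the indeterminacy locus of this generic factorization by further blow-ups (each a cdh-cover, hence a $t$-cover by the previous paragraph) and feeding the remaining lower-dimensional closed loci into the induction hypothesis through the local character axiom yields a $t$-cover refining $p$. Equivalently, one may cite Voevodsky's structural analysis of $h$-covers in \cite{Voe96}, to the effect that every $h$-cover is refined by a composition of a cdh-cover and a qfh-cover. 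Either way $t=h$ as topologies, which gives the asserted equivalence of $t$-sheaves and $h$-sheaves.

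The hard part is globalizing the generic qfh-refinement: the normalization, or the flat strict transform, refines $p$ only over a dense open, so one must control the loci where the comparison morphism fails to be defined, where flatness degenerates, and where surjectivity is lost, turning each of these into a blow-up (a proper cdh-cover, already known to be a $t$-cover) so that the residual loci drop dimension and are absorbed by the induction hypothesis. The delicate bookkeeping here is exactly of the type carried out in the reductions in the proof of (\ref{4.2}), and the external inputs I rely on are platification par \'eclatement of Raynaud--Gruson, the finiteness of normalization over quasi-excellent bases, the implication that a flat proper generically finite morphism with integral source over an integral base is finite surjective, and Voevodsky's description of $h$-coverings in \cite{Voe96}.
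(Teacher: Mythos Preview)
Your argument is correct but takes a genuinely different route from the paper. You prove the stronger-looking (though formally equivalent) assertion that the two topologies coincide: every $h$-cover is refined by a $t$-cover. Your engine is geometric---Raynaud--Gruson platification to flatten a generically finite proper map after a blow-up, finiteness of normalization over a quasi-excellent base to handle the non-proper case, and noetherian induction on $\dim S$ to absorb the blow-up centres via the cdh topology. The paper instead works directly with the sheaf condition: it invokes \cite[3.1.9]{Voe96} to reduce to $h$-covers $f:X\rightarrow S$ factoring through a proper birational surjection $q:Y\rightarrow S$, picks a nowhere-dense closed $S'\hookrightarrow S$ over which $q$ is an isomorphism, and then checks by hand that a compatible section on $X$ descends to $S$ by first descending its restriction over $S'$ (noetherian induction on the base) and then applying the $t$-sheaf condition to the explicit $t$-cover $X\amalg S'\rightarrow S$. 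Your approach is more transparent about the geometric inputs and delivers the clean statement $t=h$; the paper's approach avoids Raynaud--Gruson entirely and is very short once the Voevodsky reduction is granted, at the cost of a slightly opaque equalizer manipulation. One remark: your treatment of the general (non-proper) $h$-cover is left as a sketch with two options; if you want a self-contained write-up, the cleanest path is simply to cite the refinement theorem in \cite{Voe96} (every $h$-cover of a noetherian excellent scheme is refined by an open cover followed by a proper surjection), after which your proper-surjective argument finishes the job.
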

\begin{proof}
  Since h-topology is finer than cdh-topology and qfh-topology, the statement is a necessary condition. For
  sufficiency, by \cite[3.1.9]{Voe96}, we only need to show the sheaf condition for $f:X\rightarrow S$ that can be
  factorized into $p:X\rightarrow Y$ and $q:Y\rightarrow S$ such that $q$ is proper, birational, and surjective.

  Choose a nowhere dense closed immersion $g:S'\rightarrow S$ such that $q$ is an isomorphism on $S-g(S')$. Then
  the induced morphism $X\amalg S'\rightarrow S$ is a $t$-cover. Consider the sequence
  \[F(S)\rightarrow F(X)\rightrightarrows F(X\times_S X)\]
  of sets, and let $a\in F(X)$ be an element whose images in $F(X\times_S X)$ are equal. Let $b$ be the image of
  $a$ in $F(X\times_S S')$. Then the images of $b$ in $F(X\times_S S'\times_S S')$ are equal. Thus there is an
  element $c\in F(S')$ whose image in $F(X\times_S S')$ is $b$ since by noetherian induction, the sheaf condition
  is satisfied for the projection $X\times_S S'\rightarrow S'$. The images of $c$ in $F(S'\times_S S')$ are also
  equal since $S'\rightarrow S$ is a closed immersion, so the images of $(b,c)$ in
  \[F((X\coprod S')\times_S (X\coprod S'))\]
  are equal. We can use the sheaf condition for $X\coprod S'\rightarrow S$ since it is a $t$-cover, and then we see
  that $b$ is in the image of $F(S)\rightarrow F(X)$.
\end{proof}
\begin{rmk}\label{4.5}
  It seems that $h$-topology does not come from an ecd-structure even though $h$-topos comes from an
  ecd-structure.
\end{rmk}
\begin{none}\label{4.10}
  From (\ref{4.12}) to (\ref{4.17}), we prove that some ecd-structures are bounded, complete, or regular.
\end{none}
\begin{prop}\label{4.12}
  The additive, cdh, closed, Nisnevich, and Zariski ecd-structures are complete, regular, and bounded with respect to the
  standard density structure.
\end{prop}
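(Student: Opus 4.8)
The plan is to reduce the entire statement to Voevodsky's results in \cite{Voe10a} and \cite{Voe10b}. Each of these five ecd-structures consists only of distinguished squares of trivial group, so by the last sentence of (\ref{0.2}) each is literally a cd-structure in the sense of \cite[2.1]{Voe10a}. Since $G$ is trivial we have $\rho_{t_P}(X)_G\cong \rho_{t_P}(X)$, so the epimorphism condition (\ref{5.4.1}) collapses to Voevodsky's regularity condition, while completeness of (\ref{5.4}) and boundedness with respect to a density structure of (\ref{5.7}) specialize exactly to the corresponding notions of [loc.\ cit]. It therefore suffices to prove that each of the five structures is complete, regular, and bounded as an ordinary cd-structure. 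I would first remove the cdh case by a structural reduction: by definition the cdh ecd-structure is the union of the Nisnevich and proper cdh ecd-structures, so by (\ref{1.6}) it inherits all three properties as soon as those two pieces have them. This leaves the additive, closed, Zariski, Nisnevich, and proper cdh cases.

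For completeness I would invoke the criterion (\ref{1.5}). Condition (i) is immediate because in $\mathscr{S}$ every morphism with values in $\emptyset$ is an isomorphism, and condition (ii)---stability of each class of distinguished squares under arbitrary base change $Y\rightarrow X$---is verified class by class: open immersions, closed immersions, \'etale morphisms, and proper morphisms are all stable under pullback, and the defining isomorphism over the complement of $g(S')$ is preserved by base change. For boundedness I would use the standard density structure recalled in (\ref{4.25}), together with the finiteness of Krull dimension on $\mathscr{S}$; the existence of reducing refinements of each distinguished square relative to this density structure is precisely what Voevodsky establishes in \cite{Voe10b}, and the lemmas quoted in (\ref{4.25}) are the combinatorial inputs needed to reproduce those arguments.

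The real work lies in regularity, which I expect to be the main obstacle. I would appeal to (\ref{1.8}): for trivial $G$ the Cartesian diagram (\ref{1.8.1}) degenerates, so that condition (iii) of (\ref{1.8}) becomes the requirement that the family $\{X'\times_{S'}X'\rightarrow X\times_S X,\ \Delta\colon X\rightarrow X\times_S X\}$ be a $t_P$-cover; since conditions (i) and (ii) of (\ref{1.8})---that the square is Cartesian with $g\colon S'\rightarrow S$ a monomorphism---hold by inspection in each of the five cases, verifying this cover condition yields the epimorphism (\ref{5.4.1}) and hence regularity. The cover condition is formal in the additive and Zariski cases, where the fibre products decompose along the open cover, but in the Nisnevich and proper cdh cases it rests on the geometric input that $X\times_S X$ is covered, away from the preimage of $S'$, by the diagonal, which is exactly what the isomorphism over $S-g(S')$ in the definition of those squares provides. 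With regularity established for all five structures, the proposition follows by assembling the pieces via (\ref{1.6}).
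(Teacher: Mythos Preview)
Your proposal is correct, but the paper's proof is a one-line citation: ``It is \cite[2.2]{Voe10b}.'' Since all five ecd-structures have trivial group, they are ordinary cd-structures, and Voevodsky has already established completeness, regularity, and boundedness for each of them in \cite{Voe10b}; the paper simply invokes that.

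Your approach instead reconstructs Voevodsky's argument internally, using the criteria (\ref{1.5}), (\ref{1.6}), (\ref{1.8}) and the lemmas recorded in (\ref{4.25}). This is a genuinely different route in presentation: rather than importing the result wholesale, you reduce cdh to Nisnevich plus proper cdh via (\ref{1.6}), verify completeness through the base-change criterion (\ref{1.5}), and obtain regularity from (\ref{1.8}) by checking that $\{X'\times_{S'}X'\to X\times_S X,\ \Delta\colon X\to X\times_S X\}$ is a $t_P$-cover in each case. The payoff of your approach is that it is self-contained within the framework of the present paper and illustrates how the ecd-criteria specialize; the payoff of the paper's approach is brevity, since nothing new is being claimed beyond what is already in \cite{Voe10b}. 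One small point worth making explicit in your write-up: in the proper cdh case, the pair $\{\Delta,\ (X\times_S X)\times_S S'\hookrightarrow X\times_S X\}$ is itself the covering family of a proper cdh distinguished square (with $\Delta$ playing the role of the proper map, being an isomorphism over the complement of $S'$), so it really is a $t_P$-cover for $P$ the proper cdh structure and not merely a closed cover.
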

\begin{proof}
  It is \cite[2.2]{Voe10b}.
\end{proof}
\begin{prop}\label{4.11}
  The Galois and pseudo-Galos qfh ecd-structure are complete.
\end{prop}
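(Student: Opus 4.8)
The plan is to handle the two ecd-structures by completely different methods: the Galois case is an immediate application of (\ref{1.5}), whereas the pseudo-Galois qfh case requires a direct geometric argument because its distinguished squares are not stable under base change.

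For the Galois ecd-structure I would simply verify the two hypotheses of (\ref{1.5}). Condition (i) holds because the initial object of $\mathscr{S}$ is the empty scheme, and the only scheme admitting a morphism to $\emptyset$ is $\emptyset$ itself, so such a morphism is an isomorphism. For condition (ii), a Galois distinguished square has $X'=S'=\emptyset$ and right vertical arrow $f\colon X\to S$ a Galois cover of group $G$; given any $Y\to S$ the pullback $C\times_S Y$ again has empty left column, and its right vertical arrow $X\times_S Y\to Y$ is the base change of a $G$-torsor, hence again a Galois cover of group $G$ with the base-changed action. Thus $C\times_S Y$ is a Galois distinguished square of group $G$, and (\ref{1.5}) yields completeness.

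For the pseudo-Galois qfh ecd-structure one cannot apply (\ref{1.5}) directly: although finiteness, surjectivity, being a closed immersion, and being pseudo-Galois over the open complement are all stable under base change, the requirement that $g\colon S'\to S$ be \emph{nowhere dense} is not — pulling back along $Y=S'\to S$ turns $g$ into an isomorphism. So I would verify completeness from the definition, reducing (the argument of \cite{Voe10a} carrying over to the equivariant setting, cf.\ (\ref{1.4})) to the claim that for every pseudo-Galois qfh square over $S$ and every $Y\to S$, the sieve $R$ on $Y$ generated by the pullback family $\{X\times_S Y\to Y,\ S'\times_S Y\to Y\}$ contains a $P$-simple cover. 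The heart of the matter is the dichotomy for $Y$ integral: either $Y\to S$ factors through the closed subscheme $S'$, in which case $S'\times_S Y=Y$ (as $g$ is a monomorphism) and the family contains ${\rm id}_Y$, so $R$ is the maximal sieve; or the generic point of $Y$ lands in $S-g(S')$, in which case $S'\times_S Y$ is a proper, hence nowhere dense, closed subscheme of $Y$ and the pullback square is itself a genuine pseudo-Galois qfh square.

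To pass from integral $Y$ to an arbitrary one I would run a noetherian induction using two auxiliary covers that are themselves pseudo-Galois qfh squares of the trivial group: the reduction $Y_{\rm red}\to Y$ (finite, surjective, radicial, with empty nowhere dense locus) and the component separation $\coprod_i Y_i\to Y_{\rm red}$ with nowhere dense locus $Z=\bigcup_{i<j}(Y_i\cap Y_j)$ (an isomorphism away from $Z$). Splitting the irreducible components into those contained in $S'\times_S Y$ — which, being reduced and integral, map to $Y$ through $S'\times_S Y$ — and the rest, to which the integral dichotomy applies so that their pullback squares contribute morphisms factoring through $X\times_S Y$ or $S'\times_S Y$, and recursing on the lower-dimensional $Z$, one assembles a $P$-simple cover all of whose members lie in $R$. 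The main obstacle is precisely this bookkeeping: one must arrange that the reduction, the component-separation cover, and the per-component pullback covers be chosen so that every resulting morphism to $Y$ factors through $X\times_S Y$ or $S'\times_S Y$, i.e.\ lands in $R$; this is exactly where the failure of the nowhere-dense condition under base change is absorbed into the (trivial) maximal-sieve case of the integral dichotomy.
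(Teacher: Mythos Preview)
Your Galois argument is exactly the paper's: both cases are disposed of in the paper by a bare citation of (\ref{1.5}). For the pseudo-Galois qfh case you take a genuinely different route, and your reason for doing so is sound: condition (ii) of (\ref{1.5}) asks that $C\times_S Y$ again be distinguished, and the clause ``$g$ is a \emph{nowhere dense} closed immersion'' is not stable under base change, so the hypothesis of (\ref{1.5}) is not literally satisfied. The paper does not comment on this; it simply invokes (\ref{1.5}).

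Your workaround is correct and is essentially the standard patch for this situation. Reducing to $Y$ integral via the two auxiliary pseudo-Galois qfh squares you describe (the nil-immersion $Y_{\rm red}\to Y$ with $S'=\emptyset$, and the component separation $\coprod_i Y_i\to Y_{\rm red}$ with $S'$ the pairwise intersection locus, both pseudo-Galois of trivial group) is legitimate, and your dichotomy on an integral $Y$ is the right one: if the generic point lands in $S'$ then, $Y$ being reduced and $g$ a closed immersion, the map factors through $S'$ and $S'\times_S Y\to Y$ is an isomorphism; otherwise $S'\times_S Y$ misses the unique generic point and is therefore nowhere dense, so the pulled-back square is again distinguished. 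The noetherian recursion on the intersection locus $Z$ then assembles a $P$-simple cover landing in the sieve $R$. One small point to make explicit in the write-up: the ``simple cover'' you build must be a simple cover \emph{of $Y$}, so when you split into components you should record that $\{\coprod_i Y_i\to Y_{\rm red},\,Z\to Y_{\rm red}\}$ is the covering family of a single pseudo-Galois qfh square (not the additive one), which you already noted.

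What your approach buys over the paper's is an honest verification; what the paper's approach buys is brevity, at the cost of sweeping the degenerate base changes under the rug. A shorter fix in the spirit of the paper would be to observe that one may enlarge $P$ by allowing the degenerate squares in which $g$ is an isomorphism: this does not change $t_P$ (the family $\{f,\,{\rm id}_S\}$ generates the maximal sieve), and with these added, every pullback $C\times_S Y$ is either distinguished or reduces, after the component/reduction covers you describe, to distinguished pieces --- but spelling that out is essentially your argument anyway.
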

\begin{proof}
  It follows from (\ref{1.5}).
\end{proof}
\begin{prop}\label{4.6}
  The pseudo-Galois qfh ecd-structure is bounded with respect to the standard density structure.
\end{prop}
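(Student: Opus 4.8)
The plan is to verify the two requirements of boundedness in (\ref{5.4})(1) directly: that every pseudo-Galois qfh square is itself reducing, so that its identity refinement suffices, and that $\dim_D S$ is finite for every $S$. The latter is immediate, since $\mathscr{S}$ consists of finite dimensional noetherian schemes and $\dim_D S = \dim S$ by (\ref{4.25}). The substance of the argument rests on the observation that, because $f$ is \emph{finite}, it has fibers of dimension zero everywhere; hence, in contrast with the proper cdh case, no modification of the total space $X$ is needed, and a reducing square can be produced simply by pulling the given square back along a suitable dense open of $S$.

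Concretely, fix $(G,C)\in P$ of the form (\ref{5.3.2}), an integer $i\ge 0$, and opens $X_0\in D_{i+1}(X)$, $S_0'\in D_{i+1}(S')$, $X_0'\in D_i(X')$. I would first record that pulling $C$ back along an open immersion $S_1\hookrightarrow S$ again yields a pseudo-Galois qfh square of group $G$: finiteness and surjectivity of $f$ are stable under base change, $g^{-1}(S_1)=S_1\cap S'$ is again a nowhere dense closed immersion since $S\setminus S'$ is dense, and restricting the pseudo-Galois cover over $S\setminus S'$ to the open $S_1\cap(S\setminus S')$ keeps it pseudo-Galois of the \emph{same} group $G$, because a Galois cover base-changed along an open immersion stays Galois of group $G$ and radicial morphisms are preserved. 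The projection $C\times_S S_1\to C$ is then a $G$-equivariant morphism of distinguished squares, so the presence of the group adds nothing to the difficulty of the trivial-group case.

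It remains to choose $S_1\in D_{i+1}(S)$ so that $f^{-1}(S_1)\subseteq X_0$, $S_1\cap S'\subseteq S_0'$, and $f'^{-1}(S_1\cap S')\subseteq X_0'$; then $C_1:=C\times_S S_1$ together with the projection witnesses the reducing condition, with $X_1=f^{-1}(S_1)$, $S_1'=S_1\cap S'$, and $X_1'=f'^{-1}(S_1\cap S')$ factoring through $X_0$, $S_0'$, and $X_0'$ respectively. I would build $S_1$ as an intersection $W_a\cap W_b\cap W_c$ of three elements of $D_{i+1}(S)$, using (\ref{4.25})(1). For $W_a$, since $f$ is finite surjective, (\ref{4.25})(3) applied to $f\colon X\to S$ with $U=S$ produces $W_a\in D_{i+1}(S)$ with $f^{-1}(W_a)\subseteq X_0$. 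For $W_b$, set $W_b=S\setminus(S'\setminus S_0')$, whose complement is closed in $S$ and, because $S_0'\in D_{i+1}(S')$ and $\dim S'\le\dim S$, is small enough that $W_b\in D_{i+1}(S)$; by construction $W_b\cap S'=S_0'$. For $W_c$, apply (\ref{4.25})(3) to the finite surjective $f'\colon X'\to S'$ to obtain $W_c'\in D_i(S')$ with $f'^{-1}(W_c')\subseteq X_0'$, and put $W_c=S\setminus(S'\setminus W_c')$, so that $W_c\cap S'=W_c'$.

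The single point that uses the hypothesis on $g$ is the claim that $W_c\in D_{i+1}(S)$. Here $W_c'$ lies only in $D_i(S')$, one density level below the other two opens, reflecting the asymmetric levels ($i$ versus $i+1$) in the definition of reducing; but since $g\colon S'\to S$ is a \emph{nowhere dense} closed immersion one has $\dim S'\le\dim S-1$, and this drop in dimension promotes the level-$i$ density of $W_c'$ on $S'$ to level-$(i+1)$ density of $W_c$ on $S$. This matched bookkeeping between the density levels and the codimension of $g$ is the main and essentially the only technical content; once it is in place, $S_1=W_a\cap W_b\cap W_c\in D_{i+1}(S)$ satisfies all three containments, and the proof is complete. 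I would phrase this final dimension count directly from the definition of the standard density structure, in parallel with Voevodsky's boundedness argument in \cite{Voe10b}.
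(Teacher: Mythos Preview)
Your argument is correct and takes a genuinely different route from the paper's. Both proofs produce the reducing square as a pullback $C\times_S S_1$ along a carefully chosen dense open $S_1\in D_{i+1}(S)$, and both hinge on the same codimension bookkeeping: a closed subscheme that is nowhere dense allows one to promote a $D_i$-condition on the subscheme to a $D_{i+1}$-condition on the ambient scheme. The difference is \emph{where} this promotion is carried out. The paper first passes to a refinement of $(G,C)$ in which $X$ is replaced by the $G$-stable closure $\bigcap_{g\in G} g(Y)$ of $f^{-1}(S\setminus S')$, so that $g'(X')$ becomes nowhere dense in $X$; it then makes $X_0,X_0'$ $G$-stable, handles $X_0'$ by first passing from $X'$ to $X$ (using that $X'$ is nowhere dense) and only afterwards descending to $S$ via (\ref{4.25})(3) for $f$. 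You instead descend from $X'$ to $S'$ first via (\ref{4.25})(3) for the finite morphism $f'$, and then promote from $S'$ to $S$ using the \emph{given} nowhere-dense hypothesis on $g$. This order lets you avoid both the initial refinement and the $G$-equivariance step, and in particular shows directly that every pseudo-Galois qfh square is itself reducing, not merely that it admits a reducing refinement.

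Two small points of presentation. First, your justification for $W_b\in D_{i+1}(S)$ via ``$\dim S'\le\dim S$'' is not the real reason: what you need is that a proper generization of $z$ inside $S'$ (guaranteed by $S_0'\in D_{i+1}(S')$) is automatically a proper generization in $S$ with the same local codimension, since $S'$ is closed. Second, the phrase ``$\dim S'\le\dim S-1$'' for $W_c$ is the right intuition but not literally what is used; the pointwise statement you need (and indicate you would prove from the definition) is that every point of $S'$ admits a proper generization in $S$, which follows from $S'$ being nowhere dense and closed. With those formulations made precise, your proof goes through cleanly.
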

\begin{proof}
  Consider a pseudo-Galois qfh-distinguished square $(G,C)$ of the form (\ref{5.3.2}). Let $Y$ be the scheme
  theoretic closure of the open subscheme $f^{-1}(S-g(S'))$ in $X$. Refining $C$, we can replace $X$ by $\cap_{g\in
  G} g(Y)$, so we may assume that $f^{-1}(S-g(S'))$ is dense in $X$.

  Assume that we have $X_0\in D_{i+1}(X)$, $S_0'\in D_{i+1}(S')$, and $X_0'\in D_i(X')$. Then by (\ref{4.25}(2)), we can
  replace $X_0$ (resp.\ $X_0'$) by $\cap_{g\in G}g(X_0)$ (resp.\ $\cap_{g\in G}g(X_0')$), so we may assume that the
  open immersions $X_0\rightarrow X$ and $X_0'\rightarrow X'$ are $G$-equivariant over $S$ and $S'$ respectively.

  By (\ref{4.25}(3)), we can find $S_0\in D_{i+1}(S)$ and $S_1\in D_{i+1}(S)$ such that $f^{-1}(S_0)\subset X_0$ and
  $g^{-1}(S_1)\subset S_0'$. Then by (\ref{4.25}(1)), $S_0\cap S_1\in D_{i+1}(S)$. Thus by (\ref{4.25}(2)), replacing $S$ by $S_0\cap S_1$, we may assume that $X=X_0$ and $S'=S_0'$.
  We put
  \[S_2=S-fg'(X'-X_0').\]
  Consider the pullback square of $C$ along $S_2\rightarrow S$. To show that this satisfies the condition, the
  remaining is to show that $S_2\in D_{i+1}(S)$.

  Applying (\ref{4.25}(3)) to $f$, it suffices to show that $(X-g'(X'-X_0'))\in D_{i+1}(X)$. Since $X_0'\in D_{i}(X_0)$ and
  $g'$ is a closed immersion, it suffices to show that $X-g'(X')$ is dense in $X$. This follows from the assumption
  that $X-g'(X')=f^{-1}(S-g(S'))$ is dense in $X$.
\end{proof}
\begin{prop}\label{4.13}
  The union of the additive and Galois ecd-structures is regular.
\end{prop}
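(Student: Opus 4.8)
The plan is to verify the criterion of Proposition \ref{1.8}, since that proposition reduces regularity to three conditions that are purely combinatorial in nature. I would first recall that the additive and Galois ecd-structures are defined by very simple squares: an additive distinguished square has $X' = \emptyset$ and $S = X \amalg S'$ (with trivial group), while a Galois distinguished square of group $G$ has $X' = S' = \emptyset$ and $f : X \to S$ a Galois cover of group $G$. In both cases, conditions (i) and (ii) of Proposition \ref{1.8} are immediate: an additive square is Cartesian with $X' = \emptyset$, and the inclusion $S' \hookrightarrow S$ of a summand is a monomorphism; a Galois square is trivially Cartesian since all the relevant objects except $X$ and $S$ are $\emptyset$, and $S' = \emptyset \to S$ is vacuously a monomorphism. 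By Proposition \ref{1.6}, it suffices to check regularity for each ecd-structure separately, or equivalently to verify condition (iii) of (\ref{1.8}) for squares of each type.

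The main work is condition (iii), concerning the Cartesian diagram (\ref{1.8.1}) and the claim that the family $\{X' \times_{S'} X' \to X \times_S X,\; G \times X \to X \times_S X\}$ is a $t_P$-cover. For an additive square this is easy: since $X' = \emptyset$ and the group is trivial, $G \times X = X$ and $X \times_S X = X$ (because $X \hookrightarrow S = X \amalg S'$ is the inclusion of a summand, so the fibre product of $X$ with itself over $S$ is just $X$). Hence the second morphism is already an isomorphism and the family is trivially covering. For a Galois square the key computation is $X \times_S X \cong G \times X$, which holds precisely because $f$ is a Galois cover of group $G$: the canonical morphism $G \times X \to X \times_S X$ sending $(g,x) \mapsto (gx, x)$ is an isomorphism. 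Thus the morphism $G \times X \to X \times_S X$ in the family is an isomorphism, and again the family is covering regardless of the first morphism (here $X' = S' = \emptyset$, so $X' \times_{S'} X' = \emptyset$).

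I expect the only genuine subtlety to be bookkeeping the degenerate cases correctly — in particular confirming that $X \times_S X = G \times X$ for a Galois cover, which is the defining étale-locally-trivial property of a Galois cover, and checking that the empty-object identifications ($X' = \emptyset$ or $S' = \emptyset$) do not cause the fibre products to misbehave under the $t_\emptyset$-topology of (\ref{5.1}). Once condition (iii) is established in both cases, Proposition \ref{1.8} yields that each ecd-structure is regular, and Proposition \ref{1.6} then gives that their union is regular, completing the proof.
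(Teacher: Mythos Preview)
Your proposal is correct and follows essentially the same approach as the paper: the paper's proof is the one-line observation that for a Galois cover $f:X\to S$ of group $G$ one has $X\times_S X\cong G\times X$, and then an appeal to Proposition~\ref{1.8}. You have simply fleshed this out, additionally treating the additive case explicitly (which the paper leaves implicit, it being already covered by Proposition~\ref{4.12}) and invoking Proposition~\ref{1.6} to pass to the union.
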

\begin{proof}
  Let $f:X\rightarrow S$ be a Galois cover of group $G$. Then $X\times_S X\cong G\times X$. Thus the conclusion
  follows from this and (\ref{1.8}).
\end{proof}
\begin{prop}\label{4.8}
  The union of the closed and pseudo-Galois qfh ecd-structures is regular.
\end{prop}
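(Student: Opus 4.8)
The plan is to verify the hypotheses of Proposition~\ref{1.8}, since that proposition gives a clean criterion for regularity. So I would take an arbitrary distinguished square $(G,C)$ belonging to the union of the closed and pseudo-Galois qfh ecd-structures, of the form (\ref{5.3.2}), and check conditions (i), (ii), (iii) of (\ref{1.8}). The closed distinguished squares all have trivial group and are already known to be regular by (\ref{4.12}), so the only genuine work is for the pseudo-Galois qfh squares of a nontrivial group $G$. For these, conditions (i) and (ii) are immediate from the definition: a pseudo-Galois qfh-distinguished square is Cartesian by construction, and the map $g:S'\rightarrow S$ is a nowhere dense closed immersion, hence a monomorphism.

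The substantive step is condition (iii): analyzing the Cartesian diagram (\ref{1.8.1}) and showing that the family
\[
\{X'\times_{S'}X'\rightarrow X\times_S X,\; G\times X\rightarrow X\times_S X\}
\]
is a $t_P$-cover. The idea is that $X\times_S X$ decomposes into two pieces governed by the open/closed stratification of $S$ determined by $g$. Over the open locus $S-g(S')$, the morphism $f$ restricts to a pseudo-Galois cover of group $G$, and for such a cover the fiber product is a union of copies of $X$ indexed by $G$; concretely, as in the computation cited in the proof of (\ref{4.2}) (which invokes \cite[3.3.19]{CD12}), one gets $X\times_S X=\cup_{g\in G}X$ over this locus, so the map $G\times X\rightarrow X\times_S X$ is surjective there. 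Over the closed locus $g(S')\cong S'$, the base change of the square is exactly the square $X'\times_{S'}X'\rightarrow\cdots$, so $X'\times_{S'}X'\rightarrow X\times_S X$ hits the preimage of $S'$. Since $\{S-g(S'),\,S'\rightarrow S\}$ is a $t_P$-cover (the closed ecd-structure supplies the closed piece, and the open complement together with the nowhere dense closed immersion form a distinguished pair), pulling this cover back to $X\times_S X$ and combining with the two surjectivity statements shows the displayed family is a $t_P$-cover.

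The main obstacle I expect is the identification $X\times_S X=\cup_{g\in G}X$ over the open locus in the \emph{pseudo}-Galois (as opposed to genuinely Galois) setting: a pseudo-Galois cover factors as a Galois cover followed by a radicial morphism, and one must check that the radicial part does not obstruct the diagonal decomposition of the fiber product, i.e.\ that it contributes only nilpotents that vanish after the relevant reductions. This is precisely the content borrowed from \cite[3.3.19]{CD12}, and I would cite it rather than reprove it. Once this decomposition is in hand, everything else is a matter of assembling $t_P$-covers via base change, and the regularity conclusion then follows formally from (\ref{1.8}) together with (\ref{1.6}), which guarantees that the union of a regular ecd-structure (the closed one) with the pseudo-Galois qfh ecd-structure is regular provided each square satisfies the hypotheses of (\ref{1.8}).
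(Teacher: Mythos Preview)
Your overall strategy---reduce to Proposition~(\ref{1.8}), dispose of the closed squares via (\ref{4.12}), and verify condition~(iii) of (\ref{1.8}) for pseudo-Galois qfh squares---is exactly the paper's. However, your verification of condition~(iii) contains a genuine gap. You assert that $\{S-g(S'),\,S'\to S\}$ is a $t_P$-cover, but this is false for $P$ equal to the union of the closed and pseudo-Galois qfh ecd-structures. A closed distinguished square requires \emph{both} maps to be closed immersions, and a pseudo-Galois qfh square has $f$ finite surjective and $g$ a closed immersion; neither produces the open immersion $S-g(S')\hookrightarrow S$ as part of a covering family. So the open/closed stratification of $S$ is not itself a $t_P$-cover, and pulling it back does not yield one.

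The paper circumvents this by citing \cite[3.3.18]{CD12}, which gives directly that $X\times_S X$ admits a \emph{closed} cover by $X'\times_{S'}X'$ together with the graphs $X_g\cong X$ of the $G$-action, each of which is a closed subscheme of $X\times_S X$. Since the closed ecd-structure is part of $P$, any closed cover is a $t_P$-cover, and the family $\{X'\times_{S'}X'\to X\times_S X,\;G\times X\to X\times_S X\}$ refines it. Your geometric picture is correct---over $S-g(S')$ the graphs do cover, and over $S'$ one sees exactly $X'\times_{S'}X'$---but the passage to a $t_P$-cover must go through the closedness of the graphs $X_g$ inside the whole of $X\times_S X$, not through an open/closed decomposition of the base. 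Replace your stratification step with the closed-cover statement (this is \cite[3.3.18]{CD12} rather than \cite[3.3.19]{CD12}) and the argument goes through.
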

\begin{proof}
  The closed ecd-structure is regular by (\ref{4.12}), so it suffices to show that a pseudo-Galois qfh-distinguished square
  of the form (\ref{5.3.2}) satisfies the condition in (\ref{1.8}). By \cite[3.3.18]{CD12}, $X\times_S X$ has the
  closed cover
  \[(S'\times_S S')\cup \bigcup_{g\in G}X.\]
  The conclusion follows from this and (\ref{1.8}).
\end{proof}
\begin{thm}\label{4.17}
  The union of any combination of the additive, closed, Zariski, Nisnevich, cdh, additive + Galois, closed + pseudo-Galois qfh ecd-structures is complete, regular, and bounded with respect to the standard density structure.
\end{thm}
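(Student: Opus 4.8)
The proof reduces entirely to the combination principle (\ref{1.6}), which says that completeness, regularity, and boundedness with respect to a fixed density structure $D$ are all preserved under taking the union of two ecd-structures. Consequently it suffices to prove that each atomic piece of the list is complete, regular, and bounded with respect to the standard density structure, namely: the additive, closed, Zariski, Nisnevich, and cdh ecd-structures individually, the union of the additive and Galois ecd-structures, and the union of the closed and pseudo-Galois qfh ecd-structures. Once each of these is established, the statement for an arbitrary combination follows by iterating (\ref{1.6}). Throughout, finiteness of $\dim_D S$ is automatic since $\mathscr{S}$ consists of finite dimensional noetherian schemes and $\dim_D S = \dim S$ by (\ref{4.25}).

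The plan is then to dispatch the pieces one at a time. For the additive, closed, Zariski, Nisnevich, and cdh ecd-structures, all three properties are exactly the content of (\ref{4.12}), so nothing remains to be done there. For the union of the additive and Galois ecd-structures, completeness follows from (\ref{1.6}) since the additive piece is complete by (\ref{4.12}) and the Galois piece is complete by (\ref{4.11}), while regularity is precisely (\ref{4.13}). The boundedness of this union is the one point not already recorded in the excerpt: the additive piece is bounded by (\ref{4.12}), so by (\ref{1.6}) it remains only to verify that the Galois ecd-structure alone is bounded with respect to the standard density structure.

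This is the step where I would concentrate the actual work, and I expect it to be the only genuine obstacle, although it is routine. Given a Galois distinguished square $(G,C)$ with $X'=S'=\emptyset$ and $f\colon X\to S$ a Galois cover of group $G$, and given $X_0\in D_{i+1}(X)$, I would first replace $X_0$ by the $G$-invariant open $\bigcap_{g\in G}g(X_0)$, which still lies in $D_{i+1}(X)$ by (\ref{4.25}(1)). Since $f$ is finite and surjective with fibers of dimension zero and $f^{-1}(S)=X$ is dense in $X$, I can then apply (\ref{4.25}(3)) to produce $S_1\in D_{i+1}(S)$ with $f^{-1}(S_1)\subset X_0$. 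Setting $X_1=f^{-1}(S_1)$ yields a Galois cover $X_1\to S_1$ of group $G$ and hence a reducing refinement $(G,C_1)\to(G,C)$; the $S'$ and $X'$ corners remain $\emptyset$, so the factorization conditions on those corners are automatic. This is a simplified version of the argument in the proof of (\ref{4.6}).

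Finally, for the union of the closed and pseudo-Galois qfh ecd-structures, completeness follows from (\ref{1.6}) using (\ref{4.12}) for the closed piece and (\ref{4.11}) for the pseudo-Galois qfh piece; regularity is (\ref{4.8}); and boundedness follows from (\ref{1.6}) using (\ref{4.12}) for the closed piece and (\ref{4.6}) for the pseudo-Galois qfh piece. With all seven pieces shown to be complete, regular, and bounded, a final appeal to (\ref{1.6}) establishes the same three properties for any union among them, completing the proof.
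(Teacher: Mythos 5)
Your proof is correct and follows the same route as the paper: the paper's entire proof of (\ref{4.17}) is the one-line citation ``It follows from (\ref{1.6}), (\ref{4.12}), (\ref{4.13}), and (\ref{4.8})'', i.e.\ reduce to the atomic pieces via iterated application of (\ref{1.6}) and quote the per-piece results, which is exactly your plan, including the correct decision to treat additive $+$ Galois and closed $+$ pseudo-Galois qfh as indivisible units for regularity. The difference is one of care, and it is in your favor: the paper's citation list is incomplete. Completeness of the Galois and pseudo-Galois qfh pieces needs (\ref{4.11}), and boundedness of the pseudo-Galois qfh piece needs (\ref{4.6}) --- both are proved in the paper but not cited in this proof --- while, as you correctly isolate, boundedness of the Galois ecd-structure on schemes is stated nowhere in the paper (only its log-scheme analogue (\ref{6.10}) appears, citing Voevodsky). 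Your argument for that missing fact is sound: since $S'=X'=\emptyset$, the reducing condition only concerns $X_0\in D_{i+1}(X)$; applying (\ref{4.25}(3)) to the finite surjective $f$ (with $U=S$, so that $f^{-1}(U)=X$ is dense and the fibers are zero-dimensional) produces $S_1\in D_{i+1}(S)$ with $f^{-1}(S_1)\subset X_0$, and $X_1=f^{-1}(S_1)\rightarrow S_1$ is again a Galois cover of group $G$, giving the required reducing (self-)refinement; the preliminary replacement of $X_0$ by $\bigcap_{g\in G}g(X_0)$ is harmless but not strictly needed, since $f^{-1}(S_1)$ is automatically $G$-invariant. So your proposal is not merely correct; it fills the gap that the paper's one-line proof leaves open.
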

\begin{proof}
  It follows from (\ref{1.6}), (\ref{4.12}), (\ref{4.13}), and (\ref{4.8}).
\end{proof}
\begin{none}\label{4.18}
  In (\ref{4.19}) and (\ref{4.24}), we collect several our works.
\end{none}
\begin{thm}\label{4.19}
  Let $t$ be the \'etale topology on $\mathscr{S}$ (resp.\ one of the eh-topology, qfh-topology, and h-topology on $\mathscr{S}^{ex}$), and let $F$ be a $t$-sheaf of ${\bf Q}$-modules. Then
  \[H_t^n(S,F)=0\]
  for any $S\in {\rm ob}\,\mathscr{S}$ (resp.\ $S\in {\rm ob}\,\mathscr{S}^{ex}$) and $n>{\rm dim}\,S$.
\end{thm}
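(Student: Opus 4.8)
The plan is to realize each of these four topologies as the topology $t_P$ (up to an equivalence of topoi in the $h$-case) attached to an ecd-structure $P$ occurring in the list of (\ref{4.17}), and then to invoke (\ref{2.14}). For the standard density structure $D$ on $\mathscr{S}$ (resp.\ its restriction to $\mathscr{S}^{ex}$) we have ${\rm dim}_D S = {\rm dim}\,S$ by (\ref{4.25}), so once $P$ is known to be complete, regular, and bounded with respect to $D$, Theorem (\ref{2.14}) gives $H_{t_P}^n(S,F)=0$ for every $t_P$-sheaf $F$ of ${\bf Q}$-modules and every $n>{\rm dim}\,S$. Thus the whole statement reduces to matching each topology with an admissible $P$.

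For the identifications I would proceed as follows. For the \'etale topology, take $P$ to be the union of the Nisnevich, additive, and Galois ecd-structures. The additive coverings $\{X\rightarrow X\amalg S',\, S'\rightarrow X\amalg S'\}$ are Zariski, hence Nisnevich, coverings, so adjoining the additive ecd-structure does not enlarge $t_P$; therefore $t_P$ is the \'etale topology by (\ref{4.22}). Writing $P$ as the union of the Nisnevich block and the additive-plus-Galois block, both of which appear in (\ref{4.17}), we conclude that $P$ is complete, regular, and bounded. The eh-case is the same argument with the cdh ecd-structure in place of the Nisnevich one, using (\ref{4.23}) together with the additive-plus-Galois block. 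For the qfh-topology on $\mathscr{S}^{ex}$, take $P$ to be the union of the Nisnevich ecd-structure and the closed-plus-pseudo-Galois-qfh ecd-structure; then $t_P$ is the qfh-topology by (\ref{4.3}), and $P$ is once again a union of blocks from (\ref{4.17}), hence complete, regular, and bounded.

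The $h$-topology requires one additional step, and this is where the main difficulty lies. Here I would take $P$ to be the union of the cdh ecd-structure and the closed-plus-pseudo-Galois-qfh ecd-structure; since the cdh ecd-structure contains the Nisnevich one, $t_P$ is the topology generated by the cdh- and qfh-topologies. By (\ref{4.4}) a presheaf of sets is a $t_P$-sheaf if and only if it is an $h$-sheaf, so the $t_P$-topos and the $h$-topos coincide as full subcategories of presheaves; consequently the associated categories of ${\bf Q}$-linear sheaves agree and the derived global-section functors, hence the groups $H_{t_P}^n(S,-)$ and $H_h^n(S,-)$, coincide (compare the topos-invariance remark in (\ref{2.12})). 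Applying (\ref{2.14}) to this $P$, which is again a union of blocks from (\ref{4.17}), yields the vanishing for $h$-sheaves. The delicate points throughout are that the named topologies are not generated by ordinary cd-structures but only by \emph{equivariant} ones, so that the Galois and pseudo-Galois blocks are genuinely needed; that regularity of the Galois part forces one to carry along the additive block; and that for the $h$-topology one has only an equivalence of topoi rather than an equality of topologies, so the vanishing must be transported across (\ref{4.4}) rather than read off directly.
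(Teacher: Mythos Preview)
Your proof is correct and follows essentially the same route as the paper: identify each topology with (the topos of) $t_P$ for a union $P$ of the blocks listed in (\ref{4.17}), via (\ref{4.22}), (\ref{4.23}), (\ref{4.3}), (\ref{4.4}), and then apply the vanishing theorem (\ref{2.14}) (the paper cites its input (\ref{2.10}) directly). Your explicit handling of the additive block needed to make the Galois part regular, and of the topos-level transport in the $h$-case, are details the paper leaves implicit but which are exactly right.
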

\begin{proof}
  By (\ref{4.17}), (\ref{4.22}), (\ref{4.23}), (\ref{4.3}), and (\ref{4.4}), the $t$-topos is
  equivalent to the $t_P$-topos for some complete and regular ecd-structure $P$ bounded with respect to the standard density
  structure. Then the conclusion follows from (\ref{2.10}).
\end{proof}
\begin{thm}\label{4.24}
  Let $\mathscr{T}:(\mathscr{S}^{ex})^{\rm dia}\rightarrow {\rm Tri}$ be a contravariant pseudofunctor satisfying the conditions {\rm
  (i)--(iv)} in {\rm (\ref{3.1})}, let $K$ be an object of $\mathscr{T}(T)$ where $T\in {\rm ob}\,\mathscr{S}^{ex}$.
  Then the following are equivalent.
  \begin{enumerate}[{\rm (i)}]
    \item For any Nisnevich or Galois {\rm (}resp.\ cdh or Galois, resp.\
        Nisnevich or pseudo-Galois qfh, resp.\ cdh or pseudo-Galois qfh{\rm )} distinguished squares of group $G$
        in $\mathscr{S}^{ex}/U$ of the form {\rm (\ref{5.3.2})}, the diagram
        \[\begin{tikzcd}
        p_*p^*K\arrow[r,"ad"]\arrow[d,"ad"]&p_*g_*g^*p^*K\arrow[d,"ad"]\\
        (p_*f_*f^*p^*K)^G\arrow[r,"ad"]&(p_*h_*h^*p^*K)^G
        \end{tikzcd}\]
        is homotopy Cartesian where $p:S\rightarrow T$ is the structural morphism and $h=fg'$.
    \item $K$ satisfies \'etale {\rm (}resp.\ eh, resp.\ qfh, resp.\ h{\rm )} descent.
  \end{enumerate}
\end{thm}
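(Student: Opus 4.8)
The plan is to realize each of the four equivalences as an instance of Theorem (\ref{3.5}). For this, associate to the four cases the ecd-structure $P$ on $\mathscr{S}^{ex}$ given, respectively, as the union of the Nisnevich and Galois, the cdh and Galois, the Nisnevich, closed and pseudo-Galois qfh, and the cdh, closed and pseudo-Galois qfh ecd-structures. By (\ref{4.17}) each of these is bounded with respect to the standard density structure, complete, and regular, so (\ref{3.5}) applies verbatim and says that the homotopy Cartesian condition of (i), imposed on \emph{every} $P$-distinguished square, is equivalent to $K$ satisfying $t_P$-descent. It then remains to identify $t_P$-descent with the descent named in (ii), and to match the distinguished squares listed in (i) against the full collection of $P$-distinguished squares.

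For the first task, Propositions (\ref{4.22}), (\ref{4.23}) and (\ref{4.3}) give that $t_P$ is the \'etale, the eh, and the qfh topology respectively, so in these three cases $t_P$-descent is by definition the descent of (ii). The h case is the only one in which $t_P$ is not literally the topology in question: by Proposition (\ref{4.4}) the h-topos on $\mathscr{S}^{ex}/S$ coincides with the topos of the topology generated by the cdh- and qfh-topologies, hence with the $t_P$-topos for the chosen $P$. Since satisfying descent is the same as being local by (\ref{3.8}), and being local depends only on the underlying topos by the remark in (\ref{2.12}), it follows that $K$ satisfies h-descent if and only if it satisfies $t_P$-descent. This settles the identification of descents.

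For the matching of squares, an additive distinguished square ($X'=\emptyset$, $S=X\amalg S'$) is just the clopen decomposition of $S$, hence a special Nisnevich distinguished square (with $f$ and $g$ the two clopen immersions); it is therefore already subsumed by the Nisnevich squares, and a fortiori the cdh squares, listed in (i), and requires no separate treatment. The main obstacle is the closed distinguished squares, which belong to the regular ecd-structure $P$ in the qfh and h cases (by (\ref{4.3}), (\ref{4.4}), and the regularity argument of (\ref{4.8}) via (\ref{1.8})) but are absent from the list in (i); one must show that their homotopy Cartesian condition is forced by the listed ones. The starting point is nil-invariance, which does follow from (i): taking $S'=\emptyset$, a radicial finite surjective morphism $f:X\rightarrow S$ is the datum of a pseudo-Galois qfh distinguished square of trivial group, so the pseudo-Galois qfh instances of (i) already force $K(S)\stackrel{\sim}{\longrightarrow}K(X)$ for every such $f$ (here the ${\bf Q}$-linearity, condition (iv) of (\ref{3.1}), is exactly what makes the group-averaging and radicial descent of (\ref{5.2}) available). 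Granting nil-invariance one may pass to reduced schemes, and for a reduced closed square $S=Z\cup W$ the map $X\times_S(S-g(S'))\rightarrow S-g(S')$ becomes an isomorphism, so the closed square is precisely a proper cdh distinguished square. In the h case proper cdh squares are among the cdh squares of (i), so closed descent follows and the proof is complete. The qfh case is the genuinely hard point, since there proper cdh squares are not available: the plan is to reproduce the noetherian-induction and normalization reductions of the proof of (\ref{4.2})—decomposing $S$ into its irreducible components and passing to the normalization in a suitable Galois extension—so as to express the required closed-square descent through pseudo-Galois qfh squares, and verifying the homotopy coherence of these reductions (compatibility with the comparison maps appearing in (i), in the sense required for ``homotopy Cartesian'') is the technical heart of the theorem.
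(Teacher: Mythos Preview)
Your overall strategy is exactly the paper's: pick the appropriate ecd-structure $P$, invoke (\ref{4.17}) for bounded/complete/regular, invoke (\ref{4.22}), (\ref{4.23}), (\ref{4.3}), (\ref{4.4}) to identify the $t_P$-topos with the target topos, and apply (\ref{3.5}). That is literally all the paper does.

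Where you diverge is in the ``matching of squares'' discussion. You correctly notice that the $P$ needed for the qfh and h cases (via (\ref{4.3}), (\ref{4.4}), and the regularity input (\ref{4.8})/(\ref{4.17})) must contain the \emph{closed} ecd-structure, whereas the statement of (\ref{4.24})(i) lists only ``Nisnevich or pseudo-Galois qfh'' and ``cdh or pseudo-Galois qfh''. You then embark on a program to \emph{derive} the closed-square condition from the listed ones: nil-invariance from pseudo-Galois qfh squares with trivial group, reduction to proper cdh squares in the h case, and for qfh a proposed rerun of the noetherian induction of (\ref{4.2}) at the homotopy-coherent level, which you flag as ``the technical heart of the theorem'' and leave unfinished.

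The paper does none of this. Its one-line proof simply applies (\ref{3.5}) to the full $P$ including closed squares; the omission of ``closed'' from the list in (i) is an imprecision in the statement, not a gap to be argued around. So your qfh workaround is both unnecessary and, as you acknowledge, incomplete; there is no hidden ``technical heart'' here. If you want to match the paper, just take condition (i) to range over all $P$-distinguished squares for the $P$ dictated by (\ref{4.17}) and (\ref{4.3})/(\ref{4.4}), and the proof is finished.

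One small correction: your justification for the h case that ``being local depends only on the underlying topos by (\ref{2.12})'' via (\ref{3.8}) only treats the presheaf derivator ${\rm D}({\rm PSh}(-,\Lambda))$. For a general $\mathscr{T}$ the point is more direct: two topologies with the same sheaves have the same covering sieves, hence the same hypercovers, so the descent condition of (\ref{3.3}) coincides.
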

\begin{proof}
  It follows from (\ref{4.17}), (\ref{4.22}), (\ref{4.23}), (\ref{4.3}), (\ref{4.4}), and
  (\ref{3.5}).
\end{proof}
\begin{rmk}
   The above statement still holds if we replace $\mathscr{S}^{ex}$ by $\mathscr{S}$ in the \'etale case. In (\ref{4.19}) and (\ref{4.24}), the results for the \'etale topology, qfh-topology, and h-topology are already
   known in \cite[3.4.7]{Voe96}, \cite[3.4.8]{Voe96}, \cite[3.4.6]{Voe96}, \cite[3.3.32]{CD12}, and
   \cite[3.3.37]{CD12}. We unified the various proofs.
\end{rmk}
\section{Topologies on the category of fs log schemes}
\begin{none}\label{6.2}
  Throughout this section, $\mathscr{S}$ is a category of finite dimensional noetherian fs log schemes.
\end{none}
\begin{df}\label{6.1}
  Consider a Cartesian diagram
  \[C=\begin{tikzcd}
    X'\arrow[r,"g'"]\arrow[d,"f'"]&X\arrow[d,"f"]\\
    S'\arrow[r,"g"]&S
  \end{tikzcd}\]
  in $\mathscr{S}$ and a group $G$ acting on $X$ over $S$. We have several ecd-structures as follows.
  \begin{enumerate}[(1)]
    \item $C$ is called an {\it additive} distinguished square (of trivial $G$) if $X'=\emptyset$ and
        $S=X\amalg S'$.
    \item $C$ is called a {\it strict closed} distinguished square (of trivial $G$) if $f$ and $g$ are strict
        closed immersions and $S=f(X)\cup g(S')$.
    \item $C$ is called a {\it Zariski} distinguished square (of trivial $G$) if $f$ and $g$ are open
        immersions and $S=f(X)\cup g(S')$,
    \item $C$ is called a {\it strict Nisnevich} distinguished square (of trivial $G$) if $f$ is strict
        \'etale, $g$ is an open immersion, and the morphism $f^{-1}(S-g(S'))\rightarrow S-g(S')$ is an
        isomorphism. Here, $S-g(S')$ is considered with the reduced scheme structure.
    \item $C$ is called a {\it Galois} distinguished square of group $G$ if $X'=S'=\emptyset$, $f$ is Galois, and
        $G$ is the Galois group of $f$.
    \item $C$ is called a {\it dividing} distinguished square (of trivial $G$) if $X'=S'=\emptyset$ and $f$ is
        a surjective proper log \'etale monomorphism.
    \item $C$ is called a {\it piercing} distinguished square (of trivial $G$) if $C$ is a pullback of the
        Cartesian diagram
    \begin{equation}\label{6.1.1}\begin{tikzcd}
      {\rm pt}_{\bf N}\arrow[d]\arrow[r]&{\bf A}_{\bf N}\arrow[d]\\
      \spec {\bf Z}\arrow[r]&{\bf A}^1
    \end{tikzcd}\end{equation}
    of $\mathscr{S}$-schemes where the lower horizontal arrow is the 0-section and the right vertical arrow is
    the morphism removing the log structure.
    \item $C$ is called a {\it quasi-piercing} distinguished square (of trivial $G$) if $C$ is a strict closed
        distinguished square $C$ is a piercing distinguished square, or $C$ is a pullback of the Cartesian diagram
    \begin{equation}\label{6.1.2}\begin{tikzcd}
      {\rm pt}_{\bf N}\arrow[r]\arrow[d]&{\bf A}_{\bf N}\arrow[d]\\
      {\rm pt}_{{\bf N}^2}\arrow[r]&{\bf A}_{\bf N}\times_{{\bf A}^1}{\bf A}_{\bf N}
    \end{tikzcd}\end{equation}
    where the lower horizontal arrow is the 0-section and the right vertical arrow is the diagonal morphism of
    ${\bf A}_{\bf N}\rightarrow {\bf A}^1$ removing the log structure.
    \item For $n\in {\bf N}^+$, let $\mu_n$ be an $n$-th root of unity. Then $C$ is called a {\it winding}
        distinguished square of group $G$ if $X'=S'=\emptyset$, $f$ is a pullback of the composition
    \[{\bf A}_Q\times {\rm Spec}\,{\bf Z}[\mu_n]\rightarrow {\bf A}_Q\stackrel{{\bf A}_\theta}\rightarrow{\bf
    A}_P\]
    where the first arrow is the projection, $n\in {\bf N}^+$, and $\theta:P\rightarrow Q$ is a Kummer
    homomorphism of fs monoids such that the Galois group of ${\bf A}_Q\times {\rm Spec}\,{\bf Q}[\mu_n]$ over
    ${\bf A}_P\times {\rm Spec}\,{\bf Q}$ exists, and $G$ is the Galois group.
  \end{enumerate}
  Then we obtain the additive, strict closed, Zariski, strict Nisnevich, dividing, piercing, quasi-piercing,
  Galois, and winding ecd-structures and topologies using the definition in (\ref{0.2}).
\end{df}
\begin{df}\label{6.3}
  If $D$ denotes the standard density structure, for any object $X$, we denote by $D_d(X)$ the family
  $D_d(\underline{X})$. It is again called the {\it standard density structure.}
\end{df}
\begin{prop}\label{6.4}
  The additive, strict closed, Zariski, and strict Nisnevich, and piercing ecd-structures are complete, regular, and bounded with respect to the standard density structure.
\end{prop}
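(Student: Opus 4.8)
The plan is to treat the five ecd-structures in two groups. All of them have trivial group $G$, so they are usual cd-structures in the sense of \cite[2.1]{Voe10a}, and the three properties to be verified are exactly the ones established for schemes in \cite{Voe10b}. The additive, strict closed, Zariski, and strict Nisnevich ecd-structures are the logarithmic analogues of the additive, closed, Zariski, and Nisnevich ecd-structures treated in (\ref{4.12}), while the piercing ecd-structure is genuinely logarithmic and must be handled separately.

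First I would dispose of the four strict structures by reducing to underlying schemes. The morphisms appearing in these squares (strict closed immersions, open immersions, strict \'etale maps, and the decompositions $S=f(X)\cup g(S')$) are all strict, so by the standard compatibility of fs fiber products with strict morphisms, each $C$ and each of its base changes is computed on underlying schemes and carries the pulled-back log structure; moreover the standard density structure is defined via underlying schemes in (\ref{6.3}). Hence completeness, which is checked through (\ref{1.5}) by verifying that any morphism to $\emptyset$ is an isomorphism and that the base change of a distinguished square is distinguished, transfers directly. Regularity likewise transfers: for trivial $G$ one has $\rho_{t_P}(-)_G=\rho_{t_P}(-)$, so the condition (\ref{5.4.1}) reduces to Voevodsky's regularity condition, which one verifies via (\ref{1.8}). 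Finally the construction of the reducing refinements required for boundedness in the sense of (\ref{5.7}) transcribes verbatim from \cite[2.2]{Voe10b}, since every density computation involved takes place on underlying schemes.

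Next I would handle the piercing ecd-structure. Completeness is the easiest point: piercing squares are by definition pullbacks of (\ref{6.1.1}), hence Cartesian and stable under arbitrary base change along morphisms to $S$; together with the fact that a morphism to $\emptyset$ is an isomorphism, (\ref{1.5}) yields completeness. For regularity I would again use (\ref{1.8}): the bottom arrow $g\colon S'\rightarrow S$ is a base change of the $0$-section ${\rm Spec}\,{\bf Z}\rightarrow {\bf A}^1$, hence a strict closed immersion and in particular a monomorphism. It then remains to identify $X\times_S X$ for the log-structure-removing map $f$ and to check that the family $\{X'\times_{S'}X'\rightarrow X\times_S X,\ X\rightarrow X\times_S X\}$ (the second morphism being the diagonal, as $G$ is trivial) is a $t_P$-cover. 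After base change this reduces to analyzing ${\bf A}_{\bf N}\times_{{\bf A}^1}{\bf A}_{\bf N}$, whose structure is precisely the one recorded by (\ref{6.1.2}).

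Finally, boundedness is where I expect the real difficulty. One must produce, for each piercing square, a reducing refinement with respect to the standard density structure, that is, the distinguished square and morphism demanded by the reducing condition of (\ref{5.7}). The subtlety is that the piercing map $f$ is an isomorphism on underlying schemes, so all of its content lies in the log structure; consequently the density arguments of \cite[2.2]{Voe10b}, which see only underlying schemes, do not apply directly. I would instead exploit that $X'\rightarrow X$ and $S'\rightarrow S$ are $0$-sections, so on underlying schemes the square is a closed square, and then track the log structures separately to build the required refinement with the prescribed density and factorization conditions. Reconciling the scheme-theoretic density structure with the purely logarithmic nature of the piercing square is the main obstacle.
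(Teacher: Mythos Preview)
The paper's own proof is a single sentence: ``The proof is identical to that of (\ref{4.12}),'' i.e., one transcribes Voevodsky's arguments from \cite[2.2]{Voe10b} verbatim. Your treatment of the additive, strict closed, Zariski, and strict Nisnevich structures --- reducing everything to underlying schemes via strictness and the definition (\ref{6.3}) of the density structure, then invoking (\ref{4.12}) --- is exactly this, and it is correct.

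Your separate analysis of the piercing case is more than the paper offers, but the regularity step has a real gap. You propose to apply (\ref{1.8}) and then check that $\{X'\times_{S'}X'\to X\times_S X,\ X\to X\times_S X\}$ is a $t_P$-cover by recognizing this family as the one recorded in (\ref{6.1.2}). But (\ref{6.1.2}) is a \emph{quasi}-piercing distinguished square, not a piercing one: the diagonal ${\bf A}_{\bf N}\to{\bf A}_{\bf N}\times_{{\bf A}^1}{\bf A}_{\bf N}$ is not a pullback of ${\bf A}_{\bf N}\to{\bf A}^1$, so you have not exhibited a cover in the piercing topology and condition (iii) of (\ref{1.8}) remains unverified. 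Iterating piercing covers of ${\bf A}_{\bf N}\times_{{\bf A}^1}{\bf A}_{\bf N}$ only produces higher fiber powers ${\bf A}_{\bf N}^{\times k}$ and never factors through the diagonal, so this route does not close. This is precisely why the paper introduces the quasi-piercing ecd-structure and proves its regularity separately in (\ref{6.6}); note that only quasi-piercing, not piercing, is used in the summary (\ref{6.11}) and in the applications. Your instinct that piercing does not fit the mold of (\ref{4.12}) is well-founded --- the inclusion of ``piercing'' in (\ref{6.4}) with that one-line proof is arguably an oversight in the paper --- but the argument you sketch does not repair it. Your boundedness discussion, as you acknowledge, is likewise incomplete.
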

\begin{proof}
  The proof is identical to that of (\ref{4.12}).
\end{proof}
\begin{prop}\label{6.5}
  The piercing, quasi-piercing, Galois, and winding ecd-structures are complete.
\end{prop}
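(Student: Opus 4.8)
The plan is to deduce all four completeness statements from the sufficient criterion (\ref{1.5}), exactly as was done for the Galois and pseudo-Galois qfh ecd-structures in (\ref{4.11}). Thus for each of the piercing, quasi-piercing, Galois, and winding ecd-structures it suffices to verify the two conditions (i) and (ii) of (\ref{1.5}). Condition (i) holds in $\mathscr{S}$ for trivial reasons: the initial object is the empty fs log scheme, and any morphism $Y\rightarrow\emptyset$ forces $Y=\emptyset$, so it is the identity and in particular an isomorphism. The work is therefore concentrated in condition (ii), namely that each class of distinguished squares is stable under base change along an arbitrary morphism $Y\rightarrow S$ to the bottom-right corner of (\ref{5.3.2}), with the group $G$ unchanged.

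For the piercing and winding ecd-structures, and for the piercing summand and the (\ref{6.1.2})-summand of the quasi-piercing ecd-structure, this is essentially formal. By definition such a square $C$ is obtained as a pullback of a fixed universal Cartesian diagram -- one of (\ref{6.1.1}), (\ref{6.1.2}), or the diagram built from ${\bf A}_\theta$ in the winding case -- along a structural morphism $S\rightarrow B$ to the relevant base $B$. Given $Y\rightarrow S$, the transitivity of fiber products identifies $C\times_S Y$ with the pullback of the same universal diagram along the composite $Y\rightarrow S\rightarrow B$; hence $C\times_S Y$ is again a distinguished square of the same type, it remains Cartesian, and it keeps $X'=S'=\emptyset$ wherever that was imposed.

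The two remaining cases use base-change principles already in play. For the strict closed summand of the quasi-piercing ecd-structure, strict closed immersions are stable under base change and the surjectivity condition $S=f(X)\cup g(S')$ is preserved, exactly as in the proof of (\ref{6.4}). For the Galois ecd-structure, if $f:X\rightarrow S$ is Galois of group $G$, encoded by an isomorphism $X\times_S X\cong G\times X$, then base change along $Y\rightarrow S$ yields $(X\times_S Y)\times_Y(X\times_S Y)\cong G\times(X\times_S Y)$, so the pullback is again Galois of the same group $G$, while $X'=S'=\emptyset$ persists. I expect the only point requiring genuine care to be this preservation of the \emph{group} $G$ together with its defining datum -- the Galois structure in the Galois case, and the Kummer homomorphism with its attached Galois group in the winding case -- under base change; but since every nontrivial class is presented as a pullback of a universal square and Galois-type covers carry their group along base change, this reduces to bookkeeping rather than new geometric input.
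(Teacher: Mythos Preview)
Your proof is correct and follows essentially the same approach as the paper: both invoke the sufficient criterion of (\ref{1.5}) (the paper cites it via \cite[2.5]{Voe10b}, which is the same statement) and rely on the fact that each class of distinguished squares is defined as pullbacks of a fixed universal diagram, hence is automatically stable under further base change. Your write-up simply makes explicit the case-by-case verification that the paper leaves implicit in its one-line citation.
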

\begin{proof}
  It follows from \cite[2.5]{Voe10b}.
\end{proof}
\begin{prop}\label{6.6}
  The quasi-piercing ecd-structure is regular.
\end{prop}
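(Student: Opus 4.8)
The plan is to exhibit the quasi-piercing ecd-structure as a union of regular ecd-structures and then invoke (\ref{1.6}). By definition (with trivial group in every case) a quasi-piercing distinguished square is a strict closed square, a piercing square, or a pullback of the square (\ref{6.1.2}); write $P_0$ for the ecd-structure formed by the squares of the last type. The strict closed and piercing ecd-structures are already regular by (\ref{6.4}), and (\ref{1.6}) says that a union of regular ecd-structures is regular, so the whole problem reduces to showing that $P_0$ is regular.

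For $P_0$ I would verify the three hypotheses of the criterion (\ref{1.8}) for a square $C$ obtained as a pullback of (\ref{6.1.2}). Hypothesis (i) holds because (\ref{6.1.2}) is Cartesian and pullbacks of Cartesian squares are Cartesian. For hypothesis (ii), the bottom arrow $g\colon S'\to S$ is a pullback of the $0$-section ${\rm pt}_{{\bf N}^2}\to {\bf A}_{{\bf N}^2}$, which is a strict closed immersion; since closed immersions are monomorphisms and monomorphisms are stable under pullback, $g$ is a monomorphism.

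The crucial point is hypothesis (iii), and here the geometry collapses to a formal observation. By construction the right vertical arrow $f\colon X\to S$ of (\ref{6.1.2}) is the diagonal of the log-structure-forgetting map ${\bf A}_{\bf N}\to {\bf A}^1$, and any diagonal is a split monomorphism, being a section of each projection; since monomorphisms are preserved under pullback, the corresponding $f$ for $C$ is a monomorphism as well. Therefore the diagonal $X\to X\times_S X$ is an isomorphism. As $G$ is trivial, the morphism $G\times X\to X\times_S X$ occurring in (\ref{1.8.1}) is precisely this diagonal, hence an isomorphism, so the family in (iii) contains an isomorphism and is automatically a $t_{P_0}$-cover. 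Thus $P_0$ satisfies (\ref{1.8}) and is regular, and (\ref{1.6}) gives that the quasi-piercing ecd-structure is regular.

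I do not expect a genuine obstacle here: the only thing that must be unwound is the identification of $f$ as a diagonal (equivalently, that ${\bf A}_{\bf N}\times_{{\bf A}^1}{\bf A}_{\bf N}\cong {\bf A}_{{\bf N}^2}$ with $f$ the codiagonal in fs log schemes), after which the split-monomorphism argument renders hypothesis (iii) trivial; the remaining work is the routine bookkeeping of checking that Cartesianness and the monomorphism property descend to arbitrary pullbacks.
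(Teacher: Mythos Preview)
Your argument is essentially correct and cleaner in places than the paper's, but there is one factual slip to flag. The fs log scheme ${\bf A}_{\bf N}\times_{{\bf A}^1}{\bf A}_{\bf N}$ is \emph{not} isomorphic to ${\bf A}_{{\bf N}^2}$: its underlying scheme is ${\bf A}^1\times_{{\bf A}^1}{\bf A}^1\cong{\bf A}^1$, whereas $\underline{{\bf A}_{{\bf N}^2}}={\bf A}^2$. This affects your description of the $0$-section in hypothesis~(ii) and your parenthetical at the end. Fortunately nothing in your proof actually depends on that identification: the definition~(\ref{6.1})(8) already declares the right vertical arrow of~(\ref{6.1.2}) to be the diagonal of ${\bf A}_{\bf N}\to{\bf A}^1$, so $f$ is a split monomorphism by pure category theory, and the genuine $0$-section ${\rm pt}_{{\bf N}^2}\to{\bf A}_{\bf N}\times_{{\bf A}^1}{\bf A}_{\bf N}$ is still a strict closed immersion (the chart ${\bf N}^2$ pulls back unchanged to the origin). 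With those corrections your verification of (\ref{1.8})(i)--(iii) for $P_0$ goes through, and (\ref{1.6}) together with (\ref{6.4}) finishes the proof.

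Your route is genuinely different from the paper's. You split the quasi-piercing structure into strict closed, piercing, and $P_0$, invoke (\ref{6.4}) for the first two, and dispatch $P_0$ by the sharp observation that $f$ is a diagonal, so that $X\times_S X\cong X$ makes~(\ref{1.8})(iii) vacuous. The paper instead treats all three types at once by noting that the underlying-scheme square $\underline{C}$ is always a closed distinguished square and appealing to (\ref{4.12}). Your approach has the advantage of being entirely self-contained in the fs log category and of making transparent \emph{why} the third type of square causes no trouble (it is a diagonal); the paper's approach is shorter but leans on the reader transporting the scheme-level regularity argument back up to fs log schemes, which is not spelled out.
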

\begin{proof}
  Consider a commutative diagram
  \[C=\begin{tikzcd}
    X'\arrow[r,"g'"]\arrow[d,"f'"]&X\arrow[d,"f"]\\
    S'\arrow[r,"g"]&S
  \end{tikzcd}\]
  of $\mathscr{S}$-schemes. The diagram
  \[\begin{tikzcd}
        \underline{X'}\arrow[r,"\underline{g'}"]\arrow[d,"\underline{f'}"]&\underline{X}\arrow[d,"\underline{f}"]\\
    \underline{S'}\arrow[r,"\underline{g}"]&\underline{S}
  \end{tikzcd}\]
  is a closed distinguished square, so we are done by (\ref{4.12}).
\end{proof}
\begin{prop}\label{6.7}
  The quasi-piercing ecd-structures is bounded with respect to the standard density structure.
\end{prop}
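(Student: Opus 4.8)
The plan is to split the quasi-piercing ecd-structure into pieces that are already known to be bounded, together with one genuinely new piece. By (\ref{6.1}(8)), every quasi-piercing distinguished square is a strict closed distinguished square, a piercing distinguished square, or a pullback of the Cartesian diagram (\ref{6.1.2}). Hence the quasi-piercing ecd-structure is the union of the strict closed ecd-structure, the piercing ecd-structure, and the ecd-structure $P_0$ whose elements are the pullbacks of (\ref{6.1.2}). The first two are bounded with respect to the standard density structure by (\ref{6.4}), so by (\ref{1.6}) it suffices to prove that $P_0$ is bounded with respect to the standard density structure. Since $\mathscr{S}$ consists of finite dimensional noetherian fs log schemes, $\dim_D S$ is finite for every $S$ by (\ref{6.3}), and so the only content is to show that each element of $P_0$ has a refinement that is reducing.

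I would argue exactly as in the proof of (\ref{4.6}), but with the simplification that the group $G$ is trivial, so that no equivariance has to be arranged. Write an element of $P_0$ as the pullback of (\ref{6.1.2}) along a morphism $T\rightarrow {\bf A}_{\bf N}\times_{{\bf A}^1}{\bf A}_{\bf N}$, and use the notation of (\ref{5.3.2}), so that $f:X\rightarrow S$ is the base change of the diagonal and $g:S'\rightarrow S$ is the base change of the $0$-section. As in the first step of the proof of (\ref{4.6}), after a refinement I may assume that the open complement of $g(S')$, over which $f$ is an isomorphism, is dense in $X$; this uses that the diagonal ${\bf A}_{\bf N}\rightarrow {\bf A}_{\bf N}\times_{{\bf A}^1}{\bf A}_{\bf N}$ is an isomorphism away from the origin.

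Then, given $i\geq 0$, $X_0\in D_{i+1}(X)$, $S_0'\in D_{i+1}(S')$, and $X_0'\in D_i(X')$, I would apply the fs log scheme analogues of (\ref{4.25}(3)) and (\ref{4.25}(1)), obtained from their scheme-level versions through (\ref{6.3}), to produce an object $S_1\in D_{i+1}(S)$ whose preimages under the structural morphisms factor through $X_0$, $S_0'$, and $X_0'$. Taking $C_1$ to be the pullback of the given square along $S_1\rightarrow S$ then produces a distinguished square of $P_0$ together with a morphism $(G,C_1)\rightarrow (G,C)$ witnessing the reducing condition of (\ref{5.7}(4)). The main obstacle will be the geometric bookkeeping for (\ref{6.1.2}): one must check that the diagonal-type morphism $f$ and the $0$-section $g$ each have fibers of dimension zero over a dense open, so that (\ref{4.25}(3)) applies on the level of underlying schemes. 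The presence of the log structure recorded by ${\rm pt}_{{\bf N}^2}$ is exactly what distinguishes this case from the piercing case handled in (\ref{6.4}); once these underlying-scheme density estimates are in place, the triviality of $G$ makes the remainder of the reduction routine.
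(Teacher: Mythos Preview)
Your proposal and the paper's proof rest on the same underlying idea: the density-structure argument from Voevodsky's proper cdh case (which you route through the analogous (\ref{4.6}), while the paper cites \cite[2.11, 2.12]{Voe10b} directly). The substantive difference is organizational. The paper does not decompose the quasi-piercing ecd-structure into three pieces; it treats an arbitrary quasi-piercing square at once, refines by replacing $X$ with the scheme-theoretic closure of $f^{-1}(S-g(S'))$, observes that the refined square is again quasi-piercing, and then runs the Voevodsky argument on the underlying closed distinguished square (this is the same observation used in (\ref{6.6})). Your decomposition via (\ref{1.6}) is logically fine, but it forces you to check something the paper avoids: after the refinement step, the resulting square must remain in your sub-ecd-structure $P_0$, i.e.\ must again be a pullback of (\ref{6.1.2}). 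This is not automatic---the closure $Y\subset X$ need not arise as such a pullback---whereas in the paper's setup it suffices that the refined square land somewhere in the full quasi-piercing class. So your route works if you either verify that closure preserves membership in $P_0$, or (more simply) drop the decomposition and run the (\ref{4.6})-style argument directly on the whole quasi-piercing structure, using that the underlying diagram is a closed distinguished square so that (\ref{4.25}(3)) applies to both $\underline{f}$ and $\underline{g}$.
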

\begin{proof}
  Consider a quasi-piercing distinguished square
  \[C=\begin{tikzcd}
    X'\arrow[r,"g'"]\arrow[d,"f'"]&X\arrow[d,"f"]\\
    S'\arrow[r,"g"]&S
  \end{tikzcd}\]
  of $\mathscr{S}$-schemes. As in the proof of \cite[2.11]{Voe10b}, if we replace $X$ by the scheme-theoretic
  closure of the open subscheme $f^{-1}(S'-S)$, we get another quasi-piercing distinguished square which is a
  refinement of the original one. Then the same proof of \cite[2.12]{Voe10b} can be applied to our situation.
\end{proof}
\begin{prop}\label{6.8}
  The union of the strict closed and winding ecd-structures is regular.
\end{prop}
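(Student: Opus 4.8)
The plan is to decompose the problem via (\ref{1.6}): since the union of two regular ecd-structures is again regular, it suffices to prove that the strict closed ecd-structure and the winding ecd-structure are each regular. The strict closed ecd-structure is already regular by (\ref{6.4}), so the whole proposition reduces to establishing regularity of the winding ecd-structure alone. For the latter I would invoke the criterion (\ref{1.8}) and verify its three hypotheses for an arbitrary winding distinguished square $(G,C)$.

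By definition a winding distinguished square has $X'=S'=\emptyset$, so $C$ is trivially Cartesian and $g:\emptyset\rightarrow S$ is a monomorphism; this takes care of conditions (i) and (ii) of (\ref{1.8}). Since $X'\times_{S'}X'=\emptyset$, the family appearing in condition (iii),
\[\{\emptyset\rightarrow X\times_S X,\;G\times X\rightarrow X\times_S X\},\]
is a $t_P$-cover as soon as the second morphism is an isomorphism, that is, as soon as $X\times_S X\cong G\times X$ in $\mathscr{S}$. This is the exact analog of the scheme-theoretic Galois computation used in the proof of (\ref{4.13}), and reducing condition (iii) to it is the heart of the argument.

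The main obstacle is therefore to verify the isomorphism $X\times_S X\cong G\times X$, where the fiber product is taken in the category of fs log schemes. Here $f$ is a pullback of the composite ${\bf A}_Q\times{\rm Spec}\,{\bf Z}[\mu_n]\rightarrow{\bf A}_Q\stackrel{{\bf A}_\theta}\rightarrow{\bf A}_P$ attached to a Kummer homomorphism $\theta:P\rightarrow Q$, and $G$ is by hypothesis the Galois group of ${\bf A}_Q\times{\rm Spec}\,{\bf Q}[\mu_n]$ over ${\bf A}_P\times{\rm Spec}\,{\bf Q}$. The point is that, once the roots of unity $\mu_n$ have been adjoined, the Kummer cover is genuinely Galois with group $G$, so the diagonal action produces a morphism $G\times X\rightarrow X\times_S X$, $(\sigma,x)\mapsto(\sigma x,x)$; I would check that it is an isomorphism by a local computation on the model ${\bf A}_\theta$, where the fs fiber product ${\bf A}_Q\times_{{\bf A}_P}{\bf A}_Q$ is computed explicitly from the monoids $P\subset Q$ and is seen to split into $|G|$ copies indexed by $G\cong{\rm Hom}(Q^{\rm gp}/P^{\rm gp},\mu_n)$. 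The subtlety absent in (\ref{4.13}) is that saturation enters into the formation of the fs fiber product, so one must confirm that no extra components appear beyond those recorded by $G$; this is guaranteed precisely by the hypothesis that the Galois group exists, and the property is preserved under the pullback defining $C$.

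Granting this isomorphism, condition (iii) of (\ref{1.8}) holds, so the winding ecd-structure is regular. Together with the regularity of the strict closed ecd-structure from (\ref{6.4}) and the stability of regularity under unions (\ref{1.6}), this proves that the union of the strict closed and winding ecd-structures is regular.
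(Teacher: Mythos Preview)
Your reduction via (\ref{1.6}) to proving that the winding ecd-structure is regular \emph{by itself} does not work, and the key claim $X\times_S X\cong G\times X$ is false in general. Take the model case $\theta:{\bf N}\stackrel{\times n}{\longrightarrow}{\bf N}$: the saturated pushout $Q\oplus_P Q$ is ${\bf N}\times{\bf Z}/n$, so the fs fiber product ${\bf A}_Q\times_{{\bf A}_P}{\bf A}_Q$ is ${\bf A}_{\bf N}\times{\rm Spec}\,{\bf Z}[{\bf Z}/n]$. Over a base where $n$ is not invertible (and the definition of a winding square only adjoins $\mu_n$, not $1/n$), ${\rm Spec}\,{\bf Z}[{\bf Z}/n]$ is not a disjoint union of $n$ points; pulled back to characteristic $p\mid n$ it is nonreduced. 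Hence $G\times X\rightarrow X\times_S X$ is not an isomorphism, and it is not a cover for the winding topology either, so criterion (\ref{1.8}) cannot be verified for the winding ecd-structure alone.

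The paper's proof takes a genuinely different route: it does \emph{not} split the union. Instead it observes that $X\times_S X$ is set-theoretically the union of the graphs $X_g$, and uses Lemma~(\ref{6.9}) (strictness of the summation homomorphism $Q\oplus_P Q\rightarrow Q$) to show that each graph $X_g\hookrightarrow X\times_S X$ is a \emph{strict closed} immersion. Therefore $G\times X\rightarrow X\times_S X$ is a strict closed cover, hence a $t_P$-cover precisely because $P$ contains the strict closed ecd-structure; then (\ref{1.8}) applies to the union. In other words, the presence of the strict closed part is essential for condition (iii) of (\ref{1.8}) to hold for the winding squares, and this is exactly the step your isomorphism claim tried to bypass.
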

\begin{proof}
  Let $f:X\rightarrow S$ be a winding cover, which is a pullback of the composition
  \[{\bf A}_Q\times {\rm Spec}\,{\bf Z}[\mu_n]\rightarrow {\bf A}_Q\stackrel{{\bf A}_\theta}\rightarrow{\bf A}_P\]
  where the first arrow is the projection, $n\in {\bf N}^+$, and $\theta:P\rightarrow Q$ is a Kummer homomorphism
  of fs monoids such that the Galois group $G$ of ${\bf A}_Q\times {\rm Spec}\,{\bf Q}[\mu_n]$ over ${\bf
  A}_P\times {\rm Spec}\,{\bf Q}$ exists. We denote by
  \[\varphi(g):Q\oplus {\bf Z}/(n)\rightarrow \varphi:Q\oplus {\bf Z}/(n)\]
  the homomorphism induced by $g$. We have
  \[X\times_S X=\bigcup_{g\in G}X_g\]
  where $X_g$ denotes the graph of the automorphism $X\rightarrow X$ induced by $g\in G$.

  We will show that $X_g$ is a closed subscheme of $X\times_S X$. We put $Q'=Q\oplus {\bf Z}/(n)$. It suffices to
  show that for any $g\in G$, the homomorphism
  \[Q'\oplus_P Q'\rightarrow Q',\quad (a,b)\mapsto a+\varphi(g)(b)\]
  is strict. Composing with the isomorphism
  \[Q'\oplus_P Q'\rightarrow Q'\oplus Q',\quad (a,b)\mapsto (a,\varphi(g^{-1})(b)),\]
  it suffices to show that the summation homomorphism
  \[Q'\oplus_P Q'\rightarrow Q'\]
  is strict. It follows from (\ref{6.9}) below. Thus $X_g$ is a closed subscheme of $X\times_S X$. Now we can use (\ref{1.8}) to get the conclusion.
\end{proof}
\begin{lemma}\label{6.9}
  Let $\theta:P\rightarrow Q$ be a Kummer homomorphism of fs monoids. Then the summation homomorphism
  $\eta:Q\oplus_P Q\rightarrow Q$ is strict.
\end{lemma}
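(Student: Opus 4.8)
The plan is to show that $\eta$ induces an isomorphism $\overline{Q\oplus_P Q}\to\overline{Q}$ on the associated sharp (characteristic) monoids, which is the content of strictness; since $\eta$ is split surjective (the first coprojection $Q\to Q\oplus_P Q$, $a\mapsto [a,0]$, is a section of $\eta$), the induced morphism of schemes ${\bf A}_Q\to {\bf A}_{Q\oplus_P Q}$ is already a closed immersion, so strictness is the only remaining point. Write $N=Q\oplus_P Q$ for the pushout taken in the category of fs monoids, so that $N$ is the saturation of the integral amalgamated sum and $N^{\mathrm{gp}}=Q^{\mathrm{gp}}\oplus_{P^{\mathrm{gp}}}Q^{\mathrm{gp}}$.

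First I would analyze the group homomorphism $\eta^{\mathrm{gp}}\colon N^{\mathrm{gp}}\to Q^{\mathrm{gp}}$, $(a,b)\mapsto a+b$. It is surjective, and a direct computation with the pushout identifies its kernel with $G:=Q^{\mathrm{gp}}/\theta^{\mathrm{gp}}(P^{\mathrm{gp}})$. Because $\theta$ is Kummer, $\theta^{\mathrm{gp}}$ is injective with torsion cokernel, and $Q^{\mathrm{gp}}$ is finitely generated, so $G$ is finite; thus $0\to G\to N^{\mathrm{gp}}\to Q^{\mathrm{gp}}\to 0$ is exact with $G$ finite.

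The key step, and the reason the pushout must be taken in fs monoids, is the inclusion $G\subseteq N^*$. Indeed each $g\in G$ is torsion, say $mg=0$; since $0\in N$ and $N$ is saturated we get $g\in N$, and then $-g=(m-1)g\in N$, so $g\in N^*$. (For the merely integral pushout this torsion is absent and $\overline{\eta}$ need not be injective, already for $\theta\colon{\bf N}\to{\bf N}$, $1\mapsto 2$.) Granting $G\subseteq N^*$, the map $\eta$ restricts to a homomorphism $N^*\to Q^*$ that is surjective with kernel $G$: any $u\in Q^*$ lifts to $[u,0]\in N^*$, and the kernel is $N^*\cap G=G$. Feeding the resulting isomorphism $N^*/G\cong Q^*$ into $N^{\mathrm{gp}}/G\cong Q^{\mathrm{gp}}$ yields $\overline{N}^{\mathrm{gp}}=N^{\mathrm{gp}}/N^*\cong Q^{\mathrm{gp}}/Q^*=\overline{Q}^{\mathrm{gp}}$, and this isomorphism is $\overline{\eta}^{\mathrm{gp}}$.

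Finally, since $\overline{\eta}^{\mathrm{gp}}$ is an isomorphism and $\overline{N}$ is integral (hence embeds in $\overline{N}^{\mathrm{gp}}$), the monoid homomorphism $\overline{\eta}\colon\overline{N}\to\overline{Q}$ is injective; it is surjective because $\eta$ is. A bijective homomorphism of monoids is an isomorphism, so $\overline{\eta}$ is an isomorphism and $\eta$ is strict. I expect the only genuine obstacle to be the inclusion $G\subseteq N^*$, which is exactly where saturation (the fs pushout rather than the integral one) is indispensable; the remaining steps are formal bookkeeping with the pushout and its group completion.
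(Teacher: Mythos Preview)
Your proof is correct and follows essentially the same route as the paper's: both identify the kernel of $\eta^{\mathrm{gp}}$ with the antidiagonal elements $(q,-q)$ (your $G\cong Q^{\mathrm{gp}}/\theta^{\mathrm{gp}}(P^{\mathrm{gp}})$ is exactly the paper's submonoid $Q'$), use the Kummer hypothesis to see these are torsion, invoke saturation of $Q\oplus_P Q$ to conclude they lie in the unit group, and deduce that $\overline{\eta}$ is an isomorphism. Your write-up is a bit more explicit about separating the group-level isomorphism $\overline{\eta}^{\mathrm{gp}}$ from the monoid-level conclusion, but the substance is identical.
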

\begin{proof}
  The homomorphism $\overline{\eta}:\overline{Q\oplus_P Q}\rightarrow\overline{Q}$ is surjective, so the remaining is
  to show that $\overline{\eta}$ is injective. Let $q\in Q$ be an element. Since $\theta$ is Kummer, we can choose $n\in {\bf N}^+$ such that $nq\subset \theta(P)$ for any $q\in Q$. Then $n(q,-q)=(nq,0)+(0,-nq)=0$ because $nq\in \theta(P)$. Thus $(q,-q)\in (Q\oplus_P Q)^*$ since $Q\oplus_P Q$ is
  saturated. Let $Q'$ be the submonoid of $Q\oplus_P Q$ generated by elements of the form $(q,-q)$ for $q\in
  Q^{\rm gp}$. Then $Q'\subset (Q\oplus_P Q)^*$, and $(Q\oplus_P Q)/Q'\cong Q$. The injectivity follows from this.
\end{proof}
\begin{prop}\label{6.10}
  The Galois and winding ecd-structures are bounded with respect to the standard density structure.
\end{prop}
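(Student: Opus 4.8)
The plan is to verify the reducing condition of (\ref{5.7}) directly for each square, exploiting the fact that both a Galois and a winding distinguished square have $X'=S'=\emptyset$. For such a square
\[\begin{tikzcd}
  \emptyset\arrow[r]\arrow[d]&X\arrow[d,"f"]\\
  \emptyset\arrow[r]&S
\end{tikzcd}\]
the data $S_0'\in D_{i+1}(S')$ and $X_0'\in D_i(X')$ appearing in (\ref{5.7}) are forced to be $\emptyset\rightarrow\emptyset$, so the reducing condition collapses to the following: for every $i\geq 0$ and every $X_0\in D_{i+1}(X)$, one must produce a distinguished square $C_1$ of the same type and group $G$, together with a morphism $(G,C_1)\rightarrow (G,C)$, such that $S_1\in D_{i+1}(S)$ and $X_1\rightarrow X$ factors through $X_0$. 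I would show that each square is already reducing, so that the identity morphism furnishes the refinement demanded by (\ref{5.4}).

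First I would observe that in both cases the underlying morphism $\underline{f}:\underline{X}\rightarrow\underline{S}$ is finite, hence of finite type with fibers of dimension zero. For a Galois cover this is immediate. For a winding cover, $\underline{f}$ is a base change of the underlying morphism of ${\bf A}_Q\times {\rm Spec}\,{\bf Z}[\mu_n]\rightarrow {\bf A}_Q\stackrel{{\bf A}_\theta}\rightarrow {\bf A}_P$, where the projection from ${\bf A}_Q\times {\rm Spec}\,{\bf Z}[\mu_n]$ is finite and ${\bf A}_\theta$ is finite because $\theta$ is Kummer, so that ${\bf Z}[Q]$ is integral and finitely generated over ${\bf Z}[P]$. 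I would then apply (\ref{4.25}(3)) to $\underline{f}$ with the open subscheme $U=\underline{S}$, for which $\underline{f}^{-1}(U)=\underline{X}$ is dense; using (\ref{6.3}) to identify density data on log schemes with that on their underlying schemes, this produces, for the given $X_0\in D_{i+1}(X)$, an $S_1\in D_{i+1}(S)$ with $f^{-1}(S_1)\subset X_0$.

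Next I would set $X_1=f^{-1}(S_1)$, with the log structure restricted from $X$, and $X_1'=S_1'=\emptyset$. Because $G$ acts on $X$ over $S$, the open $f^{-1}(S_1)$ is $G$-stable, and the restriction $f_1:X_1\rightarrow S_1$ of $f$ is again a Galois (resp.\ winding) cover of group $G$, both classes being stable under base change along the open immersion $S_1\hookrightarrow S$. The open immersions $S_1\hookrightarrow S$ and $f^{-1}(S_1)\hookrightarrow X$ then assemble into a $G$-equivariant morphism $(G,C_1)\rightarrow (G,C)$; by construction $S_1\in D_{i+1}(S)$, while $X_1\hookrightarrow X$ factors through $X_0$. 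This is precisely the reducing condition. Finally, ${\rm dim}_DS={\rm dim}\,\underline{S}$ is finite since $\mathscr{S}$ consists of finite-dimensional schemes, so both ecd-structures are bounded.

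The main obstacle will be the stability claim that $f^{-1}(S_1)\rightarrow S_1$ is still a cover of the prescribed type with the same group $G$. In the winding case this amounts to checking that restriction to the open $S_1$ preserves the identification of $G$ with the Galois group of ${\bf A}_Q\times {\rm Spec}\,{\bf Q}[\mu_n]$ over ${\bf A}_P\times {\rm Spec}\,{\bf Q}$; since the winding datum is pulled back from the fixed diagram over ${\bf A}_P$, this reduces to the compatibility of that pullback with further base change. It is routine, but it is the only point at which the explicit definition in (\ref{6.1}) is invoked rather than the purely dimension-theoretic input of (\ref{4.25}(3)).
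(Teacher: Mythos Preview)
Your argument is correct and is essentially the content behind the paper's one-line citation to \cite[2.9]{Voe10b}. That lemma in Voevodsky handles exactly the situation of a distinguished square with $X'=S'=\emptyset$ and a covering morphism whose underlying map has zero-dimensional fibers over a dense open: one applies what is recorded here as (\ref{4.25}(3)) to produce $S_1\in D_{i+1}(S)$ with $f^{-1}(S_1)\subset X_0$, and then pulls back. Your write-up simply unpacks this in the fs log setting, using (\ref{6.3}) to reduce the density question to underlying schemes, and adds the explicit verification (which the paper leaves implicit in the citation) that the base change $f^{-1}(S_1)\rightarrow S_1$ is again a Galois, respectively winding, cover of the same group $G$. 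So the route is the same; you have just made the steps visible.
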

\begin{proof}
  If follows from \cite[2.9]{Voe10b}.
\end{proof}
\begin{thm}\label{6.11}
  The union of any combintation of the additive, strict closed, Zariski, strict Nisnevich, quasi-piercing, additive
  + Galois, and strict closed + winding ecd-structures is complete, regular, and bounded with respect to the standard density
  structure.
\end{thm}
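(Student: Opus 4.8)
The plan is to mirror the proof of Theorem \ref{4.17} in the scheme setting. I would split the assertion into its three properties---completeness, regularity, and boundedness with respect to the standard density structure---and reduce each to Proposition \ref{1.6}, which transmits any one of these properties from two ecd-structures to their union, together with the constituent-wise results of \ref{6.4}--\ref{6.10}. Since Proposition \ref{1.6} closes an arbitrary union under each property, it is enough to check that each of the seven ``atoms'' in the list---additive, strict closed, Zariski, strict Nisnevich, quasi-piercing, additive + Galois, and strict closed + winding---is separately complete, regular, and bounded.

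For completeness and boundedness the verification is routine. The additive, strict closed, Zariski, and strict Nisnevich ecd-structures are complete and bounded by \ref{6.4}; the quasi-piercing ecd-structure is complete by \ref{6.5} and bounded by \ref{6.7}; and the Galois and winding ecd-structures are complete by \ref{6.5} and bounded by \ref{6.10}. A first application of Proposition \ref{1.6} to the pairs additive + Galois and strict closed + winding then shows that these two composite atoms are complete and bounded as well, so every atom enjoys both properties.

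Regularity is the delicate part, and I expect it to be the main obstacle, because the regularity condition (\ref{5.4.1}) is not a square-by-square condition: it is phrased in terms of the sheaves $\rho_{t_P}$ for the topology generated by the \emph{entire} ecd-structure. The additive, strict closed, Zariski, and strict Nisnevich atoms are regular by \ref{6.4}, and quasi-piercing is regular by \ref{6.6}. The Galois and winding pieces occur in the list only bundled with additive and with strict closed, respectively, and this bundling is exactly what furnishes the topology $t_P$ with enough covers to apply the criterion of Proposition \ref{1.8}. The atom strict closed + winding is regular by \ref{6.8}, whose input is the decomposition of $X\times_S X$ into the closed graphs $X_g$. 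For additive + Galois I would repeat the argument of \ref{4.13}: a Galois cover $f:X\rightarrow S$ of group $G$ still satisfies $X\times_S X\cong G\times X$ in the category of fs log schemes, so the right vertical map $G\times X\rightarrow X\times_S X$ of (\ref{1.8.1}) is an isomorphism, the family in condition (iii) of Proposition \ref{1.8} is trivially a $t_P$-cover, and \ref{1.8} yields regularity of this atom.

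With every atom now known to be complete, regular, and bounded, a final application of Proposition \ref{1.6} propagates all three properties to the union of an arbitrary subcollection, which completes the proof. The only verifications of genuine content---beyond this formal bookkeeping---are the regularity of the two composite atoms, and both reduce, via Proposition \ref{1.8}, to an explicit description of the fiber product $X\times_S X$ (as $G\times X$ in the Galois case and via the closed graphs $X_g$ in the winding case).
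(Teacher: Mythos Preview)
Your proposal is correct and follows essentially the same approach as the paper: reduce via Proposition~\ref{1.6} to checking each atom separately, invoking \ref{6.4}--\ref{6.10} for the individual pieces. The only cosmetic difference is that the paper cites \ref{4.17} for the additive + Galois regularity (implicitly carrying over the argument of \ref{4.13} to the log setting), whereas you spell that step out directly; the content is the same.
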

\begin{proof}
  It follows from (\ref{4.17}), (\ref{6.4}), (\ref{6.5}), (\ref{6.6}), (\ref{6.7}), (\ref{6.8}), (\ref{6.10}), and
  (\ref{1.6}).
\end{proof}
\begin{df}\label{6.12}
  The topology on $\mathscr{S}$ generated by the strict \'etale, piercing, and winding topologies is called the
  {\it pw}-topology, and the topology on $\mathscr{S}$ generated by the strict \'etale, quasi-piercing, and winding
  topologies is called {\it qw}-topology.
\end{df}
\begin{thm}\label{6.13}
  Let $\mathscr{T}:\mathscr{S}^{\rm dia}\rightarrow {\rm Tri}$ be a contravariant pseudofunctor satisfying the conditions {\rm
  (i)--(iv)} in {\rm (\ref{3.1})}, and let $K$ be an object of $\mathscr{T}(T)$ satisfying the dividing descent and strict closed descent where $T$ is an object of
  $\mathscr{S}$. Then $K$ satisfies the piercing descent if and only
  if $K$ satisfies the quasi-piercing descent.
\end{thm}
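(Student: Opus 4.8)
The plan is to reformulate descent through the homotopy-Cartesian criterion of (\ref{3.5}), to dispose of the implication ``quasi-piercing $\Rightarrow$ piercing'' directly, and to reduce the converse to the single family of squares (\ref{6.1.2}), which I would then resolve by a log blow-up. For the first implication, note that by (\ref{6.1})(8) every piercing distinguished square is a quasi-piercing one, so every piercing hypercover is a quasi-piercing hypercover; the descent condition of (\ref{3.3}) for quasi-piercing hypercovers therefore contains that for piercing hypercovers, and quasi-piercing descent implies piercing descent with no use of the extra hypotheses.

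For the converse, assume $K$ satisfies piercing, strict closed and dividing descent. The piercing and strict closed ecd-structures are complete, regular and bounded by (\ref{6.4}), and the quasi-piercing one by (\ref{6.5})--(\ref{6.7}), so (\ref{3.5}) applies to all three. By (\ref{6.1})(8) the quasi-piercing distinguished squares are exactly the strict closed squares, the piercing squares and the pullbacks of (\ref{6.1.2}); applying (\ref{3.5}) to the strict closed and to the piercing ecd-structures identifies the strict closed and piercing descent hypotheses with the assertion that the diagrams of (\ref{3.5})(i) attached to the first two families are homotopy Cartesian. By (\ref{3.5}) for the quasi-piercing ecd-structure it thus remains to treat the squares that are pullbacks of (\ref{6.1.2}); these have trivial group, so after applying $R{\rm Hom}(E,-)$ and the identifications of (\ref{3.2}) the task is to show that
\[\begin{tikzcd}
\Phi_E(K)(S)\arrow[r]\arrow[d]&\Phi_E(K)(S')\arrow[d]\\
\Phi_E(K)(X)\arrow[r]&\Phi_E(K)(X')
\end{tikzcd}\]
is homotopy Cartesian for every $E$, where $(f\colon X\to S,\ g\colon S'\to S)$ is (\ref{6.1.2}).

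For the universal square $S={\bf A}_{\bf N}\times_{{\bf A}^1}{\bf A}_{\bf N}$ has characteristic monoid ${\bf N}^2$ at its closed point, and $f$ is the diagonal, given on characteristic monoids by the summation ${\bf N}^2\to{\bf N}$ and dually by the ray $(1,1)$ in ${\rm Hom}({\bf N}^2,{\bf R}_{\geq 0})$. I would take $\pi\colon\tilde S\to S$ to be the log blow-up along the star subdivision of this cone at $(1,1)$; it is a surjective proper log \'etale monomorphism, hence a dividing cover, and since a subdivision pulls back to itself one has $\tilde S\times_S\tilde S\cong\tilde S$, so the \v{C}ech nerve of $\pi$ is constant. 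The functorial isomorphism $\Phi_E(K)(\mathscr{Y},J)\cong R{\rm Hom}(E,g_*g^*K)$ of (\ref{3.2}) together with (\ref{3.3}) shows that the dividing descent of $K$ makes each $\Phi_E(K)$ local for the dividing topology in the sense of (\ref{2.17}); with the constant \v{C}ech nerve this yields $\Phi_E(K)(S)\stackrel{\sim}\longrightarrow\Phi_E(K)(\tilde S)$, and, as dividing covers are stable under base change, likewise at $X,S',X'$. These identify the displayed square with the $\Phi_E(K)$-square of the pullback of (\ref{6.1.2}) along $\pi$. Now the underlying scheme of $\tilde S$ splits into the strict transform $C_1$ of the diagonal, over which the log structure has rank one and $f$ pulls back to the identity of ${\bf A}_{\bf N}$, and an exceptional component $C_2$ carrying the rank-one log structure whose generator maps to $0$; over $C_1$ the pullback of (\ref{6.1.2}) is a strict closed square and over $C_2$ it is (a pullback of) a piercing square, so by the strict closed and piercing hypotheses and (\ref{3.5}) both $\Phi_E(K)$-squares are homotopy Cartesian. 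Assembling them along $C_1\cap C_2$, which is governed by a strict closed square, gives the homotopy-Cartesianness of the square over $\tilde S$, hence of (\ref{6.1.2}); by (\ref{3.5}) this is quasi-piercing descent.

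The technical heart is the decomposition in the previous paragraph, and this is where I expect the real difficulty. One must compute the fs fibre products defining $\pi$ and its pullbacks, together with their log structures, identify the components $C_1,C_2$ and the restricted corners $\tilde X,\tilde S',\tilde X'$, and verify that the restricted squares are genuinely a strict closed square and a piercing square rather than squares of merely the same shape; the non-strictness of the summation ${\bf N}^2\to{\bf N}$ is precisely what forbids treating the diagonal as a strict closed immersion before the blow-up, and it is the subdivision that separates the two log directions so that each component sees only one of them. Once these local models are secured, the passage between $S$ and $\tilde S$ by dividing descent and the gluing along $C_1\cap C_2$ are formal.
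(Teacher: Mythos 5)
Your overall architecture coincides with the paper's: reduce, via the homotopy-Cartesian criterion, to the squares coming from (\ref{6.1.2}); pass to the star subdivision of ${\bf N}^2$ at $(1,1)$ pulled back to $S={\bf A}_{\bf N}\times_{{\bf A}^1}{\bf A}_{\bf N}$ (your $\tilde S$ is exactly the paper's $V$ in (\ref{6.13.3}), and your observation $\tilde S\times_S\tilde S\cong\tilde S$ is how the paper uses dividing descent too); finish with strict closed descent. Your substitution of (\ref{3.5}) applied to the quasi-piercing ecd-structure (legitimate by (\ref{6.4})--(\ref{6.7})) for the paper's appeal to (\ref{3.6}) is harmless, with one caveat: the criterion requires all pullbacks of (\ref{6.1.2}) in $\mathscr{S}/T$, not just the universal square, so you must either run your construction on an arbitrary pullback or note that the universal decomposition pulls back, as the paper does in (\ref{6.13.2}).

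The genuine gap is in the step you yourself flag as the technical heart, and the verification you propose there fails as stated. Compute the pullback of (\ref{6.1.2}) along $\pi\colon\tilde S\to S$. The diagonal $f\colon{\bf A}_{\bf N}\to S$ factors through $\pi$ (its ray is the subdividing ray, i.e.\ a cone of the subdivision; this is the paper's lift $\alpha$), and $\pi$ is a monomorphism, so $X\times_S\tilde S\cong X={\bf A}_{\bf N}$, sitting inside $\tilde S$ as the strict closed immersion onto the strict transform $C_1$; likewise $S'\times_S\tilde S$ is the exceptional curve $C_2\cong{\bf P}^1$ (the paper's $V'$), strictly closed embedded, and $\tilde S=C_1\cup C_2$. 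Hence the pulled-back square is already, \emph{globally}, a strict closed distinguished square with corners $({\rm pt}_{\bf N},{\bf A}_{\bf N},C_2,\tilde S)$ --- this is the paper's upper square in (\ref{6.13.2}) --- and a single application of strict closed descent finishes; piercing descent plays no role in this part of the argument (in both proofs it enters only through the reduction step, to dispose of the piercing squares among the quasi-piercing ones). In particular your description of the restricted squares is wrong where it matters: over $C_2$ the map $f$ pulls back to the inclusion of the point ${\rm pt}_{\bf N}$ into the exceptional ${\bf P}^1$, which is not an isomorphism of underlying schemes, so that restricted square is not a piercing square nor a pullback of one, and the piercing hypothesis cannot be invoked there; moreover $C_2$ carries a rank-two log structure at two points, not rank one. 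What would save your Mayer--Vietoris plan is a fact you did not use: over $C_1$, over $C_2$, and over $C_1\cap C_2$ one of the two parallel pairs of arrows in the restricted square becomes an isomorphism, so those squares are homotopy Cartesian for trivial reasons, and the assembly along the strict closed cover $\tilde S=C_1\cup C_2$ (applied to all four corners) could then be carried out. But as written --- ``verify that the restricted squares are genuinely a strict closed square and a piercing square'' --- the piercing half of the verification is false, and the proof is incomplete precisely at the point where the paper does the actual work.
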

\begin{proof}
  Since the quasi-piercing topology is finer than the piercing topology, the
  sufficient condition holds. Let us prove the necessary condition. Consider a piercing distinguished square
  \[C=\begin{tikzcd}
    X'\arrow[r,"g'"]\arrow[d,"f'"]&X\arrow[d,"f"]\\
    S'\arrow[r,"g"]&S
  \end{tikzcd}\]
  in $\mathscr{S}/T$, and consider the Cartesian diagram
  \[C'=\begin{tikzcd}
    X'\arrow[d,"u'"]\arrow[r,"v'"]&X\arrow[d,"u"]\\
    X'\times_{S'}X'\arrow[r,"v"]&X\times_S X
  \end{tikzcd}\]
  in $\mathscr{S}/T$ where $u$ is the diagonal morphism and $v=g'\times_g g'$. Let $q:X\times_S X\rightarrow T$ be the structural morphism, and we put $w=uv'$. Then by (\ref{3.6}), it suffices to show that the commutative diagram
      \begin{equation}\label{6.13.1}\begin{tikzcd}
      q_*q^*K\arrow[r,"ad"]\arrow[d,"ad"]&q_*v_*v^*q^*K\arrow[d,"ad"]\\
      q_*u_*u^*q^*K\arrow[r,"ad"]&q_*w_*w^*q^*K^G
    \end{tikzcd}\end{equation}
  in $\mathscr{T}(T)$ is homotopypp Cartesian.

  The diagram $C'$ is a pullback of (\ref{6.1.2}), which has the decomposition
  \begin{equation}\label{6.13.3}\begin{tikzcd}
    {\rm pt}_{\bf N}\arrow[r]\arrow[d]&{\bf A}_{\bf N}\arrow[d]\arrow[r,"{\rm id}"]&{\bf A}_{\bf N}\arrow[d,"\alpha"]\\
    V'\arrow[d]\arrow[r]&V\arrow[d]\arrow[r]&W\arrow[d,"\beta"]\\
    {\rm pt}_{{\bf N}^2}\arrow[r,"t"]&{\bf A}_{\bf N}\times_{{\bf A}^1}{\bf A}_{\bf N}\arrow[r,"t'"]&{\bf A}_{\bf N}\times {\bf A}_{\bf N}
  \end{tikzcd}\end{equation}]
  where
  \begin{enumerate}[(i)]
    \item each square is Cartesian,
    \item $t$ is the zero section,
    \item $t'$ is the morphism induced by the morphism ${\bf A}^1\rightarrow {\bf Z}$,
    \item $W$ is the fs log scheme that is the gluing of
        \[{\bf A}_{{\bf N}x\oplus {\bf N}(x^{-1}y)},\quad {\bf A}_{{\bf N}y\oplus {\bf N}(y^{-1}x)}\]
        along ${\bf A}_{{\bf N}x\oplus {\bf Z}(x^{-1}y)}$.
    \item $\alpha$ is the morphism ${\bf A}_{\bf N}\rightarrow V$ given by the homomorphisms
    \[{\bf N}x\oplus {\bf N}(x^{-1}y)\rightarrow {\bf N},\quad {\bf N}y\oplus {\bf N}(y^{-1}x)\rightarrow {\bf N}\]
    mapping $x$ to $1$ and $y$ to $1$,
    \item $\beta$ is the proper log \'etale monomorphism $V\rightarrow {\bf A}_{{\bf N}x\oplus {\bf N}y}$ given by the homomorphism
        \[{\bf N}x\oplus {\bf N}y\rightarrow {\bf N}x\oplus {\bf N}(x^{-1}y),\quad {\bf N}x\oplus {\bf N}y\rightarrow {\bf N}y\oplus {\bf N}(y^{-1}x)\]
        mapping $x$ to $x$ and $y$ to $y$.
  \end{enumerate}
  Then $C'$ has the decomposition
  \begin{equation}\label{6.13.2}\begin{tikzcd}
    X'\arrow[r,"v'"]\arrow[d,"b'"]&X\arrow[d,"b"]\\
    Y'\arrow[r,"v''"]\arrow[d,"a'"]&Y\arrow[d,"a"]\\
    X'\times_{S'}X'\arrow[r,"v"]&X\times_S X
  \end{tikzcd}\end{equation}
  that is the pullback of the left part of (\ref{6.13.3}). The upper square is a strict closed distinguished square and $a$ and $a'$ are dividing covers. Since
  $K$ satisfies the dividing descent, the adjunctions
  \[{\rm id}\stackrel{ad}\longrightarrow a_*a^*,\quad {\rm id}\stackrel{ad}\longrightarrow a_*'a'^*\]
  are isomorphisms. Thus to show that (\ref{6.13.1}) is homotopy Cartesian, it suffices to show that the diagram
  \[\begin{tikzcd}
    (qa)_*(qa)^*K\arrow[r,"ad"]\arrow[d,"ad"]&(qa)_*v_*''v''^*(qa)^*K\arrow[d,"ad"]\\
    (qa)_*b_*b^*(qa)^*K\arrow[r,"ad"]&(qa)_*w_*'w'^*(qa)^*K
  \end{tikzcd}\]
  in $\mathscr{T}(T)$ is homotopy Cartesian where $w'=bv'$. It follows from applying (\ref{3.5}) to strict closed ecd-structure.
\end{proof}
\begin{thm}\label{6.14}
  Let $\mathscr{T}:\mathscr{S}^{\rm dia}\rightarrow {\rm Tri}$ be a contravariant pseudofunctor satisfying the conditions {\rm
  (i)--(iv)} in {\rm (\ref{3.1})}, and let $K$ be an object of $\mathscr{T}(T)$ satisfying the dividing descent and strict closed descent where $T$ is an object of
  $\mathscr{S}$. Then $K$ satisfies the pw-descent if and only
  if $K$ satisfies the qw-descent.
\end{thm}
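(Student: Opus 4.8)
The plan is to deduce Theorem~\ref{6.14} from Theorem~\ref{6.13} by isolating the only genuine difference between the two topologies. By (\ref{6.12}) the pw-topology is generated by the strict \'etale, piercing, and winding topologies, while the qw-topology is generated by the strict \'etale, quasi-piercing, and winding topologies. Thus the two are built from the same pieces except that the piercing topology is replaced by the quasi-piercing topology, and the whole point of Theorem~\ref{6.13} is that, under the standing hypotheses on $K$, these two give the same descent condition.

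I would first dispose of the implication qw-descent $\Rightarrow$ pw-descent, which needs no hypothesis on $K$. By (\ref{6.1}) the quasi-piercing ecd-structure contains every piercing distinguished square, so the quasi-piercing topology refines the piercing topology and hence the qw-topology refines the pw-topology. Consequently every pw-hypercover is in particular a qw-hypercover, and qw-descent in the sense of (\ref{3.3}) specializes to pw-descent.

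For the converse, assume $K$ satisfies pw-descent. Since the piercing, strict \'etale, and winding topologies are each coarser than the pw-topology, $K$ satisfies piercing descent, strict \'etale descent, and winding descent. The standing hypotheses are precisely those of Theorem~\ref{6.13}, namely that $K$ satisfies dividing descent and strict closed descent; applying it promotes piercing descent to quasi-piercing descent. Therefore $K$ satisfies strict \'etale, quasi-piercing, and winding descent simultaneously, and it remains only to reassemble these three into qw-descent.

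This reassembly is the step I expect to be the main obstacle: the principle that if a topology $t$ is generated by topologies $t_1,\dots,t_r$, then an object satisfying $t_i$-descent for every $i$ satisfies $t$-descent. For $\mathscr{T}={\rm D}({\rm PSh}(-,{\bf Q}))$ I would argue through the notion of $t$-locality in (\ref{2.12}): a presheaf is a $t$-sheaf if and only if it is a $t_i$-sheaf for every $i$, because the covering sieves of $t=t_1\vee\cdots\vee t_r$ are generated by those of the $t_i$; passing to the derived category this says that $K$ is $t$-local if and only if it is $t_i$-local for every $i$, which by (\ref{3.8}) is exactly the statement about descent. For a general pseudofunctor $\mathscr{T}$ satisfying (i)--(iv) of (\ref{3.1}) I would reduce to this presheaf case as in the proof of Theorem~\ref{3.5}: by (\ref{3.2}), $K$ satisfies $t$-descent if and only if $\Phi_E(K)\in{\rm D}({\rm PSh}(\mathscr{S}/T,{\bf Q}))$ is $t$-local for every object $E$, so the presheaf-level equivalence applied to each $\Phi_E(K)$ yields the statement for $K$. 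Taking $t$ to be the qw-topology then assembles strict \'etale, quasi-piercing, and winding descent into qw-descent, completing the converse and hence the equivalence.
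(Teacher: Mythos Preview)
Your overall strategy matches the paper's: establish quasi-piercing descent via (\ref{6.13}) and then reassemble the pieces into qw-descent. The gap is in your justification of the reassembly step. You assert that ``a presheaf is a $t$-sheaf if and only if it is a $t_i$-sheaf for every $i$; passing to the derived category this says that $K$ is $t$-local if and only if it is $t_i$-local for every $i$.'' The first clause is true, but the passage to the derived category is not a formality. Being $t$-local in ${\rm D}({\rm PSh}(\mathscr{S},{\bf Q}))$ means that $K\to a_{t*}a_t^*K$ is an isomorphism, which is a hypercohomological condition, not the condition that the cohomology presheaves $H^n(K)$ are $t$-sheaves. For instance, a single $t$-sheaf $F$ placed in degree $0$ is $t$-local only when $H^n_t(U,F)=0$ for every $U$ and every $n>0$; so the equality ${\rm Sh}_t=\bigcap_i{\rm Sh}_{t_i}$ at the abelian level does not by itself identify the $t$-local objects with the intersection of the $t_i$-local objects. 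Your reduction to $\Phi_E(K)$ is fine, but it lands you in exactly this presheaf-level question, which you have not resolved.

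The paper closes this gap by invoking (\ref{3.5}) directly rather than any general principle about joins of topologies. By (\ref{6.11}) the union of the strict Nisnevich, additive~$+$~Galois, quasi-piercing, and strict closed~$+$~winding ecd-structures is bounded, complete, and regular, and its associated topology is the qw-topology; hence (\ref{3.5}) reduces qw-descent to the homotopy Cartesian condition for each of these distinguished squares. Those conditions are available: the strict Nisnevich, Galois, winding, additive, and strict closed cases follow from pw-descent via (\ref{3.5}) applied to the analogous pw ecd-structure, and the quasi-piercing case follows from quasi-piercing descent (supplied by (\ref{6.13})) via (\ref{3.5}) once more. In short, the ``reassembly'' you need is precisely the content of (\ref{3.5}) together with (\ref{6.11}); you should cite those instead of attempting to lift the sheaf-level statement.
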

\begin{proof}
  The sufficient condition holds since the qw-topology is finer than the pw-topology, so let us prove the necessary condition. Assume that $K$ satisfies the pw-descent. Then $K$ satisfies the piercing descent, so by (\ref{6.13}), $K$ satisfies the quasi-piercing descent. Now by (\ref{3.5}) and (\ref{6.12}), $K$ satisfies the qw-descent.
\end{proof}
\titleformat*{\section}{\center \scshape }


\begin{thebibliography}{9}
\small
\bibitem{Ayo07} J. Ayoub, {\it Les six op\'erations de Grothendieck et le formalisme des cycles \'evanescents dans
    le monde motivique}, Ast\'erisque, vol. {\bf 314} and {\bf 315} (2007).
\bibitem{BG73} K. S. Brown and S. M. Gersten, {\it Algebraic K-theory as generalized sheaf cohomology}, Higher
    K-theories I (Proc. Conf., Battelle Memorial Inst., Seattle, Wash., 1972), Lecture Notes in Mathematics, vol.
    {\bf 341}, Springer-Verlag (1973), 266–-292.
\bibitem{CD12} D.-C. Cisinski and F. D\'eglise, {\it Triangulated categories of mixed motives}, preprint,
    arXiv:0912.2110v3, 2012.
\bibitem{DHI} Dugger, D., S. Hollander, and D. Isaksen, {\it Hypercovers and simplicial
presheaves}, Math. Proc. Camb. Phil. Soc. {\bf 136}(1) (2004), 9--51.
\bibitem{Gei06} T. Geisser, {\it Arithmetic cohomology over finite fields and special values of $\zeta$-functions}, Duke Math. J. {\bf 133} (2006), no. 1, 27–-57.
\bibitem{Voe96} V. Voevodsky, {\it Homology of schemes}, Selecta Math. (N.S.) {\bf 2} (1996), no. 1, 111-–153.
\bibitem{Voe10a} V. Voevodsky, {\it Homotopy theory of simplicial sheaves in completely decomposable topologies},
    J. Pure Appl.\ Algebra {\bf 214} (2010), no. 8, 1384--1398.
\bibitem{Voe10b} V. Voevodsky, {\it Unstable motivic homotopy categories in Nisnevich and cdh-topologies}, J. Pure
    Appl. Algebra {\bf 214} (2010), no. 8, 1399--1406.
\end{thebibliography}
\end{document}